\documentclass[11pt]{article}
\setlength{\skip\footins}{0.6cm}

\usepackage[utf8]{inputenc}
\usepackage{lmodern}
\usepackage{subfiles}
\usepackage{enumitem}
	\setenumerate{label={\normalfont(\roman*)}, itemsep=0em} 
        
\usepackage{amsfonts}
\usepackage{amsthm}
\usepackage{amsmath}
\usepackage{amssymb}
\usepackage{amscd}
\usepackage{mathrsfs}
\usepackage{mathtools}
\usepackage{bm}
\usepackage{esint}

\usepackage[margin=3cm]{geometry}
\usepackage{indentfirst}
\usepackage{graphicx}
\usepackage{graphics}
\usepackage{lscape}
\usepackage{tikz-cd}
\usepackage{color}
\usepackage{pict2e}
\usepackage{epic}
\usepackage{epstopdf}
\usepackage{titlesec, titlefoot}
	\titleformat{\section}[block]{\Large\bfseries\filcenter}{\thesection}{1em}{}
\usepackage{commath}
\usepackage{float}
\usepackage{caption}
\usepackage{etoolbox}
\usepackage[affil-it]{authblk}
\usepackage{combelow}

\usepackage[hidelinks]{hyperref}
\hypersetup{bookmarksopen=true} 
\usepackage{hypcap}
\usepackage{schemata}

\graphicspath{{./Pictures/}}
\allowdisplaybreaks


\usepackage{listings}
\lstset{
	language=Mathematica,
	basicstyle=\small\sffamily,
	numberstyle=\tiny,
	columns=fullflexible,
	showstringspaces=false
}

\usepackage{fancybox}
\makeatletter
\makeatother



\theoremstyle{plain}

\renewcommand*\thesection{\arabic{section}}
\numberwithin{equation}{section} 

\theoremstyle{plain}
\newtheorem{thm}{Theorem}

\newtheorem{lemma}[thm]{Lemma}

\newtheorem{cor}{Corollary}
\numberwithin{thm}{section} 

\newtheorem{theorem}{Theorem}
\newtheorem{proposition}{Proposition}

\theoremstyle{definition}

\newtheorem{remark}[thm]{Remark}

\newtheorem{definition}[thm]{Definition}

\newcommand{\thistheoremname}{}
\newtheorem{genericthm}[equation]{\thistheoremname}

\newcommand{\thistheoremnames}{}
\newtheorem*{genericthms}{\thistheoremnames}
\newenvironment{para*}[1]
  {\renewcommand{\thistheoremnames}{#1}%
   \begin{genericthms}}
  {\end{genericthms}}

\expandafter\let\expandafter\oldproof\csname\string\proof\endcsname
\let\oldendproof\endproof
\renewenvironment{proof}[1][\proofname]{%
  \oldproof[\upshape \bfseries #1]%
}{\oldendproof}

\makeatletter
\def\@makechapterhead#1{%
  \vspace*{50\p@}%
  {\parindent \z@ \raggedright \normalfont
    \interlinepenalty\@M
    \Huge\bfseries  \thechapter.\quad #1\par\nobreak
    \vskip 40\p@
  }}
\makeatother

\newcommand{\reqnomode}{\tagsleft@false}
\vfuzz3pt 
\hfuzz2pt 
\textwidth = 14.7 cm
\textheight = 22.4 cm 
\oddsidemargin = 0.6cm
\evensidemargin = 1cm 
\topmargin = 1mm
\headheight = 0.2 cm
\headsep = 0.7 cm

\def \d{\,{\rm d}}

\def\supp{\,{\rm supp }}

\allowdisplaybreaks
\makeatletter
\DeclareRobustCommand*{\bfseries}{%
  \not@math@alphabet\bfseries\mathbf
  \fontseries\bfdefault\selectfont
  \boldmath
}

\makeatother

\newlength{\defbaselineskip}
\setlength{\defbaselineskip}{\baselineskip}


\newcommand{\N}{\mathbb{N}}
\newcommand\eps\varepsilon
\def\mean#1{\mathchoice%
          {\mathop{\kern 0.2em\vrule width 0.6em height 0.69678ex depth -0.58065ex
                  \kern -0.8em \intop}\nolimits_{\kern -0.4em#1}}%
          {\mathop{\kern 0.1em\vrule width 0.5em height 0.69678ex depth -0.60387ex
                  \kern -0.6em \intop}\nolimits_{#1}}%
          {\mathop{\kern 0.1em\vrule width 0.5em height 0.69678ex
              depth -0.60387ex
                  \kern -0.6em \intop}\nolimits_{#1}}%
          {\mathop{\kern 0.1em\vrule width 0.5em height 0.69678ex depth -0.60387ex
                  \kern -0.6em \intop}\nolimits_{#1}}}

\numberwithin{equation}{section}
\setlength{\delimitershortfall}{-0.1pt}
\allowdisplaybreaks[4]



\def\eqn#1$$#2$${\begin{equation}\label#1#2\end{equation}}
\delimitershortfall=-0.1pt
\newcommand\R{\mathbb{R}}

\newcommand{\F}{\mathscr F}
\newcommand{\G}{\mathscr G}

\def \tp{\textup}
\def \p{\partial}
\def \e{\varepsilon}
\def \D{\mathrm{D}}
\def \S{\mathbb S}

\newcommand\restr[2]{{
  \left.\kern-\nulldelimiterspace 
  #1 
  \vphantom{|} 
  \right|_{#2} 
  }}

\title{Global higher integrability for minimisers of convex obstacle problems with (\MakeLowercase{p},\MakeLowercase{q})-growth}
\author{Lukas Koch}

\affil[1]{\small University of Oxford, Andrew Wiles Building Woodstock Rd, Oxford OX2 6GG, United Kingdom 
\protect \\
  {\tt{kochl@maths.ox.ac.uk}}
  \vspace{1em} \ }

\usepackage{etoolbox}
\makeatletter
\patchcmd{\@adjustvertspacing}
  {\jot\baselineskip \divide\jot 4}
  {\jot=.5\baselineskip}
  {}{}
\makeatother

\makeatother
\begin{document}
\maketitle
\begin{abstract}
We prove global $W^{1,q}(\Omega,\R^N)$-regularity for minimisers of $\F(u)=\int_\Omega F(x,Du)\d x$ satisfying $u\geq \psi$ for a given Sobolev obstacle $\psi$. $W^{1,q}(\Omega,\R^m)$ regularity is also proven for minimisers of the associated relaxed functional. Our main assumptions on $F(x,z)$ are a uniform $\alpha$-H\"older continuity assumption in $x$ and natural $(p,q)$-growth conditions in $z$ with $q<\frac{(n+\alpha)p}{n}$. In the autonomous case $F\equiv F(z)$ we can improve the gap to $q<\frac{np}{n-1}$, a result new even in the unconstrained case.
\end{abstract}

\unmarkedfntext{
\hspace{-0.8cm}
\emph{Acknowledgments:} L.K. was supported by the Engineering and Physical Sciences Research Council [EP/L015811/1].
}

\section{Introduction and results}
We are interested in the following vectorial obstacle problem: given a domain $\Omega\subset\R^n$, boundary datum $g$ and an obstacle $\psi$, consider the obstacle problem
\begin{align}\label{obstacleProblem}
\min_{u\in K^\psi_g(\Omega)} \F(u)\quad \text{ where }\quad \F(u)=\int_\Omega F(x,\D u)\d x. \tag{P}
\end{align}
Here we write
\begin{align*}
K^\psi_g(\Omega) = \{u\in W^{1,p}_g(\Omega,\R^N)\colon u\geq \psi \text{ a.e. in } \Omega\}.
\end{align*}
Here and throughout we understand vector-valued inequalities such as $u\geq \psi$ to be applied row-wise, that is if $u=(u_1,\ldots,u_N)^T$, $\psi=(\psi_1,\ldots,\psi_N)^T$, then $u_i\geq \psi_i$ for ${i=1,\ldots,N}$.
In order to ensure that $K^\psi_g(\Omega)$ is non-empty we assume throughout that $g,\psi$ are Sobolev functions satisfying $g\geq \psi$ on $\p\Omega$ in the sense of traces. We refer to Section \ref{sec:prelim} for unexplained notation.

The integrand $F\equiv F(x,z)\colon \Omega \times \R^{N\times n}\to \R$ is convex  and satisfies $(p,q)$-growth in $z$ as well as a natural uniform $\alpha$-H\"older condition in $x$. We stress that we work in the multi-dimensional vectorial case $n>1$, $N\geq 1$. We prove global higher integrability properties of the minimiser as well as global higher integrability properties of minimisers of a relaxed functional related to \eqref{obstacleProblem}. We also study the autonomous case $F\equiv F(z)$ where we obtain stronger results which are in fact new even in the unconstrained case where no obstacle is present.

In order to state our results precisely and to compare them with the literature on (constrained) functionals with $(p,q)$-growth we list our assumptions.

We suppose throughout that $F\equiv F(x,z)\colon \Omega\times \R^{N\times n}\to \R$ is measurable in $x$ and continuously differentiable in $z$. We further introduce the following assumptions, assumed to hold for all $z\in \R^{N\times n}$ and almost every $x,y\in\Omega$, which we invoke as required:
\begin{gather}\label{ass:elliptic}
F(x,z)-\lambda (\mu^2+|z|^2)^\frac{p}{2} \text{ is convex in } z \tag{H1}\\[5pt]
\label{ass:growth} F(x,z)\leq \Lambda (1+|z|^q) \tag{H2}\\
\label{def:bounds3} |F(x,z)-F(y,z)|\leq \Lambda |x-y|^\alpha\left(1+ |z|^2\right)^\frac q 2.\tag{H3}
\end{gather}
Here $\mu\geq 0$, $\Lambda,\lambda>0$ and $\alpha\in(0,1]$. Moreover, we assume that $2\leq p<q<\infty$. If $p<n$, we further suppose $q\leq \frac{np}{n-p}$.

We remark that \eqref{ass:elliptic}, \eqref{ass:growth} are usually referred to as natural growth conditions.
Under these assumptions $F$ is convex in $z$ and (after adding a constant to $F$ if necessary) the following bounds apply for almost every $x\in\Omega$ and every $z,w\in \R^{N\times n}$:
\begin{gather}
\label{def:bounds1}(\mu^2+|z|^2+|w|^2)^\frac{p-2}{2}\lesssim\frac{F(x,z)-F(x,w)-\langle \partial_z F(x,w),z-w\rangle}{|z-w|^2}\tag{H4}\\[5pt]
\label{def:lower}F(x,z)\gtrsim |z|^p-1.\tag{H5}
\end{gather}

Let us also give a precise definition of the notions of minimisers we are interested in.
\begin{definition}
Given $\psi$, $g\in W^{1,q}(\Omega)=W^{1,q}(\Omega,\R^N)$, we say that
$u\in K^\psi_g(\Omega)$ is a \textbf{(pointwise) minimiser} of \eqref{obstacleProblem} if it holds that ${F(x,Du)\in L^1(\Omega)}$ and
\begin{align*}
\int_\Omega F(x,Du)\d x \leq \int_{\Omega} F(x,D\phi)\d x
\end{align*}
for all $\phi \in K^\psi_g(\Omega)$. 

Further $u\in K^\psi_g(\Omega)$ is a \textbf{relaxed minimiser} of \eqref{obstacleProblem} if $u$ minimises the relaxed functional
\begin{align*}
\overline \F(u) = \inf\Big\{\liminf_{j\to\infty} \int_\Omega F(x,\D u_j)\d x\colon u_j\in K^{*,\psi}_g(\Omega), u_j\rightharpoonup u \text{ weakly in } W^{1,p}(\Omega)\Big\}
\end{align*}
where
\begin{align*}
K^{*,\psi}_g(\Omega) = \{u\in W^{1,q}_g(\Omega)\colon u\geq \psi \text{ a.e. in } \Omega\},
\end{align*}
that is
\begin{align*}
\overline\F(u)\leq \overline\F(v) \text{ for all } v\in K^\psi_g(\Omega).
\end{align*}
\end{definition} 
\noindent We remark immediately that $K^{*,\psi}_g(\Omega)$ is dense in $K^\psi_g(\Omega)$ (see Corollary \ref{cor:density}) and that by weak lower semicontinuity of $\F(\cdot)$ in $W^{1,q}(\Omega)$, for $u\in K^{*,\psi}_g(\Omega)$, $\overline \F(u)=\F(u)$. Further using the direct method it is not difficult to establish the existence of both types of minimisers.

When no obstacle is present, the study of elliptic systems and functionals when $p=q$ is well established with a long list of important results. For an introduction and references, we refer to \cite{Giaquinta1983} and \cite{Giusti2003}. 

In the unconstrained case, the systematic study of regularity theory for minimisers when $p<q$ started with the seminal papers \cite{Marcellini1989,Marcellini1991}. We refer to \cite{Mingione2006,Mingione2021} for an overview of the theory and further references. 
We only list the, to our knowledge, best available $W^{1,q}_{\tp{loc}}$-regularity results for general autonomous convex functionals with $(p,q)$-growth when $n\geq 2$: under natural growth conditions it suffices to assume $q<\frac{np}{n-1}$ in order to obtain $W^{1,q}_{\tp{loc}}$-regularity of minimisers \cite{Carozza2013}. To obtain the same conclusion under controlled growth conditions, the gap may be widened to $q<p\left(1+\frac 2 {n-1}\right)$, see \cite{Schaeffner2020}, and under controlled duality growth conditions it suffices to take $q<\frac{np}{n-2}$ (if $n=2$ it suffices to take $q<\infty$) \cite{DeFilippis2020}. In all three cases, higher integrability goes hand in hand with a higher differentiability result. In the autonomous case our main result is new even in the unconstrained setting and gives a global analogue of the result in \cite{Carozza2013}. In the non-autonomous case the author in \cite{Koch2020} extended the work of \cite{Esposito2004,Fonseca2004} giving global versions of local $W^{1,q}$-regularity results obtained in \cite{Esposito2004,Fonseca2004} for relaxed minimisers of functionals $F(x,z)$, convex and with $(p,q)$-growth in $z$, while satisfying a uniform $\alpha$-H\"older condition in $x$ under the sharp assumption $q<\frac{(n+\alpha)p}{n}$. Under the additional assumption that

\begin{align}\label{def:changeOfXAlt}
\left\{\begin{array}{l} \textit{there is } \e_0>0 \textit{ such that for any } \e\in(0,\e_0) \text{ and } x\in\Omega  \textit{ there is } \\[5pt]
\hat y\in \overline{B_\e(x)\cap\Omega} \textit{ such that } F(\hat y,z)\leq F(y,z)
\textit{ for all } y\in \overline{B_\e(x)\cap\Omega}, z\in \R^{n\times m},
\end{array}\right.\tag{H6}
\end{align}
\noindent
first used in \cite{Esposito2019},
the results extend to pointwise minimisers. We note that
\eqref{def:changeOfXAlt} is very similar to Assumption 2.3 in \cite{Zhikov1995} and that \eqref{def:changeOfXAlt} holds for many of the commonly considered examples, see \cite{Esposito2004,Koch2020}, \cite{Esposito2019}. The $W^{1,q}_{\tp{loc}}$-regularity is in general not known if $q=\frac{(n+\alpha)p}{n}$. An exception are functionals modelled on the double-phase functional \cite{Baroni2018}. See also \cite{DeFilippis2019}.

Recently, constrained problems with $(p,q)$-growth have gained interest. Obstacle problems have a long history in their own right and we refer to \cite{Heinonen1993} for an introduction and further references with regards to the theory in the case of $p$-growth. When considering linear elliptic problems, the regularity of the solution agrees with the regularity of the obstacle \cite{Brezis1973,Caffarelli1980,Kinderlehrer1980}. In the non-linear setting, this is not the case and, usually, more regularity has to be assumed on the obstacle to overcome the effect of the non-linearity. One reason for the recent interest in obstacle problems with non-standard growth is that such problems have appeared in the construction of comparison problems for the study of fine properties of the solutions of certain non-linear PDEs with non-standard growth \cite{Chlebicka2020a,Fu2015,Kilpelainen2000,Heinonen1993}.

We remark that at the moment the theory in the vectorial setting and in the case of $(p,q)$-growth in particular is considerably less developed than that in the scalar setting, in particular when comparing to scalar quadratic growth. In the case of the scalar Laplacian, $F(x,z)=|z|^2$, the regularity of the free boundary $\p\{u>\psi\}$ has been studied in great detail \cite{Caffarelli1977,Caffarelli1998,Figalli2019,Figalli2020}. However already when considering the scalar $p$-Laplacian $F(x,z)=|z|^p$, considerably less is known \cite{Andersson2015,Figalli2017}. Finally we note that our constraint ${u\geq \psi}$ is a convex constraint. The case of non-convex constraints is harder, but nevertheless of interest, see e.g. the study of functionals with $(p,q)$-growth constrained to lie on a manifold \cite{DeFilippis2019a,DeFilippis2020d,Chlebicka2020}.

Returning to vectorial functionals with $(p,q)$-growth, until recently most of the focus has been on results concerning functionals with additional structure assumptions. These include improved integrability and differentiability results in the case of variable exponent functionals $\int_\Omega |\D u|^{p(x)}\d x$ \cite{Eleuteri2013,Fu2015,Eleuteri2018,Foralli2021}, as well as the double phase functional $\int_\Omega |\D u|^p+a(x)|\D u|^q\d x$ \cite{Byun2019,Chlebicka2020a,Zhang2021}. Another direction of research has been Cald\'{e}ron--Zygmund estimates for both double phase \cite{Byun2020} and variable exponent \cite{Liang2021} obstacle problems. We mention that improved integrability results are also available in the setting of almost linear growth \cite{Fuchs2000a,Ok2016} as well as certain parabolic settings, see for example \cite{Bogelein2012,Erhardt2016} for results and further references. As is usually observed in problems with non-standard growth, for radial integrands of the form $F(x,z)\equiv F(x,|z|)$, minimisers are better behaved and understood. In this setting, sharp assumptions giving Lipschitz regularity of minimisers can be found in \cite{deFilippis2020b} and integrands with generalised Orlicz-growth are studied in \cite{Karppinen2021}. Regularity results in the scale of Besov spaces in this setting were obtained in \cite{Grimaldi2021}

Obstacle problems for integrands satisfying $(p,q)$-growth, but no further structural assumptions, were first studied in \cite{DeFilippis2021a,Gavioli2019,Gavioli2019a} where local improved integrability and differentiability as well as local higher regularity properties of minimisers were established. The assumptions on the integrand in \cite{DeFilippis2021a} are similar to the ones that we assume in this paper. The results here, particularly regarding the non-autonomous case, may be regarded as global versions of those in \cite{DeFilippis2021a}. We remark, however, that our assumptions on the obstacle $\psi$ are stronger than those in \cite{DeFilippis2021a}. In particular, there it suffices to assume $\psi\in W^{1+\alpha,q}(\Omega)$ in the non-autonomous case whereas we require $\psi\in W^{2,\infty}(\Omega)$. In fact, under the assumption that $\psi\in W^{2,\infty}(\Omega)$, local H\"older-continuity of the minimiser is derived in \cite{DeFilippis2021a}. We do not prove an analogous global result here. Nevertheless, our assumption on the gap $q<\frac{(n+\alpha)p}{n}$ appears already in the unconstrained case and is sharp. We remark that a similar set-up has been studied (locally) in the non-autonomous case but with \eqref{def:bounds3} replaced by a Sobolev-dependence in \cite{Bertazzoni2021}.

The author believes the results in this paper to be the first global results on obstacle problems for vectorial functionals with $(p,q)$-growth but without further structure assumptions.
Our main theorem is the following:
\begin{theorem}\label{thm:main}
Suppose that $\Omega$ is a Lipschitz domain and the $C^2$ integrand $F\equiv F(z)$ satisfies \eqref{ass:elliptic} and \eqref{ass:growth} with  $2\leq p\leq q<\min(\frac{np}{n-1},p+1)$. Let $\psi\in W^{2,\infty}(\Omega)$ and $g\in W^{2,q}(\Omega)$. Then $u\in W^{1,q}(\Omega)$ where $u$ is the relaxed minimiser of \eqref{obstacleProblem}.

Suppose $F\equiv F(x,z)$ is $C^2$ in $z$ and satisfies \eqref{ass:elliptic}, \eqref{ass:growth} and \eqref{def:bounds3} with $2<p\leq q<\frac{(n+\alpha)p}{n}$. Suppose $\psi\in W^{2,\infty}(\Omega)$ and $g\in W^{1+\alpha,q}(\Omega)$. Then
$u\in W^{1,q}(\Omega)$ where $u$ is the relaxed minimiser of \eqref{obstacleProblem}.
\end{theorem}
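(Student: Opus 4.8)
The plan is the standard three-step scheme for $(p,q)$-problems, adapted to the unilateral constraint: regularise \eqref{obstacleProblem} to a problem with genuine $q$-growth, prove a priori $W^{1,q}$-estimates uniform in the regularisation parameter, and pass to the limit, identifying the limit with the relaxed minimiser. For the regularisation, put $F_\sigma(x,z)=F(x,z)+\sigma(\mu^2+|z|^2)^{q/2}$, $\sigma\in(0,1]$; in the non-autonomous case mollify in addition in $x$ on a slightly shrunk domain, letting the mollification parameter tend to $0$ before $\sigma$. Each $F_\sigma$ has standard $q$-growth and inherits \eqref{ass:elliptic}, so by the direct method the obstacle problem $\min\{\int_\Omega F_\sigma(x,Dv)\dx:v\in K^{*,\psi}_g(\Omega)\}$ --- over a non-empty class, e.g.\ $\max(g,\psi)\in K^{*,\psi}_g(\Omega)$ --- admits a minimiser $u_\sigma\in W^{1,q}_g(\Omega)$ with $u_\sigma\geq\psi$. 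Since $F\leq F_\sigma\downarrow F$ pointwise, comparison with a fixed competitor gives $\F(u_\sigma)\leq\F_\sigma(u_\sigma)\leq C$, hence a uniform $W^{1,p}$-bound by \eqref{def:lower}. Granting the estimates below, $u_\sigma\rightharpoonup u$ weakly in $W^{1,q}(\Omega)$ along a subsequence, and combining $u_\sigma\geq\psi$, weak continuity of traces, density of $K^{*,\psi}_g(\Omega)$ in $K^\psi_g(\Omega)$ (Corollary \ref{cor:density}) and $\overline\F=\F$ on $K^{*,\psi}_g(\Omega)$, one checks that $u$ is the relaxed minimiser; so everything reduces to a bound on $\|Du_\sigma\|_{L^q(\Omega)}$ uniform in $\sigma$.

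For the interior estimate, fix $B_{2R}\Subset\Omega$ and $\eta\in C_c^\infty(B_{2R})$. The minimiser satisfies $\int_\Omega\langle\partial_zF_\sigma(x,Du_\sigma),D\varphi-Du_\sigma\rangle\,\dx\geq0$ for all $\varphi\in K^{*,\psi}_g(\Omega)$, and one inserts competitors built from second-order finite differences of $u_\sigma$. Such competitors need not respect $u_\sigma\geq\psi$, and the correction restoring admissibility generates error terms controlled precisely by the second derivatives of $\psi$ --- this is where $\psi\in W^{2,\infty}(\Omega)$ is used. Together with the strong monotonicity \eqref{def:bounds1} this gives, uniformly in $\sigma$, a Caccioppoli-type inequality of the shape
\begin{align*}
\int_{B_R}\eta^2\snr{\tau_h V_p(Du_\sigma)}^2\dx\lesssim |h|^{2\beta}\int_{B_{2R}}(\snr{D\eta}^2+1)\bigl(1+\snr{Du_\sigma}^2\bigr)^{q/2}\dx+|h|^{2\beta}C(\psi,g),
\end{align*}
where $V_p(z)=(\mu^2+|z|^2)^{(p-2)/4}z$ and $\beta=1$ in the autonomous case, whereas in the non-autonomous case \eqref{def:bounds3} forces $\beta=\alpha$. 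Feeding the right-hand side into a critical fractional embedding --- in the autonomous case this places $V_p(Du_\sigma)$, up to every $s<\tfrac12$, in $B^{s}_{2,\infty}\hookrightarrow L^{2n/(n-2s)}$, hence $Du_\sigma\in L^{r}$ for every $r<\tfrac{np}{n-1}$ --- and running the standard self-improving iteration (at each step a Gagliardo--Nirenberg interpolation absorbs the $q$-growth term on the right into the left plus the $L^p$-norm of $Du_\sigma$), one upgrades $Du_\sigma\in L^p_\loc$ to $Du_\sigma\in L^q_\loc$ with a uniform bound. The iteration closes exactly under $q<\tfrac{np}{n-1}$ (resp.\ $q<\tfrac{(n+\alpha)p}{n}$), and in the autonomous case the extra requirement $q<p+1$ is what lets the first step of the iteration run starting merely from $Du_\sigma\in L^p$.

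For the boundary estimate, since $\Omega$ is Lipschitz we cover $\partial\Omega$ by finitely many charts flattening it to a hyperplane. In each chart, after the change of variables and after subtracting $g$ (admissible since $g\in W^{2,q}(\Omega)$, resp.\ $g\in W^{1+\alpha,q}(\Omega)$, with $g\geq\psi$ on $\partial\Omega$), only tangential finite differences are directly available; the obstacle is again handled using $\psi\in W^{2,\infty}(\Omega)$, and the missing normal second derivatives are recovered from the Euler--Lagrange equation on the non-contact set together with $Du_\sigma=D\psi$ a.e.\ on the contact set. This gives the boundary analogue of the interior estimate, and a covering argument produces $\|Du_\sigma\|_{L^q(\Omega)}\leq C$ with $C$ independent of $\sigma$. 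Passing to the limit as above gives $u\in W^{1,q}(\Omega)$.

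The main obstacle is constructing --- in the interior and, more delicately, near the boundary --- finite-difference competitors that respect the constraint $u_\sigma\geq\psi$ yet still carry enough information to close the Caccioppoli inequality, all with constants independent of the regularisation; the hypothesis $\psi\in W^{2,\infty}(\Omega)$ is precisely what controls the error terms this produces. The remainder is a careful but essentially known synthesis of the autonomous higher-integrability technique of \cite{Carozza2013} with the non-autonomous fractional-estimate technique of \cite{Esposito2004,Fonseca2004,Koch2020}.
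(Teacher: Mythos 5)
Your overall architecture (regularise to $q$-growth, uniform a priori estimate, pass to the limit and identify the relaxed minimiser) matches the paper, and your interior plan is plausible: it is essentially the local scheme of \cite{DeFilippis2021a,Gavioli2019} combined with \cite{Carozza2013} for the autonomous gap. The paper handles the constraint differently --- it first proves that $L^1$-penalisation is exact for $\kappa>\kappa_0(\|\psi\|_{W^{2,\infty}})$ (Proposition \ref{prop:exact}) and then works with the unconstrained, smoothly penalised functional $\F_{\e,\delta}$, so that difference-quotient competitors need not be corrected for admissibility at all; your variational-inequality route with corrected competitors is a legitimate alternative in the interior, but the penalisation is precisely what keeps the \emph{global} argument manageable.

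The genuine gap is the boundary estimate. Flattening a merely Lipschitz boundary uses only a bi-Lipschitz chart, so the transformed integrand $\tilde F(y,z)=F(\Phi(y),z\,D\Phi(y))$ acquires an $x$-dependence that is only $L^\infty$: in the autonomous case the transformed problem is no longer autonomous, and in the non-autonomous case \eqref{def:bounds3} is destroyed, so neither regime of the theorem applies after the change of variables. Moreover, recovering the ``missing normal derivatives from the Euler--Lagrange equation'' requires solving a degenerate system for $\partial_{nn}u$ on a set whose geometry you do not control, and no second-order information up to a Lipschitz boundary is available (nor is it claimed in the paper). The paper instead runs the difference-quotient method globally in the spirit of \cite{Savare1998}: one translates $u-g$ (extended by zero) along directions of the uniform exterior cone guaranteed by the Lipschitz condition \eqref{eq:uniformCone}, accepts the loss of half a derivative, and works with Besov seminorms $[\,\cdot\,]_{B^{s,2}_\infty}$, $s<1/2$ (resp.\ $s<\alpha$). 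Your claimed Caccioppoli inequality with $|h|^{2\beta}$, $\beta=1$, is therefore not attainable globally. Finally, because only half a derivative is gained, the naive embedding yields only $q<\tfrac{(n+1)p}{n}$ in the autonomous case; the paper reaches $q<\tfrac{np}{n-1}$ only through the additional Schäffner-type optimisation of the radial cut-off (Lemma \ref{lem:schaffner}) combined with a Fubini-type theorem for Besov norms on spheres (Lemma \ref{lem:FubiniType}), which lets the Sobolev embedding act on $(n-1)$-dimensional spheres. Your appeal to \cite{Carozza2013} covers the interior but supplies no substitute for this mechanism at the boundary, so as written the autonomous global gap $q<\tfrac{np}{n-1}$ is not reached.
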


To the best of our knowledge this is the first global $W^{1,q}$-regularity result valid for a large class of general convex $(p,q)$-growth functionals in the constrained case. The autonomous result is new even in the unconstrained case, and is the global equivalent of results in \cite{Carozza2013}. 

The proof of Theorem \ref{thm:main} is based on the difference quotient method. To apply the difference quotient method globally, we rely on an argument developed in \cite{Savare1998}, which the author also used in the unconstrained case in \cite{Koch2020}. However, the constraint causes additional difficulties, since all competitors need to satisfy the constraint. To overcome this issue we rely on an observation that is well known in the numerics literature. We refer to \cite{Tran2015,Schaeffer2018} for the necessary results in our set-up. For sufficiently large parameter $\kappa$, the $L^1$-penalisation of obstacle problems is exact, that is, minimisers $u$ of
\begin{align*}
W^{1,p}_g(\Omega)\ni v\rightarrow\int_\Omega F(x,\D v)+\kappa(\psi-v)_+\d x
\end{align*}
satisfy $u\geq \psi$ for $\kappa>\kappa_0(\|\psi\|_{W^{2,\infty}(\Omega)})$. 
The improvement in the autonomous case is due to a trick first used in \cite{Bella2020} and used in the context of vector-valued $(p,q)$-growth in \cite{Schaeffner2020}. Optimising the choice of cut-off function in dependence on the minimiser we can employ a Sobolev embedding at a crucial step of the proof on spheres instead of balls. Due to the dimensional dependence of the constants in the embedding this gives a stronger result, allowing to widen the gap from $q<\frac{(n+1)p}{n}$ to $q<\frac{np}{n-1}$.

We remark that Theorem \ref{thm:main} is phrased for relaxed minimisers. This is forced by the possible occurence of the Lavrentiev phenomenon, which describes the possibility that
\begin{align}\label{eq:Lphenomenon}
\inf_{u\in K^\psi_g(\Omega)} \F(u)< \inf_{u\in K^{*,\psi}_g(\Omega)} \F(u).
\end{align}
This phenomenon occurs in the unconstrained case. A first example of such behaviour was given in \cite{Lavrentiev1926}. In the context of $(p,q)$-growth functionals, the theory was further developed in \cite{Zhikov1987,Zhikov1993,Zhikov1995}. The Lavrentiev phenomenon is closely related to properties of the relaxed functional. We adopt the viewpoint and terminology of \cite{Buttazo1992}, and consider a topological space $X$ of weakly differentiable functions with a dense subspace $Y\subset X$. We introduce the following sequentially lower semi-continuous (slsc) envelopes:
\begin{align*}
\overline \F_X = \sup\{\,\mathscr G\colon X\to[0,\infty]: \mathscr G \text{ slsc}, \mathscr G\leq \F \text{ on } X\,\}\\
\overline \F_{Y} = \sup\{\,\mathscr G\colon X\to[0,\infty]: \mathscr G \text{ slsc}, \mathscr G\leq \F \text{ on } Y.\,\}\nonumber
\end{align*}
Then we define the Lavrentiev gap functional for $u\in X$ as
\begin{align*}
\mathscr L(u,X,Y)=\begin{cases}
		\overline \F_{Y}(u)-\overline \F_X(u)  &\text{ if } \overline \F_X(u)<\infty\\
		0 &\text{ else}.
	\end{cases}
\end{align*}
We note that the gap functional is non-negative. 

There is extensive literature on the Lavrientiev phenomenon and gap functional, an overview of which can be found in \cite{Buttazo1995,Foss2001}, as well as further references. The phenomenon is also of interest in non-linear elasticity \cite{Foss2003}. Considering the common choice ${X=W^{1,p}(\Omega)}$ endowed with the weak toplogy and $Y=W^{1,q}_{\tp{loc}}(\Omega)\cap W^{1,p}(\Omega)$, a question related to the Lavrentiev phenomenon is to study measure representations of $\overline \F(\cdot)$. We refer to \cite{Fonseca1997,Acerbi2003} for results and further references in this direction.

In this paper, we always consider the choice $X=K^\psi_g(\Omega)$ endowed with the weak topology inherited from $W^{1,p}(\Omega)$ and $Y=K^{*,\psi}_g(\Omega)$.
  Since $F(x,z)$ is convex, standard methods show that $\overline\F_X(\cdot)=\F(\cdot)$\cite[Chapter 4]{Giusti2003}. Furthermore, $\overline \F_Y(\cdot)=\overline \F(\cdot)$. We also note that if ${\mathscr L(u,X,Y)=0}$ for all $u\in X$, then the Lavrentiev phenomenon cannot occur. Non-occurrence of the Lavrentiev phenomenon allows us to transfer the estimates obtained in Theorem \ref{thm:main} to pointwise minimisers and thus to establish $W^{1,q}$-regularity.
  
In general, it is necessary to assume that $\mathscr L(u,K^\psi_g(\Omega),K^{*,\psi}_g(\Omega))=0$ for minimisers of \eqref{obstacleProblem} in order to replace relaxed minimisers in Theorem \ref{thm:main} with pointwise minimisers. Nevertheless, combining the arguments used in \cite{Koch2020} and \cite{Guerra2021b}, we are able to prove the following result:
  
\begin{proposition}\label{prop:pointwise}
Let $\alpha\in(0,1)$.
Suppose that $\Omega$ is a $C^{1,\alpha}$-domain and the assumptions of Theorem \ref{thm:main} hold for this choice of $\alpha$. In the non-autonomous case, assume additionally that \eqref{def:changeOfXAlt} holds and $q<p+1$.
 Then $u\in W^{1,q}(\Omega)$, where $u$ is the pointwise minimiser of \eqref{obstacleProblem}.
\end{proposition}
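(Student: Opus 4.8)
The plan is to show that under the stated hypotheses the Lavrentiev gap functional $\mathscr L(u,K^\psi_g(\Omega),K^{*,\psi}_g(\Omega))$ vanishes for the pointwise minimiser $u$, which by the discussion preceding the statement immediately upgrades the conclusion of Theorem \ref{thm:main} from relaxed to pointwise minimisers. Concretely, it suffices to produce, for a given $v\in K^\psi_g(\Omega)$ with $\F(v)<\infty$, a sequence $v_j\in K^{*,\psi}_g(\Omega)$ with $v_j\rightharpoonup v$ weakly in $W^{1,p}(\Omega)$ and $\limsup_j \F(v_j)\le \F(v)$; then $\overline\F_Y(v)=\overline\F(v)\le\F(v)=\overline\F_X(v)$, hence equality, so the two infima in \eqref{eq:Lphenomenon} coincide and any pointwise minimiser is also a relaxed minimiser, to which Theorem \ref{thm:main} applies.

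The construction of the recovery sequence is where the hypotheses of the proposition enter. First I would reduce to the case of zero boundary datum and zero obstacle: replacing $v$ by $v-g$ and $\psi$ by $\psi-g$ changes $F(x,z)$ to $F(x,z+\D g(x))$, which — using $g\in W^{1+\alpha,q}$ in the non-autonomous case, respectively $g\in W^{2,q}$ in the autonomous case, together with \eqref{ass:growth} and \eqref{def:bounds3} — still satisfies growth/continuity hypotheses of the same type (this is the standard shift argument, as in \cite{Koch2020}). Since $\psi\in W^{2,\infty}(\Omega)$, after the shift the obstacle is Lipschitz; the $C^{1,\alpha}$-regularity of $\partial\Omega$ then lets one extend and mollify while controlling the boundary layer. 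The core is a two-step approximation: (i) a geometric/dilation step à la \cite{Esposito2004,Koch2020} moving $v$ slightly into $\Omega$ so that it is defined on a neighbourhood of $\overline\Omega$ — here the $C^{1,\alpha}$ hypothesis on $\Omega$ is used to straighten the boundary and the scaling is matched to the H\"older exponent $\alpha$, which is also why we assume $\alpha\in(0,1)$ and, in the non-autonomous case, $q<p+1$; (ii) mollification at scale comparable to the dilation parameter, using \eqref{def:changeOfXAlt} in the non-autonomous case to replace $F(x,\cdot)$ by $F(\hat y,\cdot)$ on small balls so that Jensen's inequality can be applied to the frozen, convex integrand and the error from unfreezing is absorbed by \eqref{def:bounds3}. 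The obstacle constraint is handled by adding a small correction: since $\psi$ is Lipschitz, after dilating and mollifying one may add $C\eta$ for a small constant $\eta\to0$ (or translate by the modulus of continuity of $\psi$ and the mollification error) to restore $v_j\ge\psi$, at the cost of an error that vanishes by continuity of $\F$ along the approximation; this is the point where the $W^{2,\infty}$ assumption on $\psi$, rather than merely $W^{1+\alpha,q}$ as in \cite{DeFilippis2021a}, makes the correction term cheap. One checks $v_j\in W^{1,q}_g(\Omega)$ from the mollification and the $W^{1,q}$ (resp.\ $W^{1+\alpha,q}$) regularity of $g$, and $v_j\rightharpoonup v$ in $W^{1,p}$ from $\F(v_j)$ being bounded (via \eqref{def:lower}) together with $v_j\to v$ in $L^p$.

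The main obstacle is the simultaneous management of three competing requirements in step (ii): the competitors must (a) stay above $\psi$, (b) keep the correct boundary values $g$, and (c) not increase the energy in the limit, all while the integrand has only $\alpha$-H\"older $x$-dependence and $(p,q)$-growth so that mollification does not automatically preserve finiteness of $\F$. The balance is struck by coupling the dilation parameter, the mollification radius, and the size of the obstacle correction to a single small parameter and exploiting that $q<\frac{(n+\alpha)p}{n}$ (resp.\ $q<\frac{np}{n-1}$) — the same gap already forcing Theorem \ref{thm:main} — guarantees the error terms, which scale like a positive power of that parameter times $\|\D v\|_{L^p}^q$-type quantities, tend to zero; the extra hypothesis $q<p+1$ in the non-autonomous case and the $C^{1,\alpha}$ (not merely Lipschitz) regularity of $\Omega$ are exactly what is needed to make the boundary-layer contribution negligible, following the scheme of \cite{Koch2020} combined with the argument of \cite{Guerra2021b} for the obstacle correction. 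Once the gap is shown to vanish, the proof concludes by invoking Theorem \ref{thm:main} for the relaxed minimiser, which by non-occurrence of the Lavrentiev phenomenon agrees in energy with — and hence, by strict convexity coming from \eqref{def:bounds1}, equals — the pointwise minimiser $u$, giving $u\in W^{1,q}(\Omega)$.
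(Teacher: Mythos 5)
Your overall strategy --- construct a recovery sequence in $K^{*,\psi}_g(\Omega)$ by dilation plus mollification plus a small obstacle correction, and thereby reduce to Theorem \ref{thm:main} --- matches the paper's in the autonomous case, where Lemma \ref{lem:nonLavrentiev} does exactly this and the Lavrentiev gap genuinely vanishes. In the non-autonomous case, however, your top-level logic has a gap. You propose to show $\mathscr L(u,K^\psi_g(\Omega),K^{*,\psi}_g(\Omega))=0$ for the original integrand $F$, asserting that the error from the dilation/boundary-straightening step ``is absorbed by \eqref{def:bounds3}''. It is not: the dilation $u^s=u\circ\Psi_s^{-1}$ produces, after changing variables back, an error of the form $\int_\Omega |\Psi_s(y)-y|^\alpha(1+|\D u|^2)^{q/2}\d y$, and since $u$ is only known to lie in $W^{1,p}(\Omega)$ this integral need not even be finite --- this is precisely the Lavrentiev obstruction, and no coupling of parameters rescues it, because $\D u^s$, unlike a mollification, has no improved integrability. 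The paper deliberately does not prove that the gap vanishes here; Lemma \ref{lem:nonlavrentievNonAuton} only produces $u_j\in K^{*,\psi}_g(\Omega)$ together with a sequence of \emph{modified} integrands $F^j$ (pullbacks of $F$ under $\Psi_s$), satisfying \eqref{ass:elliptic}, \eqref{ass:growth}, \eqref{def:bounds3} and \eqref{def:changeOfXAlt} with constants independent of $j$, such that $\int_\Omega F^j(x,\D u_j)\d x\to\F(u)$.

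Because of this, the conclusion cannot simply be ``the pointwise minimiser is a relaxed minimiser, apply Theorem \ref{thm:main}''. The paper's proof instead re-runs the machinery: for each $j$ it applies Propositions \ref{prop:auton} and \ref{prop:nonauton} to the regularised, penalised functionals built from $F^j$, obtaining $W^{1,q}$-bounds on their minimisers $u_{\e,\delta,j}$ that are uniform in $\e$, $\delta$ and $j$ (this is where the uniformity of the constants for $F^j$ is essential); it then extracts a diagonal subsequence converging weakly in $W^{1,q}(\Omega)$ to some $v$ with $v\ge\psi$ (by Proposition \ref{prop:exact}) and $\F(v)\le\lim_{k}\F^{j_k}_{\e_k,\delta_k}(u_{j_k})=\F(u)$, whence $v=u$ by minimality of $u$ and strict convexity. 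Your proposal needs either this second pass through the apriori estimates, or a genuinely new argument controlling the dilation error for maps that are merely $W^{1,p}$, to be complete in the non-autonomous case. (Your treatment of the obstacle correction via $\psi\in W^{2,\infty}(\Omega)$ and $q<p+1$, and of the mollification step via \eqref{def:changeOfXAlt} and Jensen's inequality, is consistent with Lemmas \ref{lem:nonLavrentiev} and \ref{lem:nonlavrentievNonAuton}.)
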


The structure of the paper is as follows. In Section \ref{sec:prelim}, we collect some background results. We present the proof of exactness of $L^1$-penalisation in Section \ref{sec:penalty}, before proving an apriori estimate for a regularised version of the $L^1$-penalised functional in Section \ref{sec:estimate}. This allow us to prove our in Section \ref{sec:mainproof} our main theorem Theorem \ref{thm:main}, as well as Proposition \ref{prop:pointwise}.

\section{Preliminaries}\label{sec:prelim}
\subsection{Notation}
In this section we introduce our notation. 
The set $\Omega$ always denotes a open, bounded domain in $\R^n$. Given a set $\omega\subset\R^n$, $\overline \omega$ denotes its closure. We write $B_r(x)$ for the usual open Euclidean ball of radius $r$ in $\R^n$ and $\S^{n-1}$ for the unit sphere in $\R^n$.
We denote the cone of height $\rho$, aperture $\theta$ and axis in direction $\pmb n$ by $C_\rho(\theta,\pmb n)$, that is, the set
\begin{align*}
C_\rho(\theta,\pmb n)=\{\,h\in \R^n: |h|\leq \rho, h\cdot \pmb n\geq |h|\cos(\theta)\,\}.
\end{align*}
Here $|\cdot |$ denotes the Euclidean norm of a vector in $\R^n$ and likewise the Euclidean norm of a matrix $A\in \R^{n\times n}$. The identity matrix in $\R^{n\times n}$ is denoted $\tp{Id}$.
Given an open set $\Omega$ we denote $\Omega_\lambda=\{\,x\in \Omega: d(x,\partial\Omega)>\lambda\,\}$ and $\lambda \Omega = \{\,\lambda x: x\in\Omega\,\}$, where $d(x,\partial\Omega)=\inf_{y\in \p\Omega} |x-y|$ denotes the distance of $x$ from the boundary of $\Omega$.

If $p\in[1,\infty]$ denote by $p'=\frac{p}{p-1}$ its H\"older conjugate.
The symbols $a \sim b$ and $a\lesssim b$ mean that there exists some constant $C>0$, depending only on $n,N,p,\Omega,\mu,\lambda$ and $\Lambda$, and independent of $a$ and $b$ such that $C^{-1} a \leq b \leq C a$ and $a\leq C b$, respectively. 

Write $V_{\mu,t}(z)=(\mu^2+|z|^2)^\frac{t-2}{4}z$. We recall the useful well-known inequality:
\begin{lemma}\label{lem:Vfunctional}
For every $s>-1$, $\mu\in[0,1]$, $z_1$, $z_2\in\R^N$, with $\mu+|z_1|+|z_2|>0$, we have
\begin{align*}
\int_0^1 (\mu^2+|z_1+\lambda(z_2-z_1)^2)^\frac s 2 \lambda\d \lambda\sim (\mu^2+|z_1|^2+|z_2|^2)^\frac s 2
\end{align*}
with the implicit constants only depending on $s$.
Furthermore,
\begin{align*}
|V_{\mu,t}(z_1)-V_{\mu,t}(z_2)|\sim (\mu^2+|z_1|^2+|z_2|^2)^\frac{t-2}{2}|z_1-z_2|^2.
\end{align*}
The implicit constants depend on $s$ and $N$ only.
\end{lemma}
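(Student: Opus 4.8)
Both inequalities are well known; I only sketch the argument. Write $M^2:=\mu^2+|z_1|^2+|z_2|^2$ and $w_\lambda:=z_1+\lambda(z_2-z_1)$; the case $z_1=z_2$ being trivial, assume $a:=|z_2-z_1|^2>0$.

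For the first assertion one half is immediate: by convexity of $|\cdot|^2$ one has $\mu^2+|w_\lambda|^2\le\mu^2+(1-\lambda)|z_1|^2+\lambda|z_2|^2\le M^2$ for all $\lambda\in[0,1]$, so integrating against $\lambda\d\lambda$ gives $\int_0^1(\mu^2+|w_\lambda|^2)^{s/2}\lambda\d\lambda\lesssim M^s$ when $s\ge0$ and $\gtrsim M^s$ when $-1<s<0$. For the reverse inequality the key is to complete the square: if $m\ge0$ denotes the distance of the origin to the line through $z_1$ and $z_2$, then $|w_\lambda|^2=a(\lambda-\lambda^*)^2+m^2$ for the vertex $\lambda^*\in\R$, so evaluating at $\lambda\in\{0,1\}$ and adding gives $M^2=\mu^2+2m^2+\bigl[(\lambda^*)^2+(1-\lambda^*)^2\bigr]a$; hence, with $\delta^2:=\mu^2+m^2$,
\begin{equation*}
M^2\ \sim\ \delta^2+\bigl[(\lambda^*)^2+(1-\lambda^*)^2\bigr]a ,
\end{equation*}
and everything reduces to the one-dimensional bound $\int_0^1(a(\lambda-\lambda^*)^2+\delta^2)^{s/2}\lambda\d\lambda\sim(\delta^2+[(\lambda^*)^2+(1-\lambda^*)^2]a)^{s/2}$. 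Here the case $s\ge0$ just requires exhibiting a subinterval $[c_1,c_2]\subset(0,1]$ of definite length on which $\mu^2+|w_\lambda|^2\gtrsim M^2$; the case $-1<s<0$ with $\lambda^*\notin[0,1]$ is handled by combining the two bounds $\mu^2+|w_\lambda|^2\ge\mu^2+|z_j|^2$ (the nearer endpoint) and $\mu^2+|w_\lambda|^2\ge a\lambda^2$ (resp. $a(1-\lambda)^2$) with the elementary fact that then the farther endpoint satisfies $|z_k|^2\lesssim|z_j|^2+a$. The only delicate case is $-1<s<0$ with $\lambda^*\in[0,1]$, where $\mu^2+|w_\lambda|^2$ can be as small as $\delta^2$ near the vertex: assuming $a>\delta^2$ (otherwise the integrand is $\le\delta^s$ and the estimate is immediate) one splits $[0,1]$ at $|\lambda-\lambda^*|=\delta/\sqrt a$; the near-vertex part is $\lesssim\delta^{s+1}/\sqrt a$ and the rest is $\lesssim a^{s/2}\int_0^1|\lambda-\lambda^*|^s\d\lambda\lesssim a^{s/2}$, where the last integral is finite precisely because $s>-1$, and both are $\lesssim a^{s/2}\sim(\delta^2+a)^{s/2}\sim M^s$. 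This is the one place where I expect real work.

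For the $V$-functional estimate I would differentiate along the segment: with $w_\lambda:=z_2+\lambda(z_1-z_2)$,
\begin{equation*}
V_{\mu,t}(z_1)-V_{\mu,t}(z_2)=\Bigl(\int_0^1\D V_{\mu,t}(w_\lambda)\d\lambda\Bigr)(z_1-z_2),\qquad \D V_{\mu,t}(z)=(\mu^2+|z|^2)^{\frac{t-2}{4}}\Bigl(\tp{Id}+\tfrac{t-2}{2}\tfrac{z\otimes z}{\mu^2+|z|^2}\Bigr).
\end{equation*}
For $t\ge2$ this matrix is symmetric with eigenvalues between $(\mu^2+|z|^2)^{\frac{t-2}{4}}$ and $\tfrac t2(\mu^2+|z|^2)^{\frac{t-2}{4}}$, so $\D V_{\mu,t}(z)\xi\cdot\xi\sim(\mu^2+|z|^2)^{\frac{t-2}{4}}|\xi|^2$. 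Testing the displayed identity against $z_1-z_2$ — using Cauchy--Schwarz for the symmetric positive matrix $\int_0^1\D V_{\mu,t}(w_\lambda)\d\lambda$ to obtain the lower bound, and its operator norm for the upper bound — reduces the claim to $\int_0^1(\mu^2+|w_\lambda|^2)^{\frac{t-2}{4}}\d\lambda\sim(\mu^2+|z_1|^2+|z_2|^2)^{\frac{t-2}{4}}$, that is, the \emph{unweighted} form of the first assertion with $s=\tfrac{t-2}{2}$. This follows from the weighted form by writing $\int_0^1 f\d\lambda=\int_0^1 f\lambda\d\lambda+\int_0^1 f(1-\lambda)\d\lambda$ and noting that, after the substitution $\lambda\mapsto1-\lambda$, the second integral equals the first one with $z_1$ and $z_2$ interchanged. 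Squaring yields the stated two-sided bound.
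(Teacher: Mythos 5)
Your argument is correct, and there is in fact no proof in the paper to compare it with: the lemma is only recalled there as a well-known inequality (of Acerbi--Fusco/Giaquinta--Modica type). Your sketch is precisely the standard route: writing $|z_1+\lambda(z_2-z_1)|^2=a(\lambda-\lambda^*)^2+m^2$ and observing $M^2\sim\delta^2+[(\lambda^*)^2+(1-\lambda^*)^2]a$ reduces the first claim to a one-dimensional quadratic estimate, where your case analysis (in particular the split at $|\lambda-\lambda^*|=\delta/\sqrt a$ when $-1<s<0$ and $\lambda^*\in[0,1]$, which is exactly where the hypothesis $s>-1$ enters) carries the real content and is sound; the second claim then follows from the fundamental theorem of calculus along the segment, the eigenvalue bounds for $\D V_{\mu,t}$ when $t\geq 2$, and the unweighted form of the first estimate obtained by your symmetrisation trick. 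Two small remarks: the implicit constants in the second part also depend on $t$ (harmless, since the paper uses $t=p$), and you have in effect corrected a typo in the statement --- the left-hand side of the second display should read $|V_{\mu,t}(z_1)-V_{\mu,t}(z_2)|^2$, which is the version your argument yields after squaring.
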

We often find it useful to write for a function $v$ defined on $\R^n$ and a vector $h\in \R^n$, $v_h(x)=v(x+h)$.

We fix a family $\{\,\phi_\e\,\}$ of radially symmetric, non-negative mollifiers of unitary mass. We denote convolution with $\phi_\e$ as
\begin{align*}
u\star\phi_\e(x)=\int_{\R^n} u(y)\phi_\e(x-y)\d y.
\end{align*}

\subsection{Function spaces}
\label{sec:besov}
We recall some basic properties of Sobolev and Besov spaces following the exposition in \cite{Savare1998}. The theory can also be found in \cite{Triebel1978}.

For $0\leq \alpha\leq 1$ and $k\in \N$, $C^k(\Omega)$ and $C^{k,\alpha}(\Omega)$ denote the spaces of functions that are $k$-times continuously differentiable in $\Omega$ and $k$-times $\alpha$-H\"older differentiable in $\Omega$, respectively.

For $1\leq p\leq\infty$, $k\in \N$, we let $L^p(\Omega)=L^p(\Omega,\R^N)$ and $W^{k,p}(\Omega)=W^{k,p}(\Omega,\R^N)$ denote the usual Lebesgue and Sobolev spaces, respectively. We write $W^{k,p}_0(\Omega)$ for the closure of $C_0^\infty(\Omega)$-functions with respect to the $W^{k,p}$-norm. For $g\in W^{k,p}(\Omega)$, we write ${W^{k,p}_g(\Omega)= g + W^{k,p}_0(\Omega)}$. We freely identify $W^{k,p}$-functions with their precise representatives. 

 We denote by $[\cdot,\cdot]_{s,q}$ the real interpolation functor. Let $s\in(0,1)$ and $p$, $q\in [1,\infty]$. We define
\begin{gather*}
B^{s,p}_q(\Omega)=B^{s,p}_q(\Omega,\R^N)=[W^{1,p}(\Omega,\R^N),L^p(\Omega,\R^N)]_{s,q}\\[5pt]
B^{1+s,p}_q(\Omega)=[W^{2,p}(\Omega),W^{1,p}(\Omega)]_{s,q}=\{\,v\in W^{1,p}(\Omega): Dv\in B^{s,p}_q(\Omega)\,\}
\end{gather*}
Further, we recall that $W^{1+s,p}(\Omega)=B^{1+s,p}_p(\Omega)$ and that for $1\leq q<\infty$, $B^{s,p}_q(\Omega)$ embeds continuously in $B^{s,p}_\infty(\Omega)$.
We use a characterisation of these spaces in terms of difference quotients as follows:  let $D$ be a set generating $\R^n$, star-shaped with respect to $0$. For $s\in(0,1)$, $p\in[1,\infty]$, consider
\begin{align*}
[v]_{s,p,\Omega}^p := \sup_{h\in D\setminus\{\,0\,\}}\int_{\Omega_h}\left|\frac{v_h(x)-v(x)}{h}\right|^p\d x.
\end{align*}
This semi-norm characterises $B^{s,p}_\infty(\Omega)$ in the sense that
\begin{align*}
v\in B^{s,p}_\infty(\Omega)\Leftrightarrow v\in L^p(\Omega) \text{ and } [v]_{s,p,\Omega}^p<\infty.
\end{align*}
Moreover there are positive constants $C_1,C_2>0$ depending only on $s$, $p$, $D$, $\Omega$ such that
\begin{align}\label{eq:besovcharacterisation}
C_1 \|v\|_{B^{s,p}_\infty(\Omega)}\leq \|v\|_{L^p(\Omega)}+[v]_{s,p,\Omega}\leq C_2\|v\|_{B^{s,p}_\infty(\Omega)}.
\end{align}
If $\Omega=B_r(x_0)$, then the constants $C_1, C_2$ are unchanged by replacing $D$ with $QD$, where $Q$ is an orthonormal matrix. In particular, when $D=C_\rho(\theta,\pmb n)$ is a cone, they are independent of the choice of $\pmb n$. 

Finally, we recall that $B^{s,p}_q(\Omega)$ may be localised. If $\{\,U_i\,\}_{i\leq M}$ is a finite collection of balls covering $\Omega$, then $v\in B^{s,p}_q(\Omega)$ if and only if $v_{|\Omega \cap U_i}\in B^{s,p}_q(\Omega \cap U_i)$ for $i=1,...,M$. Moreover, there are constants $C_3$, $C_4$ such that
\begin{align}\label{eq:besovlocalisation}
C_3 \|v\|_{B^{s,p}_q(\Omega)}\leq \sum_{i=1}^M \|v\|_{B^{s,p}_q(\Omega \cap U_i)}\leq C_4 \|v\|_{B^{s,p}_q(\Omega)}.
\end{align}

We recall the following well-known embedding theorem; see, for example, \cite{Triebel2002}.
\begin{theorem}\label{thm:embedding}
Suppose that $\Omega$ is a Lipschitz domain. Let $0< s\leq 1$ and $p$, $p_1\in [1,\infty]$.
Assume that $s-\frac n p =-\frac n {p_1}$ and $v\in B^{s,p}_\infty(\Omega)$. Then, for any $\e\in(0,1-p_1]$, we have
\begin{align*}
\|v\|_{L^{p_1-\e}(\Omega)}\lesssim\|v\|_{B^{s,p}_\infty(\Omega)}.
\end{align*}
\end{theorem}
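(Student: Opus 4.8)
The plan is to reduce the statement to the known embedding on $\R^n$ by a localisation–extension argument, exploiting the difference-quotient characterisation of $B^{s,p}_\infty$ and the fact that for a Lipschitz domain one has a bounded extension operator from $B^{s,p}_\infty(\Omega)$ to $B^{s,p}_\infty(\R^n)$ (equivalently, to the Nikolskii space $N^{s,p}(\R^n)$). Concretely: first, using \eqref{eq:besovlocalisation}, cover $\Omega$ by finitely many balls $U_i$ such that $\Omega\cap U_i$ is, after a rigid motion, the subgraph of a Lipschitz function, so that it suffices to prove the estimate on each $\Omega\cap U_i$. On such a piece, extend $v$ to a function $\tilde v\in B^{s,p}_\infty(\R^n)$ with $\|\tilde v\|_{B^{s,p}_\infty(\R^n)}\lesssim \|v\|_{B^{s,p}_\infty(\Omega\cap U_i)}$; the existence of such an extension is classical for Lipschitz domains (Stein-type extension, adapted to Besov–Nikolskii scales, see \cite{Triebel1978}).

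Second, prove the embedding on the whole space. Since $B^{s,p}_\infty(\R^n)=N^{s,p}(\R^n)$ is characterised by $\|u\|_{L^p}+\sup_{h\neq 0}|h|^{-s}\|u(\cdot+h)-u\|_{L^p}<\infty$, and $B^{s,p}_q(\R^n)\hookrightarrow B^{s,p}_\infty(\R^n)$ for $q<\infty$, the claim follows from the standard Besov embedding $B^{s,p}_q(\R^n)\hookrightarrow B^{0,p_1}_q(\R^n)\hookrightarrow L^{p_1}(\R^n)$ valid when $s-\tfrac np=-\tfrac n{p_1}$ and $q\le p_1$; the loss of $\e$ in the target exponent is exactly what converts the borderline index-$\infty$ source space into a usable statement, because $B^{s,p}_\infty(\R^n)\hookrightarrow B^{s-\delta,p}_{p}(\R^n)$ for every $\delta>0$ (a cheap consequence of the elementary inclusions among Besov spaces with a small loss of smoothness), and choosing $\delta=\delta(\e)$ small gives $B^{s-\delta,p}_p(\R^n)\hookrightarrow L^{p_1-\e}(\R^n)$ by the sharp Sobolev–Besov embedding. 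Alternatively, one can argue directly with difference quotients: from $[\tilde v]_{s,p,\R^n}<\infty$ and a dyadic/Littlewood–Paley decomposition $\tilde v=\sum_j \Delta_j\tilde v$ one gets $\|\Delta_j\tilde v\|_{L^p}\lesssim 2^{-js}$, hence by Bernstein $\|\Delta_j\tilde v\|_{L^{p_1}}\lesssim 2^{j(n/p-n/p_1)}\|\Delta_j\tilde v\|_{L^p}\lesssim 2^{-j\delta}$ for any $\delta<s-(n/p-n/p_1)=0$... so one must instead interpolate: $\|\Delta_j\tilde v\|_{L^{p_1-\e}}$ sits between $\|\Delta_j\tilde v\|_{L^p}$ and $\|\Delta_j\tilde v\|_{L^{p_1+\e'}}$, and summing $2^{-j\delta}$ over $j$ converges once $\delta>0$, which is arranged by undershooting $p_1$ by $\e$. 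Either route yields $\|\tilde v\|_{L^{p_1-\e}(\R^n)}\lesssim \|\tilde v\|_{B^{s,p}_\infty(\R^n)}$.

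Third, restrict back: $\|v\|_{L^{p_1-\e}(\Omega\cap U_i)}\le \|\tilde v\|_{L^{p_1-\e}(\R^n)}\lesssim \|v\|_{B^{s,p}_\infty(\Omega\cap U_i)}\lesssim \|v\|_{B^{s,p}_\infty(\Omega)}$, and sum over the finitely many $i$ using \eqref{eq:besovlocalisation} and the triangle inequality for the $L^{p_1-\e}$ (quasi-)norm to conclude $\|v\|_{L^{p_1-\e}(\Omega)}\lesssim \|v\|_{B^{s,p}_\infty(\Omega)}$.

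The main obstacle is the endpoint nature of the source space: the sharp Sobolev embedding $B^{s,p}_q\hookrightarrow L^{p_1}$ with $s-n/p=-n/p_1$ genuinely fails for $q=\infty$ (one only lands in weak-$L^{p_1}$ or in a slightly larger space), so the proof must exploit the arbitrarily small room $\e$ to downgrade the third index or the smoothness before embedding; making that trade quantitative and tracking that all constants depend only on $s,p,p_1,\Omega$ (through the extension operator's norm and the covering) is the delicate bookkeeping. Everything else is a routine invocation of well-known facts about Besov spaces on Lipschitz domains, and the details may be found in \cite{Triebel1978, Triebel2002}.
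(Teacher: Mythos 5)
There is no proof in the paper to compare against: Theorem \ref{thm:embedding} is quoted as a classical result with a pointer to \cite{Triebel2002}, so your task was in effect to reconstruct a textbook argument, and your reconstruction is essentially correct. The three-step scheme (localise via \eqref{eq:besovlocalisation}, extend each piece to $\R^n$ by a Stein-type operator — or, more simply, interpolate the extension operator between $L^p(\Omega)\to L^p(\R^n)$ and $W^{1,p}(\Omega)\to W^{1,p}(\R^n)$, which is exactly compatible with the paper's definition of $B^{s,p}_\infty(\Omega)$ as a real interpolation space, making the covering step redundant though harmless — and then embed on $\R^n$ with an $\e$-loss to sidestep the failure of the sharp embedding at third index $\infty$) is the standard route, and you correctly identified that the whole content of the statement is the trade of an arbitrarily small amount of integrability for the endpoint index.

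Two points should be tightened. First, the claim $B^{s-\delta,p}_p(\R^n)\hookrightarrow L^{p_1-\e}(\R^n)$ is not literally what the sharp embedding gives: $B^{s-\delta,p}_p(\R^n)$ embeds into $L^{r(\delta)}(\R^n)$ with $(s-\delta)-\frac np=-\frac n{r(\delta)}$, and on all of $\R^n$ one cannot then pass from $L^{r(\delta)}$ to the smaller exponent $p_1-\e$; you should either choose $\delta$ so that $r(\delta)=p_1-\e$ exactly (checking the third-index condition $p\le p_1-\e$, which holds for small $\e$ since $p_1>p$) or, as your final step implicitly does, apply H\"older on the bounded set $\Omega\cap U_i$ after restricting. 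Second, your Littlewood--Paley alternative contains a garbled intermediate step (the exponent ``$\delta<s-(n/p-n/p_1)=0$'' and the interpolation detour are unnecessary); the clean version is to apply Bernstein directly from $L^p$ to $L^{p_1-\e}$, which yields $\|\Delta_j\tilde v\|_{L^{p_1-\e}}\lesssim 2^{-j(n/(p_1-\e)-n/p_1)}\|\tilde v\|_{B^{s,p}_\infty}$ and a convergent geometric series; note that summing in $L^{p_1-\e}$ uses $p_1-\e\ge 1$, which is precisely what the (evidently mistyped) restriction on $\e$ in the statement — it should read $\e\in(0,p_1-1]$ rather than $(0,1-p_1]$ — is there to guarantee.
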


We have a trace theorem in the following form; see, for example, \cite{Evans1992}.
\begin{lemma}\label{lem:traceTheorem}
Let $\Omega$ be a Lipschitz domain and let $1<p<\infty$. There is a bounded linear operator $\tp{Tr}\colon W^{1+\frac 1 p,p}(\Omega)\to W^{1,p}(\p\Omega)$. Morever, $\tp{Tr}(u)$ may be defined to be the values of the precise representative of $u$ on $\p\Omega$.
\end{lemma}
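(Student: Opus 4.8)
The plan is to treat this as the classical trace theorem: reduce to a flat boundary, prove the trace estimate on a half-space, and then identify the resulting operator with the restriction of the precise representative.

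First I would localise. Since $\p\Omega$ is compact and Lipschitz, cover it by finitely many balls $U_1,\dots,U_M$ in each of which, after an orthogonal change of coordinates, $\Omega$ is the region above the graph of a Lipschitz function $\gamma_i$; adjoin a ball $U_0$ compactly contained in $\Omega$ and take a partition of unity $\{\eta_i\}_{i=0}^M$ subordinate to $\{U_i\}$. For $i\geq 1$ the map $\Phi_i(x',x_n)=(x',x_n-\gamma_i(x'))$ is bi-Lipschitz and flattens $U_i\cap\p\Omega$ onto a piece of $\{x_n=0\}$. Writing $u=\sum_i\eta_iu$ and using the localisation estimate \eqref{eq:besovlocalisation} together with its analogue for $W^{1,p}$ on $\p\Omega$, the task reduces to bounding each $\big((\eta_iu)\circ\Phi_i^{-1}\big)\big|_{\{x_n=0\}}$ in $W^{1,p}(\R^{n-1})$ by $\|u\|_{W^{1+\frac1p,p}(\Omega)}$, hence to the model inequality
\begin{align}\label{eq:modelTrace}
\|v(\cdot,0)\|_{W^{1,p}(\R^{n-1})}\lesssim\|v\|_{W^{1+\frac1p,p}(\R^n_+)}
\end{align}
for $v\in W^{1+\frac1p,p}(\R^n_+)$ supported in a fixed ball, where $\R^n_+=\{x_n>0\}$.

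For \eqref{eq:modelTrace} I would argue on smooth $v$ and pass to the limit by density. Put $w=v(\cdot,0)$. The estimate $\|w\|_{L^p(\R^{n-1})}\lesssim\|v\|_{W^{1,p}(\R^n_+)}$ is the elementary first-order trace inequality: integrate $\p_n\big(\phi(x_n)|v|^p\big)$ in $x_n$ against a cut-off $\phi$ with $\phi(0)=1$, then apply Young's inequality and Fubini. For the derivative, since $p>1$ it suffices to bound $|h'|^{-1}\|w(\cdot+h')-w\|_{L^p(\R^{n-1})}$ uniformly in $h'$. I would average the telescoping identity
\begin{align*}
w(x'+h')-w(x')={}&\frac1{|h'|}\int_0^{|h'|}\Big(\big[v(x'+h',0)-v(x'+h',t)\big]\\
&\qquad+\big[v(x'+h',t)-v(x',t)\big]+\big[v(x',t)-v(x',0)\big]\Big)\d t
\end{align*}
over $t\in(0,|h'|)$: the middle term is $O(|h'|)$ times an average of $|\nabla_{x'}v|$ over a box of side $|h'|$, contributing $\lesssim|h'|\,\|\nabla v\|_{L^p(\R^n_+)}$; each outer term equals $\int_0^t\p_nv(x',\tau)\d\tau$, and splitting $\p_nv=\big(\p_nv-(\p_nv)_Q\big)+(\p_nv)_Q$ with $(\p_nv)_Q$ the mean over the cube $Q$ of side $|h'|$ at $(x',0)$, the mean part again gives $\lesssim|h'|\,\|\nabla v\|_{L^p(\R^n_+)}$ while the oscillation part is bounded, after Hölder in $\tau$, by $|h'|$ times the Gagliardo seminorm $[\nabla v]_{1/p,p,\R^n_+}$, which by \eqref{eq:besovcharacterisation} is $\lesssim\|v\|_{W^{1+\frac1p,p}(\R^n_+)}$. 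Dividing by $|h'|$ yields \eqref{eq:modelTrace}. (Alternatively one may simply quote this as the classical fractional trace theorem, see \cite{Triebel1978}.)

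Finally I would reassemble: transporting \eqref{eq:modelTrace} back through the smooth cut-offs $\eta_i$ and the bi-Lipschitz maps $\Phi_i$, which induce bounded operators on $L^p(\p\Omega)$ and on the first-order difference quotients appearing in the estimate, and summing over $i$, gives a bounded linear $\mathrm{Tr}\colon W^{1+\frac1p,p}(\Omega)\to W^{1,p}(\p\Omega)$. For $u\in C^\infty(\overline\Omega)$ it is the plain restriction; for general $u$ one picks $u_j\in C^\infty(\overline\Omega)$ with $u_j\to u$ in $W^{1+\frac1p,p}(\Omega)\hookrightarrow W^{1,p}(\Omega)$, and, since $p>1$, the precise representative of a $W^{1,p}$-function is defined off a set of vanishing $p$-capacity and hence of vanishing $\mathcal H^{n-1}$-measure on $\p\Omega$, so that along a quasi-everywhere convergent subsequence $\mathrm{Tr}(u)$ is identified with the $\p\Omega$-values of the precise representative of $u$, $\mathcal H^{n-1}$-a.e. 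The step I expect to be the real obstacle is \eqref{eq:modelTrace}: the order $1+\tfrac1p$ is supercritical for the basic trace theorem, so the crude bound using only $\|\nabla v\|_{L^p}$ loses a factor $|h'|^{1-1/p}$ and one must genuinely use the subcritical extra smoothness $\nabla v\in B^{1/p,p}_p(\R^n_+)$; keeping every estimate at differentiation order $\leq 1$ is also what makes the passage through the merely Lipschitz charts $\Phi_i$ legitimate (if one prefers to sidestep this point, it suffices to take $\p\Omega\in C^{1,\alpha}$ as in Proposition \ref{prop:pointwise}).
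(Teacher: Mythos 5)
First, a point of comparison: the paper does not prove this lemma at all --- it is invoked as a known result with a citation --- so your argument has to stand on its own, and it does not close. The failure is exactly at the half-space estimate $\|v(\cdot,0)\|_{W^{1,p}(\R^{n-1})}\lesssim\|v\|_{W^{1+1/p,p}(\R^n_+)}$ that you yourself flagged as the real obstacle. In your telescoping argument the oscillation part is handled correctly, but the two mean contributions are not $\lesssim |h'|\,\|\nabla v\|_{L^p(\R^n_+)}$: what appears there is the $L^p$-norm in $x'$ \emph{alone} of an average of $\nabla v$ over a box of side $|h'|$ adjacent to the boundary, and Jensen plus Fubini only give $\lesssim |h'|^{-1/p}\,\|\nabla v\|_{L^p(\R^{n-1}\times(0,C|h'|))}$, because the averaging in the normal variable has no matching integration on the left-hand side. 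After dividing by $|h'|$ you would need $\int_{\R^{n-1}}\int_0^{\delta}|\nabla v|^p\d x\lesssim \delta$ uniformly in $\delta$, i.e.\ an $L^p(\R^{n-1})$ bound on $\nabla v(\cdot,x_n)$ uniform in $x_n$; that is precisely the excluded borderline trace estimate for $\nabla v\in B^{1/p,p}_p$ and is false in general. So the loss of the factor $|h'|^{1-1/p}$ you were worried about is still present, hidden in the mean terms.

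Moreover this is not a repairable technicality for all $p$. With the paper's convention $W^{1+1/p,p}=B^{1+1/p,p}_p$, the exact trace space of $W^{1+1/p,p}(\R^n_+)$ on $\{x_n=0\}$ is $B^{1,p}_p(\R^{n-1})$ (the Besov trace theorem, which is onto; this is also all that quoting \cite{Triebel1978} yields), and $B^{1,p}_p\hookrightarrow W^{1,p}$ holds precisely when $p\le 2$; for $p>2$ the inclusion goes strictly the other way, so there are functions in $W^{1+1/p,p}(\R^n_+)$ whose boundary values do not even have first difference quotients bounded in $L^p$ --- the very quantity your scheme sets out to bound. Hence for $1<p\le2$ the correct route is to quote the Besov trace theorem together with the embedding $B^{1,p}_p\hookrightarrow W^{1,p}$ (a second-difference/interpolation statement, not a first-difference one), while for $p>2$ the target must be read as $B^{1,p}_p(\p\Omega)$ or the hypotheses strengthened. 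A second gap of the same nature: the passage through merely Lipschitz charts is not an ``order $\le 1$'' operation as you claim, since the right-hand side of your model estimate involves $[\nabla v]_{1/p,p}$ of the flattened function, and multiplication of a $B^{1/p,p}_p$ function by the merely bounded matrix $\D\Phi_i^{-1}$ does not stay in $B^{1/p,p}_p$ (already multiplication by the characteristic function of a half-space fails at smoothness $1/p$ with third index $p$); so, as you suspected, that reduction needs a $C^1$ or $C^{1,\alpha}$ boundary. The final identification of the trace with the precise representative via capacities is fine and is the part genuinely covered by the paper's cited source.
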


Finally, we recall the following well-known result, which will justify extending $u\in W^{1,p}_g(\Omega)$ by extensions of $g$. We refer to \cite{Evans1992} for the ingredients of the proof.
\begin{lemma}\label{lem:extension}
Let $p\in[1,\infty]$ and $V\Supset \Omega$ an open, bounded set.
Suppose that $u\in W^{1,p}(\Omega)$ and $v\in W^{1,p}_u(\Omega)\cap W^{1,p}(V)$. Then the map
\begin{align*}
w = \begin{cases}
	u \text{ in } \Omega \\
	v \text{ in } V\setminus\Omega
	\end{cases}
\end{align*}
is an element of $W^{1,p}(V)$.
\end{lemma}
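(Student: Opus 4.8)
The statement to be proved is Lemma~\ref{lem:extension}: if $u\in W^{1,p}(\Omega)$, $V\Supset\Omega$ is open and bounded, and $v\in W^{1,p}_u(\Omega)\cap W^{1,p}(V)$, then the function $w$ equal to $u$ on $\Omega$ and to $v$ on $V\setminus\Omega$ lies in $W^{1,p}(V)$. The plan is to verify this directly from the definition of the weak gradient, by showing that the candidate gradient
\begin{align*}
G = \begin{cases} Du & \text{in } \Omega\\ Dv & \text{in } V\setminus\Omega\end{cases}
\end{align*}
is indeed the distributional gradient of $w$ on $V$. Since $w\in L^p(V)$ and $G\in L^p(V,\R^{N\times n})$ are clear (both pieces are in the relevant $L^p$ spaces and $|V\setminus\Omega|<\infty$), the whole content is the integration-by-parts identity $\int_V w\,\partial_i\varphi\,\d x = -\int_V G_i\,\varphi\,\d x$ for every $\varphi\in C_0^\infty(V)$ and every $i$.

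First I would fix $\varphi\in C_0^\infty(V)$ and split $\int_V w\,\partial_i\varphi = \int_\Omega u\,\partial_i\varphi + \int_{V\setminus\Omega} v\,\partial_i\varphi$. On $V\setminus\Omega$ the function $v$ is $W^{1,p}$ on the larger open set $V$, so the only obstruction to integrating by parts is the boundary $\partial\Omega$; similarly on $\Omega$. The key algebraic identity is that the boundary terms produced on $\partial\Omega$ by the two halves cancel \emph{because $u$ and $v$ have the same trace on $\partial\Omega$} — this is exactly the hypothesis $v\in W^{1,p}_u(\Omega)$, i.e. $v-u\in W^{1,p}_0(\Omega)$. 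Concretely, I would write $v = u + (v-u)$ on $\Omega$ with $v-u\in W^{1,p}_0(\Omega)$, extend $v-u$ by zero to all of $V$ (this zero-extension is in $W^{1,p}(V)$ by the standard characterisation of $W^{1,p}_0$), and then observe that $w = \tilde v + z$ on $V$, where $\tilde v$ is any fixed element of $W^{1,p}(V)$ agreeing with $v$ on $V\setminus\Omega$ — indeed $v$ itself works, since $v\in W^{1,p}(V)$ — and $z$ is the zero-extension to $V$ of $(u-v)|_\Omega \in W^{1,p}_0(\Omega)$. Both $v$ and $z$ are manifestly in $W^{1,p}(V)$, hence so is their sum $w$, and its gradient is $Dv + Dz$, which equals $Du$ on $\Omega$ and $Dv$ on $V\setminus\Omega$, as claimed.

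So the cleanest route avoids any explicit manipulation of boundary integrals: reduce everything to the single fact that the zero-extension of a $W^{1,p}_0(\Omega)$-function to a larger open set $V\supset\Omega$ belongs to $W^{1,p}(V)$, with the expected gradient. That fact is itself standard (approximate by $C_0^\infty(\Omega)$ functions, for which it is trivial, and pass to the limit in $W^{1,p}$), and is among the ``ingredients of the proof'' referenced in \cite{Evans1992}. The main (and essentially only) point requiring care is making sure the trace-matching hypothesis is used in the right form: it is $v-u\in W^{1,p}_0(\Omega)$ that matters, not merely equality of traces in a weaker sense, and for $p=\infty$ or $p=1$ one should note the zero-extension statement still holds. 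I expect no genuine obstacle here; the lemma is a bookkeeping statement and the only thing to get right is the decomposition $w = v + z$ on $V$ with $z$ the zero-extension of $(u-v)|_\Omega$.
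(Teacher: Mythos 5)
Your argument is correct: writing $w = v + z$ on $V$, where $z$ is the zero-extension of $(u-v)|_\Omega \in W^{1,p}_0(\Omega)$, reduces the lemma to the standard fact that such zero-extensions lie in $W^{1,p}(V)$ with the expected gradient, and both summands are manifestly in $W^{1,p}(V)$. The paper itself gives no written proof (it only points to the ingredients in the cited reference), and your decomposition is exactly the standard way those ingredients are assembled, so there is nothing substantively different to compare.
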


We require a Fubini-type theorem on spheres, that has independent interest. The statement is likely known to the expert, but we have been unable to find a reference in the literature. In the context of fractional Sobolev spaces such a result has been obtained in \cite{Gmeineder2021}. We remark that, while the proof in \cite{Gmeineder2021} relied on a geometric construction and direct calculation, our argument is based on interpolation.
\begin{lemma}\label{lem:FubiniType}
Let $\sigma>\rho\geq 0$ and denote $B_\sigma = B_\sigma(0)$, $B_\rho = B_\rho(0)$. 
Let $s\in(0,1)$ and ${1\leq \tau\leq q\leq p}$. If $v\in B^{s,p}_q(B_\sigma\setminus B_\rho)$, then we have
\begin{align*}
\int_\sigma^\rho \|v\|_{B^{s,p}_q(\p B_r)}^\tau\d r\lesssim \|v\|_{B^{s,p}_q(B_\sigma\setminus B_\rho)}^\tau,
\end{align*}
where the implicit constant depends only on $s,\tau,q,p, \sigma$ and $\rho$.
\end{lemma}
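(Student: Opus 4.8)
The plan is to prove the estimate by real interpolation, reducing it to the two endpoint cases $s=1$ (Sobolev) and $s=0$ (Lebesgue), where the statement is elementary, and then invoking the reiteration/interpolation inequality for the $K$-functional. Concretely, set $A = B_\sigma\setminus B_\rho$ and, for $r\in(\rho,\sigma)$, let $T_r$ denote the (trace-type) restriction operator $v\mapsto v|_{\partial B_r}$. I would view the assignment $v\mapsto (r\mapsto T_r v)$ as a single linear operator $\mathcal{T}$ and aim to show it is bounded
\[
\mathcal{T}\colon W^{1,p}(A)\to L^\tau\big((\rho,\sigma);W^{1,p}(\partial B_r)\big),\qquad
\mathcal{T}\colon L^{p}(A)\to L^\tau\big((\rho,\sigma);L^{p}(\partial B_r)\big),
\]
and then interpolate with the real method with parameters $(s,q)$. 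On the $W^{1,p}$ side, boundedness is just the coarea formula together with the fact that the tangential gradient on $\partial B_r$ is controlled by the full gradient of $v$ (here one uses $\tau\le p$ and that $(\rho,\sigma)$ is a bounded interval, so $L^p\hookrightarrow L^\tau$ in $r$ after Hölder); on the $L^p$ side it is literally the coarea formula and the same Hölder step. The point of allowing a general $\tau\le q$ rather than $\tau=q$ is exactly this room in the $r$-integrability, which the bounded-interval Hölder inequality provides.

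The core step is then the interpolation identity for the target space. I would use that real interpolation commutes with taking $L^\tau$ of a one-parameter family in the sense
\[
\big[L^\tau\big((\rho,\sigma);W^{1,p}(\partial B_r)\big),\,L^\tau\big((\rho,\sigma);L^{p}(\partial B_r)\big)\big]_{s,q}
\hookrightarrow L^\tau\big((\rho,\sigma);[W^{1,p}(\partial B_r),L^p(\partial B_r)]_{s,q}\big)
= L^\tau\big((\rho,\sigma);B^{s,p}_q(\partial B_r)\big),
\]
valid because $\tau\le q$: this is the standard fact that $[L^\tau(X_0),L^\tau(X_1)]_{s,q}\subset L^\tau([X_0,X_1]_{s,q})$ when $q\ge\tau$ (one bounds the $K$-functional of the vector-valued family pointwise in $r$ by the fiberwise $K$-functional, then applies the $\ell^q$-versus-$\ell^\tau$ inequality on the decomposition). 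Combining the two displays with the interpolation of the operator $\mathcal{T}$ gives
\[
\|\mathcal{T}v\|_{L^\tau((\rho,\sigma);B^{s,p}_q(\partial B_r))}\lesssim \|v\|_{[W^{1,p}(A),L^p(A)]_{s,q}} = \|v\|_{B^{s,p}_q(A)},
\]
which, since $\|\mathcal{T}v\|_{L^\tau(\cdots)}^\tau = \int_\rho^\sigma \|v\|_{B^{s,p}_q(\partial B_r)}^\tau\,\d r$, is exactly the claimed inequality. The constants depend only on $s,\tau,q,p,\sigma,\rho$ as they come from the coarea factor, the Hölder step on $(\rho,\sigma)$, and the interpolation constants, none of which see $v$.

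The step I expect to be the main obstacle is making the interpolation-commutes-with-$L^\tau$ inclusion fully rigorous in this concrete setting: one must be careful that the spheres $\partial B_r$ for varying $r$ are genuinely different manifolds, so the "fiber space" $W^{1,p}(\partial B_r)$ changes with $r$, and one should either rescale each sphere to the unit sphere $\S^{n-1}$ (tracking the harmless $r$-dependent, but uniformly bounded on $[\rho,\sigma]$, constants in the rescaling) or work with the $K$-functional directly and check measurability in $r$ of all quantities involved. A clean way is to first do a smooth change of variables mapping $A$ diffeomorphically to $(\rho,\sigma)\times\S^{n-1}$ (polar coordinates), which turns the fiber spaces into the fixed space $W^{1,p}(\S^{n-1})$ and reduces everything to the genuinely product situation where the vector-valued interpolation statement is textbook; the Jacobian and metric distortion factors are bounded above and below on $[\rho,\sigma]$, so they only affect the implicit constant. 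Once this reduction is in place, the remaining estimates are routine.
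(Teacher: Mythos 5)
Your argument is correct and is essentially the paper's: both prove the estimate by real interpolation between the endpoint couple $L^p$ and $W^{1,p}$, where the bound is a Fubini/H\"older triviality on spheres, and both use $\tau\le q$ to commute the $r$-integration with the interpolation norm. The paper simply carries out your ``interpolation commutes with $L^\tau$'' step by hand on the $K$-functional (Jensen's inequality, Fubini, then passing to decompositions defined on the whole annulus and applying Fatou), which sidesteps the varying-fiber issue you flag and also covers the case $\rho=0$ without the polar-coordinate rescaling.
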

\begin{proof}
Let $v\in B^{s,p}_q(B_\sigma\setminus B_\rho)$.
Since $B^{s,p}_q(\p B_r)=[L^p(\p B_r),W^{1,p}(\p B_r)]_{s,q}$, we have
\begin{align*}
\|v\|_{B^{s,p}_q(\p B_r)} = \left(\int_0^\infty\left(t^{-s}K(v,t,r)\right)^q\frac{\d t}{t}\right)^\frac 1 q.
\end{align*}
where 
$$
K(v,t,r)=\inf\left\{ \|v_1\|_{L^p(\p B_r)} + t\|v_2\|_{W^{1,p}(\p B_r)}\colon v = v_1+v_2, \, (v_1,v_2)\in X\right\}
$$
for $X=L^p(\p B_r)\times W^{1,p}(\p B_r)$.
Using Jensen's inequality and Fubini's theorem, we find
\begin{align*}
\int_\rho^\sigma \|v\|_{B^{s,p}_q(\p B_r)}^\tau\d r=&\int_\rho^\sigma\left(\int_0^\infty (t^{-s} K(v,t,r))^q\d r \frac{\d t}{t}\right)^\frac \tau q\d r\\
\lesssim& \left(\int_0^\infty \int_\rho^\sigma (t^{-s} K(v,t,r))^q\d r \frac{\d t}{t}\right)^\frac \tau q\\
\lesssim& \left(\int_0^\infty t^{-s q} \int_\rho^\sigma K'(v,t,r)^q\d r \frac{\d t}{t}\right)^\frac \tau q= I
\end{align*}
where
\begin{align*}
K'(v,t,r)&=\inf\left\{ \|v_1\|_{L^p(\p B_r)} + t\|v_2\|_{W^{1,p}(\p B_r)}\colon v = v_1+v_2 \text{ in } B_\sigma\setminus B_\rho, (v_1,v_2)\in Y\right\}
\end{align*}
for $ Y = L^p(B_\sigma\setminus B_\rho)\times W^{1,p}(B_\sigma\setminus B_\rho).$
Using that $q\leq p$, we have 
\begin{align*}
\int_\rho^\sigma (\|v_1\|_{L^p(\p B_r)}+t\|v_2\|_{W^{1,p}(\p B_r)})^q\d r&\lesssim \int_\rho^\sigma \|v_1\|_{L^p(\p B_r)}^q+t^q\|v_2\|_{W^{1,p}(\p B_r)}^q\\
&\lesssim \|v_1\|_{L^p(B_\sigma\setminus B_r)}^q+ t^q \|v_2\|_{W^{1,p}(B_\sigma\setminus B_\rho)}^q\\
&\lesssim \left(\|v_1\|_{L^p(B_\sigma\setminus B_r)}+ t \|v_2\|_{W^{1,p}(B_\sigma\setminus B_\rho)}\right)^q.
\end{align*}
Thus, using Fatou's lemma,
\begin{align*}
I&\lesssim \left(\int_0^\infty t^{-s q}\inf\left(\|v_1\|_{L^p(B_\sigma\setminus B_r)}+ t \|v_2\|_{W^{1,p}(B_\sigma\setminus B_\rho)}\right)^q \d r \frac{\d t}{t}\right)^\frac \tau q
&= \|v\|_{B^{s,p}_q(B_\sigma\setminus B_\rho)}^\tau,
\end{align*}
where the infinimum is taken over $$\{(v_1,v_2)\in Y\colon v = v_1+v_2 \text{ in } B_\sigma\setminus B_\rho\}.$$
\end{proof}

We use the following interpolation inequality, which is a direct consequence of H\"older's inequality.
Given $p_1$, $p_2\in[1,\infty]$ and $0<\theta<1$, define $p_\theta$ by the identity $\frac 1 {p_\theta} = \frac \theta {p_1}+\frac {1-\theta}{p_2}$. Then, for $u\in L^{p_1}(\Omega)\cap L^{p_2}(\Omega)$, we have
\begin{align}\label{eq:interpolationInequality}
\|u\|_{L^{p_\theta}(\Omega)}\leq \|u\|_{L^{p_1}(\Omega)}^\theta \|u\|_{L^{p_2}(\Omega)}^{(1-\theta)}.
\end{align}

\subsection{Some properties of Lipschitz and \texorpdfstring{$C^{1,\alpha}$}-domains}
\label{sec:lipschitzDomains}
In this section we recall some properties of Lipschitz and $C^{1,\alpha}$-domains. For further details we refer to \cite{Grisvard1992}.
We say $\Omega\subset\R^n$ is a Lipschitz ($C^{1,\alpha}$) domain if $\Omega$ is an open subset of $\R^n$ and for every $x\in \p\Omega$, there exist a neighbourhood $V$ of $x$ in $\R^n$ and orthogonal coordinates $\{\,y_i\,\}_{1\leq i\leq n}$ such that the following holds:
\begin{enumerate}
\item $V$ is a hypercube in the new coordinates:
\begin{equation*}
V = \{\,(y_1,\ldots, y_n\,\}: -a_i<y_i<a_i,\, 1\leq i\leq n-1\,\}.
\end{equation*}
\item there exists a Lipschitz ($C^{1,\alpha}$) function $\phi$ defined in 
\begin{align*}
V'=\{\,(y_1,\ldots y_{n-1}): -a_i<y_i<a_i,\, 1\leq i\leq n-1\,\}
\end{align*}
such that
\begin{gather*}
|\phi(y')|\leq a_n/2 \text{ for every } y'=(y_1,\ldots, y_{n-1})\in V',\\[5pt]
\Omega\cap V = \{\,y = (y',y_n)\in V: y_n<\phi(y')\,\},\\[5pt]
\p\Omega \cap V = \{\,y=(y',y_n)\in V: y_n = \phi(y')\,\}.
\end{gather*}
\end{enumerate}

Let $\Omega$ be a Lipschitz domain. Then $\Omega$ satisfies a uniform exterior cone condition \cite[Section 1.2.2]{Grisvard1992}. Indeed, there exist $\rho_0$, $\theta_0>0$ and a map $\pmb n\colon \R^n\to \S^{n-1}$ such that for every $x\in \R^n,$
\begin{align}\label{eq:uniformCone}
C_{\rho_0}(\theta_0,\pmb n(x))\subset O_{\rho_0}(x)=\left\{\,h\in\R^n: |h|\leq \rho_0,\, \left(\Omega\setminus B_{3\rho_0}(x)\right)+h\subset\R^n\setminus\Omega\,\right\}.
\end{align}
Moreover there is a smooth vector field transversal to $\p\Omega$, that is, there exist $\kappa>0$ and ${X\in C^\infty(\R^n,\R^n)}$ such that
\begin{align*}
X\cdot \nu\geq \kappa \quad \text{ a.e. on } \p\Omega
\end{align*}
 where $\nu$ is the exterior unit normal to $\p\Omega$ \cite[Lemma 1.5.1.9.]{Grisvard1992}.

We recall the following Lemma from \cite{Koch2020} which enables us to to stretch a small neighbourhood of the boundary in a controlled manner. This is crucial in constructing sequences with improved integrability but unchanged boundary behaviour.
\begin{lemma}[Lemma 2.6 in \cite{Koch2020}]\label{lem:diffeos}
Suppose $\Omega$ is a $C^{1,\alpha}$-domain. Then
there is a family of domains $\Omega^s\Supset\Omega$ and a family of $C^{1,\alpha}$-diffeomorphisms $\Psi_s\colon \Omega^s\to\Omega$ such that
\begin{enumerate}
\item $J\Psi_s\to 1$ and $|D\Psi_s-\tp{Id}|\to 0$ uniformly in $\Omega^s$ as $s\nearrow 1$. Equivalently, $J\Psi_s^{-1}\to 1$ and $|D\Psi_s^{-1}-\tp{Id}|\to 0$ uniformly in $\Omega$ as $s\nearrow 1$.
\item If $g\in W^{1+\frac 1 q,q}(\Omega)$ there is an extension $\hat g$ of $g$ to $\Omega^s$ such that $\hat g\in W^{1,q}(\Omega^s)$ and $\hat g\circ \Psi_s^{-1}\in W^{1,q}_g(\Omega)$.
\end{enumerate}
\end{lemma}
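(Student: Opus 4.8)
The plan is to build $\Psi_s$ from the flow of a vector field transversal to $\p\Omega$. Recall from Section~\ref{sec:lipschitzDomains} the field $X\in C^\infty(\R^n,\R^n)$ with $X\cdot\nu\geq\kappa>0$ a.e.\ on $\p\Omega$; after multiplying by a smooth cut-off equal to $1$ near $\overline\Omega$ we may take $X$ compactly supported, so its flow $\{\Phi_t\}$ is a one-parameter group of $C^\infty$-diffeomorphisms of $\R^n$ with $\Phi_0=\tp{Id}$. Using that on a $C^{1,\alpha}$-domain the signed distance $d$ to $\p\Omega$ is $C^{1,\alpha}$ near $\p\Omega$ with $\nabla d=\nu$ there, transversality gives $\p_t|_{t=0}\, d(\Phi_t(y))=X(y)\cdot\nu(y)\geq\kappa$ uniformly in $y\in\p\Omega$; hence there is $t_0>0$ so that for $0<t\leq t_0$ the set $\Phi_t(\Omega)$ is a $C^{1,\alpha}$-domain (the image of one under a $C^\infty$-diffeomorphism) with $\overline\Omega\subset\Phi_t(\Omega)$. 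I would then set $t(s):=1-s$ for $s\in(1-t_0,1)$, $\Omega^s:=\Phi_{t(s)}(\Omega)$ and $\Psi_s:=\Phi_{-t(s)}|_{\Omega^s}\colon\Omega^s\to\Omega$, so that $\Psi_s^{-1}=\Phi_{t(s)}|_\Omega$. Property (i) then follows from the smooth dependence of the flow on time: $\Phi_t\to\tp{Id}$ in $C^1$ uniformly on the fixed compact set $\overline{\Omega^{1-t_0}}\supset\overline{\Omega^s}$ as $t\to0$, whence $|D\Psi_s-\tp{Id}|\to0$ and $J\Psi_s\to1$ uniformly on $\Omega^s$; the equivalent statement for $\Psi_s^{-1}$ comes from $D\Psi_s^{-1}(\Psi_s(x))=(D\Psi_s(x))^{-1}$ together with continuity of matrix inversion near $\tp{Id}$.

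For (ii), shrinking $t_0$ if needed, the map $(y,t)\mapsto\Phi_t(y)$ is a $C^{1,\alpha}$-diffeomorphism of $\p\Omega\times[0,t_0]$ onto a one-sided collar of $\p\Omega$, and in particular $\overline{\Omega^s}\setminus\Omega=\{\Phi_t(y)\colon y\in\p\Omega,\ 0\le t\le t(s)\}$; let $P\colon\overline{\Omega^s}\setminus\Omega\to\p\Omega$, $P(\Phi_t(y))=y$, be the associated $C^{1,\alpha}$-retraction along flow lines. Given $g\in W^{1+\frac1q,q}(\Omega)$, Lemma~\ref{lem:traceTheorem} gives $\tp{Tr}(g)\in W^{1,q}(\p\Omega)$, and I would take $\hat g$ to equal $g$ on $\Omega$ and $\tp{Tr}(g)\circ P$ on $\Omega^s\setminus\overline\Omega$. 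On the collar the gradient of $\tp{Tr}(g)\circ P$ has vanishing component along the flow direction and a tangential component dominated by $|\nabla_{\mathrm{tan}}\tp{Tr}(g)|\circ P\in L^q$, so $\hat g\in W^{1,q}(\Omega^s\setminus\overline\Omega)$; since the traces of the two pieces coincide on $\p\Omega$ (both equal $\tp{Tr}(g)$), a gluing argument in the spirit of Lemma~\ref{lem:extension} gives $\hat g\in W^{1,q}(\Omega^s)$, and by construction $\hat g$ extends $g$. Finally $\Psi_s^{-1}=\Phi_{t(s)}$ is a $C^\infty$-diffeomorphism of $\Omega$ onto $\Omega^s$, so $\hat g\circ\Psi_s^{-1}\in W^{1,q}(\Omega)$; and for $x\in\p\Omega$ the point $\Phi_{t(s)}(x)\in\p\Omega^s$ lies on the flow line through $x$, hence $P(\Phi_{t(s)}(x))=x$ and $\hat g(\Phi_{t(s)}(x))=\tp{Tr}(g)(x)$. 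Thus $\hat g\circ\Psi_s^{-1}-g\in W^{1,q}_0(\Omega)$, i.e.\ $\hat g\circ\Psi_s^{-1}\in W^{1,q}_g(\Omega)$, which is (ii).

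The step I expect to be the crux is this last trace identity: an arbitrary $W^{1,q}$-extension of $g$ to $\Omega^s$ will \emph{not} have the required property, because pulling back by $\Psi_s^{-1}$ displaces boundary values along the flow by an amount that is small in $L^q$ but not in $W^{1,q}$, so the pulled-back trace generically differs from $\tp{Tr}(g)$ by a function not lying in $W^{1,q}_0(\Omega)$. Extending $g$ so that it is \emph{constant along flow lines} in the collar is precisely what forces the pulled-back trace on $\p\Omega$ to equal $\tp{Tr}(g)$ exactly, and it is here that the hypothesis $g\in W^{1+\frac1q,q}(\Omega)$ (rather than merely $g\in W^{1,q}(\Omega)$) is needed, via the trace theorem that supplies $\tp{Tr}(g)\in W^{1,q}(\p\Omega)$, whose tangential derivatives control the collar gradient.
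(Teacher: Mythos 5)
The paper gives no proof of this lemma---it is quoted verbatim from \cite{Koch2020}---so there is nothing internal to compare against; your construction via the flow $\Phi_t$ of the smooth transversal vector field $X$ from Section~\ref{sec:lipschitzDomains}, with $\Omega^s=\Phi_{1-s}(\Omega)$, $\Psi_s=\Phi_{-(1-s)}$ and the collar extension taken constant along flow lines, is correct and is essentially the argument behind Lemma 2.6 of \cite{Koch2020}. You have also correctly isolated the crux: a generic $W^{1,q}$-extension of $g$ would fail property (ii), and it is exactly the flow-line-constant extension of $\tp{Tr}(g)\in W^{1,q}(\p\Omega)$ (available because $g\in W^{1+\frac1q,q}(\Omega)$, via Lemma~\ref{lem:traceTheorem}) that forces $\hat g\circ\Psi_s^{-1}-g$ to have vanishing trace and hence lie in $W^{1,q}_0(\Omega)$.
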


We conclude this section by noting a number of extensions, that we may carry out if $\Omega$ is a Lipschitz domain.
Let $\Omega\Subset B(0,R)$.
From \cite{Rychkov1999}, if $g\in W^{s,p}(\Omega)$, there is an extension ${\tilde g\in W^{s,p}(\R^n,\R^N)}$ of $g$ with
\begin{align}\label{eq:Gextension}
\|\tilde g\|_{W^{s,p}(\R^n)}\lesssim \|g\|_{W^{s,p}(\Omega)}.
\end{align}
Furthermore, we can extend $F(x,z)$ to a function on $B(0,R) \times\R^{N\times n}$, still denoted $F(x,z)$, such that it satisfies
\begin{gather*}
|F(x,z)-F(y,z)|\lesssim \Lambda |x-y|^\alpha (1+|z|^2)^\frac q 2\\
|F(x,z)|\lesssim \Lambda(1+|z|^2)^\frac q 2.
\end{gather*}
We do this by setting, for $x\in B(0,R)\setminus\Omega$,
\begin{align*}
F(x,z)= \inf_{y\in\Omega}\left(F(y,z)+\Lambda \left(1+|z|^2\right)^\frac q 2 |x-y|\right).
\end{align*}

\subsection{Relaxed minimisers and Lavrentiev}
We begin by showing that $K^{*,\psi}_g(\Omega)$ is dense in $K^\psi_g(\Omega)$ with respect to the $W^{1,p}(\Omega)$ norm. Note that this is a requirement in order for us to make sense of the relaxed functional $\overline\F(\cdot)$.
\begin{lemma}\label{cor:density}
Suppose $g$, $\psi\in W^{1+1/q,q}(\Omega)$. Then $K^{*,\psi}_g(\Omega)$ is dense in $K^\psi_g(\Omega)$ with respect to the $W^{1,p}(\Omega)$ norm.
\end{lemma}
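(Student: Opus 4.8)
The plan is to prove density by an explicit three-step construction: first reduce to the case of zero boundary data, then truncate the obstacle, and finally mollify while preserving the constraint. Let $u \in K^\psi_g(\Omega)$ be given. Since $g \in W^{1+1/q,q}(\Omega)$, the trace theorem (Lemma \ref{lem:traceTheorem}) tells us $g$ has a trace in $W^{1,q}(\p\Omega)$, and by the extension results recalled at the end of Section \ref{sec:lipschitzDomains} we may regard $g$ itself as a $W^{1+1/q,q}(\R^n)$ function. Writing $v = u - g \in W^{1,p}_0(\Omega)$, the constraint $u \geq \psi$ becomes $v \geq \psi - g =: \tilde\psi$, where $\tilde\psi \in W^{1+1/q,q}(\Omega)$ and, crucially, $\tilde\psi \leq 0$ on $\p\Omega$ in the sense of traces (since $g \geq \psi$ there). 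So it suffices to approximate $v \in W^{1,p}_0(\Omega)$ with $v \geq \tilde\psi$ by functions in $W^{1,q}_0(\Omega)$ satisfying the same constraint.

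The second step is to handle the mismatch between the Sobolev-regularity of $v$ (only $W^{1,p}$) and the $W^{1,q}$-target. Here the standard device is to \emph{dilate}: for $t \in (0,1)$ close to $1$, set $v_t(x) = v(x/t)$ (extending $v$ by $0$ outside $\Omega$), which is supported in $t\Omega \Subset \Omega$ and converges to $v$ in $W^{1,p}(\Omega)$ as $t \nearrow 1$. Since $v_t$ is compactly supported in $\Omega$, a mollification $v_t \star \phi_\e$ with $\e$ small enough (depending on $t$) still has compact support in $\Omega$, lies in $C_0^\infty(\Omega) \subset W^{1,q}_0(\Omega)$, and converges to $v_t$ in $W^{1,p}$ as $\e \to 0$. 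The one thing to verify is that the constraint survives: on $t\Omega$ we need $v_t \star \phi_\e \geq \tilde\psi$. This will not be automatic because mollification can decrease a function pointwise relative to $\tilde\psi$; the fix is to mollify $\tilde\psi$ as well and use that $\tilde\psi \star \phi_\e \to \tilde\psi$ uniformly on compact sets (as $\tilde\psi \in W^{1+1/q,q} \hookrightarrow C^0$ when the embedding is available, or at least in $L^q$ with a small additive error otherwise), then add a vanishing positive constant. Concretely, I would set $w_{t,\e} = v_t \star \phi_\e + c_{t,\e}\,\mathbf{1}$ where $c_{t,\e} \to 0$ is chosen so that $v_t \star \phi_\e + c_{t,\e} \geq \tilde\psi$ pointwise on $\overline{t\Omega}$; this uses $v_t \geq \tilde\psi$ on $t\Omega$ together with $\|(\tilde\psi - v_t) \star \phi_\e - (\tilde\psi - v_t)\|_\infty$ controlled, the latter being where $\tilde\psi \in W^{2,\infty}$ — wait, that is not assumed here, only $\tilde\psi \in W^{1+1/q,q}$ — so instead I use uniform continuity of $\tilde\psi \star \phi_\e$ and Lipschitz-type control near the support. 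Adding a constant spoils the zero boundary data, so the correct object is $w_{t,\e} = v_t \star \phi_\e + c_{t,\e}\,\eta$ where $\eta \in C_0^\infty(\Omega)$ equals $1$ on a neighbourhood of $\overline{t\Omega}$; then $w_{t,\e} \in W^{1,q}_0(\Omega)$, it satisfies $w_{t,\e} \geq \tilde\psi$ everywhere (on $t\Omega$ by construction, outside $t\Omega$ because there $v_t\star\phi_\e = 0 \geq \tilde\psi$ fails only where $\tilde\psi > 0$, which is a set whose trace on $\p\Omega$ vanishes — this needs a further truncation of $\tilde\psi$ from above by a compactly-supported modification, or one restricts attention to the region $\{\tilde\psi \leq 0\}$ near $\p\Omega$).

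The third step is the diagonal argument: choosing $\e = \e(t) \to 0$ sufficiently fast as $t \nearrow 1$, the functions $w_{t,\e(t)}$ lie in $K^{*,\tilde\psi}_0(\Omega)$ and converge to $v$ in $W^{1,p}(\Omega)$; translating back by $g$ gives the desired approximating sequence in $K^{*,\psi}_g(\Omega)$ converging to $u$ in $W^{1,p}(\Omega)$. The main obstacle is clearly the bookkeeping around the constraint near $\p\Omega$: one must arrange that the corrector term $c_{t,\e}\eta$ restores $v \geq \tilde\psi$ in the dilated region while keeping zero boundary values and $W^{1,p}$-smallness simultaneously, which forces the order of limits (first fix $t$, send $\e \to 0$, then $t \nearrow 1$) and a careful choice of $\eta$ adapted to $t$. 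Everything else — convergence of dilations, convergence of mollifications, the extension of $g$ — is standard and quotable from the preliminaries.
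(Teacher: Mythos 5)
Your overall three-step architecture (reduce to nicer boundary behaviour, approximate in the interior, diagonalise) matches the paper's, but the two places where the real work happens are exactly the places where your construction has gaps.

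First, constraint preservation under mollification. Your fix is to add a small corrector $c_{t,\e}\eta$, with $c_{t,\e}$ chosen via uniform control of $(\tilde\psi - v_t)\star\phi_\e - (\tilde\psi - v_t)$. As you yourself notice mid-proof, this control is not available: $\tilde\psi\in W^{1+1/q,q}(\Omega)$ need not be continuous, let alone bounded or uniformly continuous (the embedding into $C^0$ fails for small $q$ relative to $n$), and $v_t$ is only $W^{1,p}$, so neither term converges uniformly under mollification. The appeal to ``uniform continuity of $\tilde\psi\star\phi_\e$ and Lipschitz-type control'' does not close this. The device that actually works, and that the paper uses, needs no uniform control at all: since the constraint is an inequality and $\phi_\e\geq 0$, mollification preserves it, i.e.\ $u\geq\psi$ gives $u\star\phi_\e\geq\psi\star\phi_\e$ pointwise, so the approximant $u\star\phi_\e-\psi\star\phi_\e+\psi$ satisfies the \emph{original} constraint exactly, with no additive constant and no smallness argument. (Note also that after your dilation $v_t$ dominates the \emph{dilated} obstacle $\tilde\psi(\cdot/t)$, not $\tilde\psi$ itself, which introduces the same kind of uncontrollable $L^\infty$ error a second time.)

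Second, the boundary collar. In $\Omega\setminus t\Omega$ your approximant is essentially $0$, and you need $0\geq\tilde\psi$ there. The hypothesis $g\geq\psi$ on $\p\Omega$ is a trace inequality and gives no pointwise information on any interior neighbourhood of $\p\Omega$; the set $\{\tilde\psi>0\}$ can accumulate on all of $\p\Omega$. Your proposed remedies (truncating $\tilde\psi$ from above, or ``restricting attention to $\{\tilde\psi\leq 0\}$'') change the obstacle and hence the class $K^{*,\psi}_g(\Omega)$ you are supposed to land in, so they do not repair the argument. This boundary difficulty is precisely what the paper's first step is built for: using the diffeomorphisms of Lemma \ref{lem:diffeos} one replaces $u$ by $v^s=u^s-\psi^s+\psi$, which still lies in $K^\psi_g(\Omega)$ and is $W^{1,q}$ in a neighbourhood $U$ of $\p\Omega$; the interior mollification is then glued to the \emph{unmodified} $u$ near $\p\Omega$ via $u_\e=\eta(u\star\phi_\e-\psi\star\phi_\e+\psi)+(1-\eta)u$, so the constraint and boundary datum near $\p\Omega$ are never touched. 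A further, more minor point: the dilation $v_t(x)=v(x/t)$ presupposes $\Omega$ star-shaped; for the Lipschitz/$C^{1,\alpha}$ domains of the paper this must be replaced by a localisation or by the boundary-stretching diffeomorphisms, which is again what Lemma \ref{lem:diffeos} provides.
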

\begin{proof}
Let $u\in K^\psi_g(\Omega)$. We first show that without loss of generality we may assume that $u\in W^{1,q}(U)$ for some open neighbourhood $U$ of $\p\Omega$. Using the notation of Lemma \ref{lem:diffeos}, let $s\in(1/2,1)$ and extend $u$ by $\hat g$ and $\psi$ by $\hat \psi$ to $\Omega^s$. Define
\begin{align*}
u^s(x)= u(\Psi_s^{-1}(x)), \qquad \psi^s(x) = \psi(\Psi_s^{-1}(x))
\end{align*}
and consider $v^s = u^s-\psi^s+\psi$. Then we have $v^s\in K^\psi_g(\Omega)$ and clearly $v^s\to u$ in $W^{1,p}(\Omega)$ as $s\to 1$. Further $v\in W^{1,q}(U)$ for an open neighbourhood $U$ of $\p\Omega$.

Assuming now that $u\in W^{1,q}(U)$ for an open neighbourhood $U$ of $\p\Omega$, let $\eta\in C_c^\infty(\Omega)$ be a smooth cut-off function with $\eta=1$ in $\Omega\setminus U$. We then consider
\begin{align*}
u_\e = \eta (u\star \phi_\e-\psi\star \phi_\e+\psi)+(1-\eta) u.
\end{align*}
We note, because $u \geq \psi$, we have $u\star \phi_\e\geq \psi\star \phi_\e$. Thus $u_\e\in K^{*,\psi}_g(\Omega)$. Furthermore, $u_\e\to u$ in $W^{1,p}(\Omega)$ as $\e\to 0$. Hence the proof is complete.
\end{proof}

Given $F(x,z)$ satisfying $(p,q)$-growth, we consider the regularised functional
\begin{align*}
\F_\e(u)=\int_\Omega F(x,\D u)+\e|\D u|^q\d x.
\end{align*}
We wish to relate minimisers of $\overline\F(\cdot)$ to minimisers of $\F_\e(\cdot)$.

\begin{lemma}[c.f. Lemma 6.4. in \cite{Marcellini1989}]\label{lem:lavrentiev} Let $g$, $\psi\in W^{1+1/q,q}(\Omega)$. Suppose that $F(x,z)$ satisfies \eqref{ass:elliptic} and \eqref{ass:growth}.
We take $u$, a relaxed minimiser of $\F(\cdot)$ in the class $K^{\psi}_g(\Omega)$, and $u_\e$, the pointwise minimiser of $\F_\e(\cdot)$ in the class $K^{*,\psi}_g(\Omega)$. Then $\F_\e(u_\e)\to \overline\F(u)$ as $\e\to 0$. Moreover, up to a subsequence, we have $u_\e\to u$ in $W^{1,p}(\Omega)$.
\end{lemma}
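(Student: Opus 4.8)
The plan is to prove convergence $\F_\e(u_\e)\to\overline\F(u)$ and the strong convergence $u_\e\to u$ in $W^{1,p}(\Omega)$ by combining a lower bound via relaxation with an upper bound via a recovery sequence for $\overline\F$. First I would fix notation: for each $\e>0$ let $u_\e\in K^{*,\psi}_g(\Omega)$ denote the pointwise minimiser of $\F_\e(\cdot)$ over $K^{*,\psi}_g(\Omega)$, whose existence follows from the direct method since $\F_\e$ has standard $q$-growth from above by \eqref{ass:growth} and coercivity in $W^{1,q}$ from the added $\e|\D u|^q$ term, while \eqref{ass:elliptic} guarantees convexity hence weak lower semicontinuity. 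Since $u\in K^\psi_g(\Omega)$ and $K^{*,\psi}_g(\Omega)$ is dense in $K^\psi_g(\Omega)$ in $W^{1,p}(\Omega)$ (Lemma \ref{cor:density}), but more to the point $\overline\F(u)<\infty$ (this is where we use that $u$ is a relaxed minimiser and that a recovery sequence exists), there is a sequence $w_j\in K^{*,\psi}_g(\Omega)$ with $w_j\rightharpoonup u$ weakly in $W^{1,p}(\Omega)$ and $\int_\Omega F(x,\D w_j)\d x\to\overline\F(u)$.

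The upper bound comes from testing minimality of $u_\e$ against these $w_j$. For fixed $j$, $\F_\e(u_\e)\leq\F_\e(w_j)=\int_\Omega F(x,\D w_j)\d x+\e\int_\Omega|\D w_j|^q\d x$. Taking $\limsup_{\e\to 0}$ gives $\limsup_{\e\to 0}\F_\e(u_\e)\leq\int_\Omega F(x,\D w_j)\d x$ for every $j$, and then letting $j\to\infty$ yields $\limsup_{\e\to 0}\F_\e(u_\e)\leq\overline\F(u)$. In particular $\F_\e(u_\e)$ is bounded, so by \eqref{def:lower} (the bound $F(x,z)\gtrsim|z|^p-1$) the family $\{u_\e\}$ is bounded in $W^{1,p}(\Omega)$; since all $u_\e$ share the boundary datum $g$, a subsequence converges weakly in $W^{1,p}(\Omega)$ to some $\tilde u\in K^\psi_g(\Omega)$ (the constraint $u_\e\geq\psi$ and the affine boundary condition are preserved under weak $W^{1,p}$-limits, the former because $\{v\geq\psi\}$ is convex and strongly closed hence weakly closed).

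For the lower bound, I would use the characterisation of $\overline\F$: since $u_\e\rightharpoonup\tilde u$ weakly in $W^{1,p}(\Omega)$ with $u_\e\in K^{*,\psi}_g(\Omega)$, and $\F_\e(u_\e)\geq\int_\Omega F(x,\D u_\e)\d x$, the very definition of $\overline\F$ gives $\overline\F(\tilde u)\leq\liminf_{\e\to 0}\int_\Omega F(x,\D u_\e)\d x\leq\liminf_{\e\to 0}\F_\e(u_\e)$. Combining with the upper bound, $\overline\F(\tilde u)\leq\overline\F(u)$; but $u$ is a relaxed minimiser, so $\overline\F(u)\leq\overline\F(\tilde u)$, forcing equality and $\limsup_{\e\to 0}\F_\e(u_\e)\leq\overline\F(u)=\overline\F(\tilde u)\leq\liminf_{\e\to 0}\F_\e(u_\e)$, whence $\F_\e(u_\e)\to\overline\F(u)$. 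If the relaxed minimiser is known to be unique (which, given strict convexity coming from \eqref{def:bounds1} with $p\geq 2$, holds up to the usual caveats) one concludes $\tilde u=u$; otherwise one simply renames $\tilde u$ — but here the statement asserts convergence to $u$ itself, so I would invoke uniqueness of the relaxed minimiser, or note that the argument shows every weak limit point of $(u_\e)$ is a relaxed minimiser and, under the strict convexity afforded by the hypotheses, equals $u$.

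The remaining point, and the main technical obstacle, is upgrading weak convergence $u_\e\rightharpoonup u$ in $W^{1,p}$ to strong convergence. Here I would use the uniform convexity encoded in \eqref{def:bounds1}: writing the standard inequality in terms of the $V$-function from Lemma \ref{lem:Vfunctional}, one has $\int_\Omega|V_{\mu,p}(\D u_\e)-V_{\mu,p}(\D u)|^2\d x\lesssim\int_\Omega\big(F(x,\D u_\e)-F(x,\D u)-\langle\partial_z F(x,\D u),\D u_\e-\D u\rangle\big)\d x$. The first two terms on the right are controlled because $\int_\Omega F(x,\D u_\e)\d x\leq\F_\e(u_\e)\to\overline\F(u)=\F(u)=\int_\Omega F(x,\D u)\d x$ (using $\overline\F(u)=\F(u)$ for $u\in K^{*,\psi}_g$, or more carefully comparing $\int F(x,\D u_\e)$ against $\int F(x,\D u_j)$ along the recovery sequence), and the cross term $\int_\Omega\langle\partial_z F(x,\D u),\D u_\e-\D u\rangle\d x\to 0$ by weak convergence, provided $\partial_z F(x,\D u)\in L^{q'}(\Omega)$, which follows from \eqref{ass:growth} and the fact that $u\in W^{1,q}$ along the recovery sequence — one must be slightly careful and may instead test against $\D w_j$ and pass to the limit in $j$ as well. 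This yields $V_{\mu,p}(\D u_\e)\to V_{\mu,p}(\D u)$ in $L^2(\Omega)$, and by the elementary equivalence $|V_{\mu,p}(\xi)-V_{\mu,p}(\eta)|^2\sim(\mu^2+|\xi|^2+|\eta|^2)^{(p-2)/2}|\xi-\eta|^2\gtrsim|\xi-\eta|^p$ for $p\geq 2$, this gives $\D u_\e\to\D u$ in $L^p(\Omega)$, hence $u_\e\to u$ strongly in $W^{1,p}(\Omega)$. The delicate bookkeeping is to make the ``$\int F(x,\D u_\e)\to\int F(x,\D u)$'' step rigorous in the presence of the relaxation, which is why threading everything through the recovery sequence $w_j$ and taking the iterated limits in the correct order ($\e\to0$ first, then $j\to\infty$) is essential.
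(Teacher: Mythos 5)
Your argument for the energy convergence $\F_\e(u_\e)\to\overline\F(u)$ and for the weak subsequential convergence is correct and essentially the paper's: upper bound by testing minimality of $u_\e$ against $W^{1,q}$ competitors and extending to $u$ via the definition of $\overline\F$, lower bound directly from the definition of the relaxed functional, identification of the weak limit via strict convexity. That part stands.

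The strong convergence step, however, has a genuine gap. Your key inequality bounds $\int_\Omega|V_{\mu,p}(\D u_\e)-V_{\mu,p}(\D u)|^2\d x$ by an expression containing the cross term $\int_\Omega\langle\partial_z F(x,\D u),\D u_\e-\D u\rangle\d x$. Since $|\partial_z F(x,z)|\lesssim(1+|z|)^{q-1}$, this term is under control only if $\D u\in L^q(\Omega)$ — but $u$ is merely a relaxed minimiser in $W^{1,p}(\Omega)$, and $W^{1,q}$-regularity of $u$ is precisely the conclusion of the main theorem, not an available hypothesis; your remark that ``$u\in W^{1,q}$ along the recovery sequence'' conflates $u$ with the $w_j$. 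The proposed fallback of expanding around $\D w_j$ instead does not repair this: $\partial_z F(x,\D w_j)\in L^{q'}(\Omega)$, but $\D u_\e\rightharpoonup\D u$ only weakly in $L^p$, and $L^{q'}(\Omega)\not\subset L^{p'}(\Omega)$ when $q>p$, so the limit in $\e$ of $\int_\Omega\langle\partial_z F(x,\D w_j),\D u_\e\rangle\d x$ cannot be evaluated. (Relatedly, the identity $\overline\F(u)=\F(u)$ you invoke is exactly the absence of the Lavrentiev gap at $u$, which is not known here.) The paper circumvents all of this by working at the level of the functional rather than its derivative: from \eqref{ass:elliptic} one first derives the $p$-uniform convexity inequality \eqref{eq:convexArg}, namely $\overline\F(\tfrac{w_1+w_2}{2})+\tfrac{\nu}{p}\|\D w_1-\D w_2\|_{L^p(\Omega)}^p\leq\tfrac12(\overline\F(w_1)+\overline\F(w_2))$, for $w_1,w_2\in K^{*,\psi}_g$, where everything is finite; this extends to $K^\psi_g$ by approximation and weak lower semicontinuity of the $L^p$-norm. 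Taking $w_1=u$, $w_2=u_\e$ and using minimality of $u$ together with $\overline\F(u_\e)\leq\F_\e(u_\e)\to\overline\F(u)$ gives $\|\D u-\D u_\e\|_{L^p(\Omega)}\to0$ with no integrability of $\partial_z F(x,\D u)$ ever required. You should replace your $V$-function/Euler--Lagrange argument by this uniform-convexity-of-$\overline\F$ argument.
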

\begin{proof}
Existence and uniqueness of $u_\e$ follows from the direct method and strict convexity, respectively.
We further note that 
\begin{align*}
\overline\F(u)\leq \liminf_{\e\to 0}\F(u_\e) \leq \liminf_{\e\to 0}\F_\e(u_\e).
\end{align*}
 To prove the reverse implication, we note that, for any ${v\in K^{*,\psi}_g(\Omega)}$,
\begin{align*}
\limsup_{\e \to 0} \F_\e(u_\e) \leq \lim_{\e \to 0} \F_\e(v)= \F(v)= \overline \F(v).
\end{align*}
By definition of $\overline\F(\cdot)$ the inequality above extends to all $v\in K^\psi_g(\Omega)$. In particular, it holds with the choice $v=u$. Thus $\F_\e(u_\e)\to \overline \F(u)$ as $\e\to 0$.

Using \eqref{def:bounds1}, we may extract a (non-relabelled) subsequence of $u_\e$ such that $u_\e\rightharpoonup v$ weakly in $W^{1,p}(\Omega)$ for some $v\in W^{1,p}(\Omega)$. Using our calculations above we see that $v$ is a relaxed minimiser of $\F(\cdot)$ in the class $K^\psi_g(\Omega)$. Using $\eqref{def:bounds1}$, it is easy to deduce that for every $w_1$, $w_2\in K^{*,\psi}_g$,
\begin{align}\label{eq:convexArg}
\overline \F\left(\frac{w_1+w_2}{2}\right)+\frac \nu p\|Dw_1-Dw_2\|_{L^p(\Omega)}^p\leq \frac 1 2\left(\overline \F(w_1)+\overline \F(w_2)\right).
\end{align}
Using the definition of $\overline \F(\cdot)$ and weak lower semicontinuity of norms, this estimate extends to ${w_1,\, w_2\in K^\psi_g}$. In particular, $\overline \F(\cdot)$ is convex and so $u=v$. Moreover the choice $w_1=u,\, w_2=u_\e$ in the estimate shows that $u_\e\to u$ in $W^{1,p}(\Omega)$.
\end{proof}

We comment that in the autonomous case, relaxed minimisers agree with pointwise minimiser if $g\in W^{1+1/q,q}(\Omega)$ (or alternatively $g\in W^{1,q}(\p\Omega)$) and $\psi\in W^{2,\infty}(\Omega)$. This is a direct consequence of the following observation. Some of the calculations are based on arguments in \cite{Guerra2021b}.
\begin{lemma}\label{lem:nonLavrentiev}
Suppose that $1<p\leq q<p+1$ and $F\equiv F(z)$ satisfies \eqref{ass:elliptic} and \eqref{ass:growth}. Let $\psi\in W^{2,\infty}(\Omega)$ and $g\in W^{1+1/q,q}(\Omega)$. Then, given $u\in K^\psi_g(\Omega)$, there exists a sequence of functions $u_k\in K^{*,\psi}_g$ such that $u_k\rightharpoonup u$ in $W^{1,p}(\Omega)$ and $\F(u_k)\to \F(u)$.
\end{lemma}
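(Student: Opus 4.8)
The plan is to approximate $u$ by a sequence $u_k$ that coincides with $u$ near $\partial\Omega$ (hence keeps the boundary datum $g$), lies in $W^{1,q}$, and still satisfies the constraint $u_k\geq\psi$. The natural first attempt — mollify $u$ in the interior and glue to $u$ via a cutoff, as in the proof of Lemma \ref{cor:density} — already gives $u_k\in K^{*,\psi}_g(\Omega)$ with $u_k\rightharpoonup u$ in $W^{1,p}$. The issue is upgrading weak convergence to convergence of energies $\F(u_k)\to\F(u)$. Since $F$ has only $q$-growth from above and $u$ is merely $W^{1,p}$, one cannot pass to the limit directly; the extra $\e|\D u_k|^q$-type contributions must be shown to vanish. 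This is where the hypotheses $\psi\in W^{2,\infty}$ and $q<p+1$ enter.

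The key construction I would use is a truncation/rescaling of $u$ towards the obstacle rather than a plain mollification. Consider $w_k = \psi + (1-\tfrac1k)(u-\psi)\star\phi_{\e_k}$ glued to $u$ outside a shrinking neighbourhood of $\partial\Omega$ where $u$ is already $W^{1,q}$ (using Lemma \ref{lem:diffeos} as in Lemma \ref{cor:density} to reduce to this case). The point of the factor $(1-\tfrac1k)$ and the additive $\psi$ is that $w_k\geq\psi$ holds with room to spare, and $\D w_k = \D\psi + (1-\tfrac1k)(\D u-\D\psi)\star\phi_{\e_k}$, so $\D w_k\in L^\infty + L^q$-smoothing of an $L^p$ function. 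To estimate the energy I would split $\Omega$ according to whether $|\D u|$ is large. On the set where $|\D u|\leq \lambda_k$ (a slowly growing threshold), $\D w_k$ is bounded by $C\lambda_k + \|\D^2\psi\|_\infty$ and one gets a contribution controlled via dominated convergence. On the set where $|\D u|>\lambda_k$, one uses $(p,q)$-growth: $\int_{\{|\D u|>\lambda_k\}} |\D w_k|^q \lesssim \int |\D\psi|^q + \int_{\{|\D u|>\lambda_k\}} |\mathcal{M}(\D u)|^q$ after mollification estimates, and one needs $\e_k\to0$ fast relative to $1/k$ and $\lambda_k$ so that the smoothing does not spread the bad set too much. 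The interpolation inequality \eqref{eq:interpolationInequality} together with $\D u\in L^p$ gives $\int_{\{|\D u|>\lambda_k\}}|\D u|^p \to 0$, and the gap condition $q<p+1$ is exactly what allows $\lambda_k^{q-p}\int_{\{|\D u|>\lambda_k\}}|\D u|^p$ to be made small — this is the standard De Giorgi-type truncation argument for ruling out Lavrentiev when $q<p+1$, which is why this hypothesis appears. Combined with \eqref{ass:growth}, $\F(w_k)\leq \F(u) + o(1)$; lower semicontinuity gives the matching inequality, so $\F(w_k)\to\F(u)$, and $w_k\rightharpoonup u$ in $W^{1,p}$ is immediate from the construction.

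The main obstacle is the energy estimate on the bad set $\{|\D u|>\lambda_k\}$: one must choose the three parameters — the mollification scale $\e_k$, the truncation level $\lambda_k$, and the convex-combination defect $1/k$ — in a compatible way so that (a) the constraint is preserved, (b) the smoothing of $\D u$ does not inflate the $L^q$-norm on the bad set beyond $o(1)$, and (c) the good-set contribution converges by dominated convergence. I expect steps (a) and (c) to be routine once the construction is fixed; the delicate point is (b), where one needs the maximal-function bound $\|(\D u-\D\psi)\star\phi_{\e_k}\|_{L^q(\{|\D u|>\lambda_k\})}$ to be controlled using $q<p+1$ and $\D u\in L^p$, and where the precise interplay $\e_k \ll$ (some power of $\lambda_k^{-1}$) has to be pinned down. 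I would also need to double-check that the gluing near $\partial\Omega$ — where $u$ is $W^{1,q}$ by the Lemma \ref{lem:diffeos} reduction — does not create an obstacle violation, which follows because there $u\star\phi_{\e}\geq\psi\star\phi_\e$ pointwise and the cutoff interpolates between two admissible functions that both dominate $\psi$.
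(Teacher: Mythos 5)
Your construction is essentially the paper's: reduce via Lemma \ref{lem:diffeos} to the case $u\in W^{1,q}(U)$ near $\p\Omega$, then glue $u$ to the constraint-preserving mollification $\psi+(u-\psi)\star\phi_\e$ (your factor $(1-\tfrac1k)$ is harmless but unnecessary, since $u\geq\psi$ already gives $(u-\psi)\star\phi_\e\geq 0$). The gap is in how you propose to prove $\F(u_k)\to\F(u)$.

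Your bad-set estimate does not close. You bound $\int_{\{|\D u|>\lambda_k\}}|\D w_k|^q$ by a quantity involving $\int|\mathcal M(\D u)|^q$; but $\D u$ is only in $L^p$ and $q>p$, so this integral is generically infinite. The claimed mechanism, that $q<p+1$ makes $\lambda_k^{q-p}\int_{\{|\D u|>\lambda_k\}}|\D u|^p$ small, is also false for general $W^{1,p}$ functions: if the tail decays like $\int_{\{|\D u|>\lambda\}}|\D u|^p\sim(\log\lambda)^{-1}$, then $\lambda^{q-p}\int_{\{|\D u|>\lambda\}}|\D u|^p\to\infty$ for every $q>p$. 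There is no De Giorgi truncation on level sets of $|\D u|$ hiding here, and no choice of $\e_k,\lambda_k$ will rescue the estimate as stated.

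The missing idea is convexity via Jensen's inequality, which makes all truncation unnecessary. Assuming without loss of generality that $\F(u)<\infty$, i.e.\ $F(\D u)\in L^1(\Omega)$, one has
\begin{equation*}
\int_{\{\eta=1\}}F(\D u\star\phi_\e)\d x\;\leq\;\int_{\{\eta=1\}}\bigl(F(\D u)\bigr)\star\phi_\e\d x\;\longrightarrow\;\int_{\{\eta=1\}}F(\D u)\d x,
\end{equation*}
by standard properties of mollification of $L^1$ functions. The hypothesis $q<p+1$ enters elsewhere, namely in controlling the error incurred by replacing $\psi\star\phi_\e$ with $\psi$ (which is what preserves the constraint): by the mean value theorem and \eqref{ass:growth} this error is bounded by $\int|\D(\psi\star\phi_\e-\psi)|\,(1+|\D u\star\phi_\e|+|\D(\psi-\psi\star\phi_\e)|)^{q-1}\d x$, and since $\psi\in W^{2,\infty}(\Omega)$ the first factor is uniformly small while $q-1\leq p$ makes the second factor integrable with a bound uniform in $\e$. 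The transition region $\{0<\eta<1\}$ is handled by dominated convergence using the $W^{1,q}$ regularity of $u$ there. You correctly identified the construction, the reduction, and the constraint preservation, but the step where the two hypotheses $\psi\in W^{2,\infty}$ and $q<p+1$ actually do their work is the one your argument gets wrong.
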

\begin{proof}
We utilise the notation and construction of Lemma \ref{cor:density}. We find
\begin{align*}
\int_\Omega F(\D v^s)\d x &\leq \int_\Omega F(\D u^s)\d x + \int_\Omega |\D(\psi^s-\psi)|(1+|\D\psi|+|D\psi^s|+|\D u^s|)^{q-1}\d x\\
&= A_1 + A_2.
\end{align*}
We have
\begin{align*}
A_1 = \int_\Omega |\tp{det}\Psi_s| F(\D u)\d x + \int_{\Omega^s\setminus\Omega} |\tp{det}\Psi_s| F(\D \hat g)|\d x \to \int_\Omega F(\D u)\d x<\infty
\end{align*}
as $s\to 1$. For the other term, as $q\leq p+1$ and using \eqref{ass:growth} and the mean value theorem, we have
\begin{align*}
A_2 \lesssim \|\psi\|_{W^{2,\infty}(\Omega)} (1+\|u^s\|_{W^{1,p}(\Omega)}^p+\|\psi^s-\psi\|_{W^{1,p}(\Omega)}^p) \to \|\psi\|_{W^{2,\infty}(\Omega)}(1+ \|u\|_{W^{1,p}(\Omega)}^p)
\end{align*}
Thus, by a version of dominated convergence, $\F(v^s)\to \F(u)$ as $s\to 1$. 
Hence, by a diagonal subsequence argument, as in Lemma \ref{cor:density} we may assume that $u\in W^{1,q}(U)$ for some open neighbourhood $U$ of $\p\Omega$. 

Considering $u_\e$ defind as in Lemma \ref{cor:density} we write
\begin{align*}
\int_\Omega F(\D u_\e)\d x = \int_{\{\eta = 0\}} F(\D u_\e)\d x+ \int_{\{\eta = 1\}} F(\D u_\e)\d x+ \int_{0<\eta<1} F(\D u_\e)\d x.
\end{align*}
In the region where $\eta =0$, we have $u_\e = u$. When $\eta =1$, using \eqref{ass:growth}, the mean value theorem and Jensen's inequality, we see that
\begin{align*}
&\int_{\{\eta = 1\}} F(\D u_\e)\d x \\
\leq& \int_{\{\eta=1\}} F(\D u \star \phi_\e)\d x+c \int_{\{\eta=1\}} |\D(\psi\star\phi_\e-\psi)|(1+|\D u\star\phi_\e|+|\D(\psi-\psi\star\phi_\e)|)^{q-1}\d x\\
\lesssim& \int_{\{\eta=1\}} F(\D u)\star \phi_\e\d x+\|\psi\|_{W^{2,\infty}(\Omega)}(1+\| u\star \phi_\e\|_{W^{1,p}(\Omega)}+\|\psi-\psi\star\phi_\e\|_{W^{1,p}(\Omega)})^p\\
\to& \int_{\{\eta=1\}} F(\D u)\d x+\|\psi\|_{W^{2,\infty}(\Omega)}(1+\| u\|_{W^{1,p}(\Omega)})^p.
\end{align*}
as $\e\to 0$.
By a version of dominated convergence, we deduce that
\begin{align*}
\int_{\{\eta = 1\}} F(\D u_\e)\d x\to \int_{\{\eta=1\}} F(\D u)\d x.
\end{align*}
as $\e\to 0$. It remains to deal with the region where $\eta\in(0,1)$. This region is contained in $U$ and so the only thing we need to check is that we are able to apply dominated convergence. Using \eqref{ass:growth}, we estimate
\begin{align*}
&\int_{\{0<\eta<1\}}F(\D u_\e)\d x\\
 \lesssim& \int_U 1+|\D (u\star \phi_\e-\psi\star\phi_\e+\psi)|^q+|D\eta|^q|u\star\phi_\e-\psi\star\phi_\e+\psi|^q\d x\\
\leq& \int_U 1+|\D (u\star \phi_\e-\psi\star\phi_\e+\psi)|^q+\|D\eta\|_{L^\infty(\Omega)}^q|u\star\phi_\e-\psi\star\phi_\e+\psi|^q\d x\\
\to& \int_U 1+|\D u|^q+\|D\eta\|_{L^\infty(U)} |u|^q\d x<\infty.
\end{align*}
Thus the proof is complete.
\end{proof}

Now we turn to the non-autonomous case, where we prove a similar statement to Lemma \ref{lem:nonLavrentiev}, as long as \eqref{def:changeOfXAlt} is satisfied. We begin by recalling the following key lemma from \cite{Esposito2019}, which we use to prove Lemma \ref{lem:nonlavrentievNonAuton}.
\begin{lemma}\label{lem:esposito}
Assume that $1<p\leq q\leq \frac{(n+\alpha)p}{n}$. Let $\Omega$ be a domain and $u\in W^{1,p}(\Omega)$. Suppose that $F(x,\cdot)$ satisfies \eqref{ass:elliptic}, \eqref{ass:growth}, \eqref{def:bounds3} and \eqref{def:changeOfXAlt}. 
Then, for $x\in\Omega$ and with ${\e\leq \min(\e_0,d(x,\p\Omega))}$,
\begin{align*}
F(x,Du(\cdot)\star \phi_\e(x))\lesssim 1 + \big(F(\cdot,Du(\cdot))\star \phi_\e\big)(x).
\end{align*}
\end{lemma}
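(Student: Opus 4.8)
The plan is to exploit the minimality of the special point $\hat y$ from hypothesis \eqref{def:changeOfXAlt} together with Jensen's inequality for the convex function $F(\hat y,\cdot)$ and the $\alpha$-H\"older control \eqref{def:bounds3}. Fix $x\in\Omega$ and $\e\leq\min(\e_0,d(x,\p\Omega))$, and let $\hat y=\hat y(x,\e)\in\overline{B_\e(x)\cap\Omega}$ be the point provided by \eqref{def:changeOfXAlt}, so that $F(\hat y,z)\leq F(y,z)$ for all $y\in\overline{B_\e(x)}$ and all $z$. Since $\phi_\e$ is supported in $B_\e(0)$ and has unit mass, $Du(\cdot)\star\phi_\e(x)$ is an average of the values $Du(y)$ over $y\in B_\e(x)$, so convexity of $z\mapsto F(\hat y,z)$ and Jensen's inequality give
\begin{align*}
F(\hat y, Du(\cdot)\star\phi_\e(x))\leq \big(F(\hat y, Du(\cdot))\star\phi_\e\big)(x)\leq \big(F(\cdot,Du(\cdot))\star\phi_\e\big)(x),
\end{align*}
where the last step uses $F(\hat y,z)\leq F(y,z)$ pointwise in the convolution integral.

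Next I would restore the correct base point $x$ on the left-hand side using \eqref{def:bounds3}: since $|x-\hat y|\leq\e\leq\diam\Omega$,
\begin{align*}
F(x, Du(\cdot)\star\phi_\e(x))\leq F(\hat y, Du(\cdot)\star\phi_\e(x)) + \Lambda|x-\hat y|^\alpha\big(1+|Du(\cdot)\star\phi_\e(x)|^2\big)^\frac q2.
\end{align*}
The first term is handled by the previous paragraph. For the error term, I would use \eqref{def:lower} (i.e. \eqref{def:bounds1}, which gives $F(\hat y,z)\gtrsim |z|^p-1$, hence after also invoking \eqref{ass:growth} one controls $|z|^q\lesssim 1+F(\hat y,z)$ whenever $q\leq \frac{(n+\alpha)p}{n}$ — actually the cleaner route is to absorb $\e^\alpha|z|^q$ directly), combined with Young's inequality to write $\e^\alpha|Du\star\phi_\e|^q \lesssim \e^\alpha|Du\star\phi_\e|^p\cdot|Du\star\phi_\e|^{q-p}$ and bound the last factor by a negative power of $\e$ coming from the convolution estimate $|Du\star\phi_\e(x)|\lesssim \e^{-n/p}\|Du\|_{L^p(B_\e(x))}\lesssim\e^{-n/p}$ on the relevant scale; the exponent bookkeeping closes precisely because $q-p\leq \frac{\alpha p}{n}$, i.e. $\alpha - \frac{n(q-p)}{p}\geq 0$, so that $\e^{\alpha}\e^{-n(q-p)/p}\lesssim 1$.

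The main obstacle I anticipate is precisely this last balancing of powers of $\e$: one has to convert the H\"older error $\Lambda\e^\alpha(1+|Du\star\phi_\e(x)|^2)^{q/2}$ into the claimed form $1 + (F(\cdot,Du)\star\phi_\e)(x)$ without picking up any unbounded constant as $\e\to 0$, and this is exactly where the sharp gap condition $q\leq\frac{(n+\alpha)p}{n}$ enters. I would carry out the steps in the order: (1) apply \eqref{def:changeOfXAlt} to get $\hat y$; (2) Jensen plus the minimality of $\hat y$ to bound $F(\hat y, Du\star\phi_\e(x))$ by $(F(\cdot,Du)\star\phi_\e)(x)$; (3) switch base point via \eqref{def:bounds3}; (4) estimate the resulting H\"older error using the standard convolution bound for $|Du\star\phi_\e(x)|$, the lower bound \eqref{def:lower}, Young's inequality, and the gap condition to absorb everything into $1 + (F(\cdot,Du)\star\phi_\e)(x)$. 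Since this lemma is quoted verbatim from \cite{Esposito2019}, I would in fact simply cite it, but the sketch above is the argument underlying it.
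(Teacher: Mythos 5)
The paper does not actually prove this lemma---it is recalled verbatim from \cite{Esposito2019}---and your sketch reproduces the standard argument from that reference correctly: Jensen's inequality for the convex map $F(\hat y,\cdot)$ combined with the minimality of $\hat y$ from \eqref{def:changeOfXAlt}, the switch of base point via \eqref{def:bounds3}, and the absorption of the error term $\e^\alpha\big(1+|Du\star\phi_\e(x)|^2\big)^{q/2}$ using $|Du\star\phi_\e(x)|\lesssim\e^{-n/p}\|Du\|_{L^p(B_\e(x))}$, the exponent balance $q-p\leq\alpha p/n$, Jensen again for $|Du\star\phi_\e(x)|^p\lesssim(|Du|^p\star\phi_\e)(x)$, and \eqref{def:lower}. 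The only point worth making explicit is that the implicit constant produced this way also depends on $\|Du\|_{L^p(\Omega)}^{q-p}$ (and on $q$, $\alpha$), which is harmless for the way the lemma is used in Lemma \ref{lem:nonlavrentievNonAuton} where $u$ is fixed.
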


\begin{lemma}\label{lem:nonlavrentievNonAuton}
Let $\alpha\in(0,1)$. Suppose that $F(x,z)$ satisfies \eqref{ass:elliptic}, \eqref{ass:growth}, \eqref{def:bounds3} and \eqref{def:changeOfXAlt} and that $1<p\leq q<\min(p+1,\frac{(n+\alpha)p}{n})$. Let $\psi\in W^{2,\infty}(\Omega)$ and $g\in W^{1+1/q,q}(\Omega)$. Given $u\in K^\psi_g(\Omega)$ there exists a sequence of functions $u_k\in K^{*,\psi}_g$ such that $u_k\rightharpoonup u$ in $W^{1,p}(\Omega)$ and a sequence of integrands $F^k(x,z)$ satisfying \eqref{ass:elliptic}, \eqref{ass:growth}, \eqref{def:bounds3} and \eqref{def:changeOfXAlt}, with bounds independent of $k$, such that $\int_\Omega F^k(x,\D u^k)\d x\to \F(u)$ as $k\to\infty$.
\end{lemma}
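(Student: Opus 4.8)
The plan is to follow the two--step scheme of Lemma~\ref{lem:nonLavrentiev} --- first straightening out the behaviour near $\p\Omega$, then mollifying in the interior --- but, in the non-autonomous case, to replace $F$ in the competitor's energy by a sequence of auxiliary integrands tailored so that Jensen's inequality can be applied against the mollified competitor. I would first note that we may assume $\F(u)=\int_\Omega F(x,\D u)\d x<\infty$: otherwise $\overline\F_X(u)=\F(u)=+\infty$, the Lavrentiev gap functional vanishes by definition, and any construction below yields (via Fatou's lemma) a sequence with $\int_\Omega F^k(x,\D u_k)\d x\to+\infty=\F(u)$. For the first step one argues exactly as in Lemma~\ref{cor:density} and the first part of Lemma~\ref{lem:nonLavrentiev}: with the diffeomorphisms $\Psi_s$ of Lemma~\ref{lem:diffeos} and $v^s=u^s-\psi^s+\psi$ one gets $v^s\in K^\psi_g(\Omega)$ with $v^s\to u$ in $W^{1,p}(\Omega)$, $v^s\in W^{1,q}(U)$ for a fixed neighbourhood $U$ of $\p\Omega$, and $\int_\Omega F(x,\D v^s)\d x\to\F(u)$ --- on the bulk $\Psi_s$ is the identity so the integrand is unchanged there, on the boundary layer $v^s$ is $W^{1,q}$ so \eqref{ass:growth} and dominated convergence apply, and the correction $\psi-\psi^s$ is handled by the mean value theorem, \eqref{ass:growth}, $q\le p+1$ and $\psi\in W^{2,\infty}$ exactly as for the term $A_2$ there. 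After a diagonal argument we may thus assume $u\in W^{1,q}(U)$.

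Next I would build the integrands. Fix $\e_k\downarrow 0$ with $\e_k<\e_0$ and $\e_k<\dist(\Omega\setminus U,\p\Omega)$, and using \eqref{def:changeOfXAlt} and the extension of $F$ from Section~\ref{sec:lipschitzDomains} select, measurably in $x$, a point $\hat y_k(x)\in\overline{B_{\e_k}(x)\cap\Omega}$ with $F(\hat y_k(x),z)\le F(y,z)$ for every $y\in\overline{B_{\e_k}(x)\cap\Omega}$ and every $z$; set $F^k(x,z):=F(\hat y_k(x),z)$. Because $F^k(x,\cdot)$ is a point evaluation of $F$, conditions \eqref{ass:elliptic} and \eqref{ass:growth} hold with unchanged constants; from $|\hat y_k(x)-\hat y_k(x')|\le|x-x'|+2\e_k$ one gets \eqref{def:bounds3} up to an error $\lesssim\e_k^\alpha(1+|z|^2)^{q/2}\to0$ (and, mollifying $F^k$ in $x$ at scale $\ll\e_k$, its literal form); and \eqref{def:changeOfXAlt} holds for $F^k$ with $\e_0$ replaced by $\e_0-\e_k$, since the \eqref{def:changeOfXAlt}--minimiser of $F$ over $\overline{B_{\delta+\e_k}(x)\cap\Omega}$ lies in some $\overline{B_{\e_k}(\hat z)\cap\Omega}$ with $\hat z\in\overline{B_\delta(x)\cap\Omega}$ and hence realises $F^k(\hat z,\cdot)$. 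Crucially $F^k\to F$ locally uniformly in $z$, and $F^k(x,z)\le F(y,z)$ whenever $y\in\overline{B_{\e_k}(x)\cap\Omega}$.

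For the competitors, as in Lemma~\ref{cor:density} take $\eta\in C^\infty_c(\Omega)$ with $\eta\equiv1$ on $\Omega\setminus U$ and $\{\eta<1\}$ relatively compact in $U$, and put $u_k=\eta(u\star\phi_{\e_k}-\psi\star\phi_{\e_k}+\psi)+(1-\eta)u$; since $u\ge\psi$ forces $u\star\phi_{\e_k}\ge\psi\star\phi_{\e_k}$, $u_k\in K^{*,\psi}_g(\Omega)$ and $u_k\to u$ in $W^{1,p}(\Omega)$. I would then split $\int_\Omega F^k(x,\D u_k)\d x$ over $\{\eta<1\}\subset U$ and $\{\eta=1\}=\Omega\setminus U$. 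On $\{\eta<1\}$ every function is $W^{1,q}$ with $\D u_k\to\D u$ in $L^q$, so $F^k(x,\D u_k)\to F(x,\D u)$ a.e., dominated by $\Lambda(1+|\D u_k|^q)$ which converges in $L^1(\{\eta<1\})$, and generalised dominated convergence gives the desired limit. On $\{\eta=1\}$, where $u_k=u\star\phi_{\e_k}-\psi\star\phi_{\e_k}+\psi$, the mean value theorem with $q\le p+1$, $\psi\in W^{2,\infty}$ and $\|u\star\phi_{\e_k}\|_{W^{1,p}}\lesssim\|u\|_{W^{1,p}}$ bounds the energy by $\int_{\{\eta=1\}}F^k(x,\D u\star\phi_{\e_k})\d x$ plus a term $\lesssim\e_k\|\psi\|_{W^{2,\infty}}(1+\|\D u\|^{q-1}_{L^p(\Omega)})\to0$; and for $x\in\Omega\setminus U$, using $\e_k<\dist(\Omega\setminus U,\p\Omega)$, convexity of $F^k(x,\cdot)$ and $F^k(x,z)\le F(y,z)$ for $y\in B_{\e_k}(x)$, Jensen's inequality yields $F^k(x,\D u\star\phi_{\e_k}(x))\le(F(\cdot,\D u)\star\phi_{\e_k})(x)$, whose integral converges to $\int_{\Omega\setminus U}F(x,\D u)\d x$ since $F(\cdot,\D u)\in L^1(\Omega)$. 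Thus $\limsup_k\int_\Omega F^k(x,\D u_k)\d x\le\F(u)$, while Fatou's lemma applied to $F^k(x,\D u_k(x))\to F(x,\D u(x))$ a.e. (along a subsequence) together with $F^k\ge-1$ gives the matching $\liminf$; hence $\int_\Omega F^k(x,\D u_k)\d x\to\F(u)$.

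A diagonal argument then merges the boundary reduction with this construction into single sequences $u_k\in K^{*,\psi}_g(\Omega)$ and $F^k$ with $u_k\rightharpoonup u$ in $W^{1,p}(\Omega)$ and $\int_\Omega F^k(x,\D u_k)\d x\to\F(u)$. I expect the main obstacle to be the construction of the $F^k$: making them simultaneously pair with the mollification through Jensen's inequality (which is exactly why \eqref{def:changeOfXAlt} is needed), converge to $F$, and retain \eqref{ass:elliptic}, \eqref{ass:growth}, \eqref{def:bounds3} and \eqref{def:changeOfXAlt} with constants uniform in $k$ --- in particular the measurable selection of the minimising point and the stability of \eqref{def:changeOfXAlt} under passing to $F^k$. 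The remaining global control of the $q$--growth error terms is routine, with $q\le p+1$ and $\psi\in W^{2,\infty}$ doing the work (and the gap $q<\tfrac{(n+\alpha)p}{n}$ entering if one instead routes the interior estimate through Lemma~\ref{lem:esposito}).
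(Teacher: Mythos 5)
Your first (boundary-straightening) step contains a genuine gap, and it is exactly the point that forces the paper to change the integrand. You claim that $\int_\Omega F(x,\D v^s)\d x\to\F(u)$ for the \emph{original} non-autonomous $F$, arguing as in Lemma \ref{lem:nonLavrentiev}. But $\D u^s(x)=\D u(\Psi_s^{-1}(x))\,\D\Psi_s^{-1}(x)$, and there is necessarily an intermediate collar on which $\Psi_s$ is not the identity while $\Psi_s^{-1}(x)$ still lies in $\Omega$, where $u$ is only $W^{1,p}$. Comparing $F(x,\D u^s(x))$ with $F(\Psi_s^{-1}(x),\D u(\Psi_s^{-1}(x)))$ there costs, by \eqref{def:bounds3} alone, an error of order $|x-\Psi_s^{-1}(x)|^\alpha\big(1+|\D u|^2\big)^{q/2}$, which need not be integrable for $u\in W^{1,p}(\Omega)$; unlike the autonomous case, there is no smallness in the $z$-slot to compensate. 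This is why the paper's proof defines the pulled-back integrand $F^s(x,z)=F\big(\Psi_s^{-1}(x),z\D\Psi_s(\Psi_s^{-1}(x))\big)$, for which $\int_\Omega F^s(x,\D u^s)\d x$ equals $\int_{\Omega^s}F(y,\D u)\,|\det \D\Psi_s|\d y$ exactly by change of variables --- the pullback, not the mollification, is the real reason the lemma is stated with a sequence of integrands. Your construction must incorporate it.

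For the interior step you take a genuinely different route: instead of keeping $F^s$ and invoking Lemma \ref{lem:esposito} to produce the dominating function $1+\big(F(\cdot,\D u)\star\phi_\e\big)(x)$, you replace the integrand by $F^k(x,z)=F(\hat y_k(x),z)$ so that Jensen's inequality becomes exact. This is workable in spirit (it is essentially how Lemma \ref{lem:esposito} is proved), but it leaves loose ends that the paper's route avoids: $\hat y_k$ must be selected measurably; and $F^k$ satisfies \eqref{def:bounds3} only up to an additive $\e_k^\alpha(1+|z|^2)^{q/2}$, i.e.\ not in the uniform-in-$k$ form required when these integrands are later fed into the difference-quotient argument of Proposition \ref{prop:nonauton}, while mollifying $F^k$ in $x$ to repair this perturbs the exact Jensen pairing you rely on. I would recommend adopting the paper's combination --- pull back the integrand for the boundary layer, keep it fixed for the mollification, and use Lemma \ref{lem:esposito} with generalised dominated convergence --- rather than re-engineering the integrand a second time.
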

\begin{proof}
We proceed along the same lines as in Lemma \ref{lem:nonLavrentiev}. Let $s\in(1/2,1)$ and extend $u$ by $\hat g$ and $\psi$ by $\hat \psi$ to $\Omega^s$. Using the notation of Lemma \ref{lem:diffeos}, we then define
\begin{align*}
u^s(x)= u(\Psi_s^{-1}(x)), \qquad \psi^s(x) = \psi(\Psi_s^{-1}(x))\\
F^s(x,z)=F\left(\Psi_s^{-1}(x),z D\Psi_s(\Psi_s^{-1}(x))\right),
\end{align*}
and denote $v^s = u^s-\psi^s+\psi$. We have $v^s\in K^\psi_g(\Omega)$ and clearly $v^s\to v$ in $W^{1,p}(\Omega)$ as $s\to 1$. It is straightforward to check that $F^s(x,z)$ satisfies the required assumptions and we can argue exactly as in Lemma \ref{lem:nonLavrentiev}
to see that it suffices to find $(u_j)\subset K^{*,\psi}_g(\Omega)$ such that $\int_\Omega F^s(x,\D u_j)\d x\to \int_\Omega F^s(x,\D u)\d x$ as $j\to\infty$.

 Let $\eta\in C_c^\infty(\Omega)$ be a smooth cut-off function with $\eta=1$ in $\Omega\setminus U$. We then consider
\begin{align*}
u_\e = \eta (u\star \phi_\e-\psi\star \phi_\e+\psi)+(1-\eta) u.
\end{align*}
The estimate proceeds exactly as in Lemma \ref{lem:nonLavrentiev}, the only difference being the estimate in the region where $\eta =1$. Here, we argue using \eqref{ass:growth}, the mean value theorem, Jensen's inequality and Lemma \ref{lem:esposito}, to see that
\begin{align*}
&\int_{\{\eta = 1\}} F^s(x,\D u_\e)\d x \\
\leq& \int_{\{\eta=1\}} F^s(x,\D u \star \phi_\e)\d x\\
&\quad+c \int_{\{\eta=1\}} |\D(\psi\star\phi_\e-\psi)|(1+|\D u\star\phi_\e|+|\D(\psi-\psi\star\phi_\e)|)^{q-1}\d x\\
\lesssim& \int_{\{\eta=1\}}1+ F(\cdot,\D u(\cdot))\star \phi_\e(x)\d x\\
&\quad+\|\psi\|_{W^{2,\infty}(\Omega)}(1+\|u\star \phi_\e\|_{W^{1,p}(\Omega)}+\|\psi-\psi\star\phi_\e\|_{W^{1,p}(\Omega)})^p\\
\to& \int_{\{\eta=1\}} 1+F^s(x,\D u)\d x+\|\psi\|_{W^{2,\infty}(\Omega)}(1+\| u\|_{W^{1,p}(\Omega)})^p.
\end{align*}
as $\e\to 0$.
By a version of dominated convergence, we deduce that
\begin{align*}
\int\Omega F^s(x,\D u_\e)\d x\to \int_\Omega F^s(x,\D u)\d x,
\end{align*}
as $\e\to 0$. 
\end{proof}

\section{Exactness of \texorpdfstring{$L^1$}{}-penalisation}\label{sec:penalty}
In this section we prove that $L^1$-penalisation of the obstacle problem \eqref{obstacleProblem} is exact. A version of the result can be found in \cite{Schaeffer2018} but we present a full argument for completeness. We consider the functional
\begin{align}\label{L1Obstacle}
\tilde\F(v)=\int_\Omega F(\D v)+\kappa |(\psi-v)_+|\d x.
\end{align}
and prove that for sufficiently large $\kappa>0$, pointwise minimisers $\tilde u$ of $\tilde \F(\cdot)$ satisfy $\tilde u\geq \psi$ a.e. in $\Omega$. Here and throughout this section, we denote ${(\psi-\tilde u)_+ = ((\psi_i-\tilde u_i)_+)}$.  In particular, pointwise minimisers of \eqref{obstacleProblem} and \eqref{L1Obstacle} agree. Consequently, we will prefer to work with the unconstrained functional $\tilde \F(\cdot)$ rather than the original functional $\F(\cdot)$.

\begin{proposition}[c.f.\cite{Schaeffer2018}]\label{prop:exact}
Suppose that $\Omega$ is a Lipschitz domain.
Let $\psi\in W^{2,\infty}(\Omega)$ and $g\in W^{1,p}(\Omega)$ be given such that $\psi\geq g$ on $\p\Omega$ in the sense of traces. Suppose ${F\equiv F(x,z)\colon \Omega\times \R^{N\times n}\to \R}$ is measurable in $x$ and $C^2$ in $z$, \eqref{ass:elliptic} holds and there exists $u_0\in W^{1,p}_g(\Omega)$ such that $\int_\Omega F(x,\D u_0)\d x<\infty$. Then there exists $\kappa_0=\kappa_0(\|\psi\|_{W^{2,\infty}(\Omega)})$ such that, for $\kappa>\kappa_0$, the minimiser $\tilde u$ of the functional $\tilde \F(\cdot)$
in the class $W^{1,p}_g(\Omega)$ satisfies $u\geq \psi$ a.e. in $\Omega$.
\end{proposition}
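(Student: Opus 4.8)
The plan is to exploit that $\tilde u$ minimises $\tilde\F(\cdot)$ by comparing it with the competitor $w = \max(\tilde u, \psi)$, taken row-wise, i.e. $w_i = \max(\tilde u_i, \psi_i)$. Since $\psi \geq g$ on $\partial\Omega$ in the sense of traces and $\tilde u = g$ there, we have $w = g$ on $\partial\Omega$, so $w \in W^{1,p}_g(\Omega)$; moreover $\int_\Omega F(x, Dw)\,dx < \infty$ will follow a posteriori from the estimate (or can be checked directly using $Dw = D\psi$ on $\{\tilde u < \psi\}$ and $\psi \in W^{2,\infty}$, together with $F(x, D\tilde u) \in L^1$). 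Write $E = E_i = \{x \in \Omega : \tilde u_i(x) < \psi_i(x)\}$ for the (row-wise) contact-violation set; on $E_i$ we have $w_i = \psi_i$ and $(\psi_i - w_i)_+ = 0$, while off $E_i$ we have $w_i = \tilde u_i$. The goal is to show $|E| = 0$ for $\kappa$ large.

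The key step is the differential inequality coming from minimality: $\tilde\F(\tilde u) \leq \tilde\F(w)$ gives
\begin{align*}
\int_\Omega F(x, D\tilde u)\,dx + \kappa \int_\Omega |(\psi - \tilde u)_+|\,dx \leq \int_\Omega F(x, Dw)\,dx + \kappa \int_\Omega |(\psi - w)_+|\,dx.
\end{align*}
The second term on the right vanishes. For the $F$-terms, split each integral over $\{D\tilde u = Dw\}$ (where they cancel) and over the union of the sets $E_i$. On $E_i$, the $i$-th row of $Dw$ equals $D\psi_i$. Using convexity of $F(x,\cdot)$ — more precisely the strong convexity/monotonicity encoded in \eqref{def:bounds1} applied row-wise, interpolating between $D\tilde u$ and the matrix obtained from $D\tilde u$ by replacing its $i$-th row — one obtains, after summing over $i$,
\begin{align*}
\sum_i \int_{E_i} \big( F(x, D\tilde u) - F(x, Dw) \big)\,dx \gtrsim -\sum_i \int_{E_i} \big(1 + |D\tilde u|^2 + |D\psi|^2\big)^{\frac{p-2}{2}} \big|D(\tilde u_i - \psi_i)\big|^2\,dx + (\text{good terms}),
\end{align*}
but the cleaner route is to bound $F(x, Dw) - F(x, D\tilde u)$ from above on $E_i$ directly by $\langle \partial_z F(x, Dw), Dw - D\tilde u\rangle$ (convexity) and then integrate by parts. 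This is where $\psi \in W^{2,\infty}$ enters: writing the $i$-th component, on $E_i$ one has $\partial_z F \cdot (D\psi_i - D\tilde u_i)$, and since $\tilde u_i - \psi_i = 0$ on $\partial E_i$ (by continuity of the precise representative, or in the Sobolev sense since $(\psi_i - \tilde u_i)_+ \in W^{1,p}_0$ of the relevant set), integrating by parts moves a derivative onto $\partial_z F(x, Dw)$. But $\partial_z F(x, Dw)$ need not be weakly differentiable; instead the standard trick is to bound $|\partial_z F(x, z)| \lesssim (1 + |z|^2)^{\frac{q-1}{2}}$ (from \eqref{ass:growth} and convexity via a Cauchy-type estimate $F(x, 2z) - F(x,z) \geq \langle \partial_z F(x,z), z\rangle$) and then estimate the boundary/penalty balance more crudely. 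Honestly the robust argument: use $F(x, Dw) - F(x, D\tilde u) \leq \langle \partial_z F(x,Dw), Dw - D\tilde u\rangle$, note $Dw - D\tilde u = \sum_i (D\psi_i - D\tilde u_i) \mathbf{1}_{E_i} e_i$ (row-wise), and on $E_i$ bound $|\partial_z F(x, Dw)| \lesssim 1 + |D\psi|^{q-1}$ since on $E_i$ the $i$-th row of $Dw$ is $D\psi_i$ and… no — the other rows of $Dw$ are rows of $D\tilde u$, which is not bounded.

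The correct and clean device — the one I expect the author uses — is: on $E_i$, $Dw$ and $D\tilde u$ agree in all rows except row $i$; by \eqref{def:bounds1} (row-wise monotonicity of $\partial_z F$ in that single row, after subtracting the $\lambda(\mu^2 + |z|^2)^{p/2}$ part which is itself monotone), $\langle \partial_z F(x, Dw) - \partial_z F(x, D\tilde u), Dw - D\tilde u\rangle \geq 0$, hence $F(x,Dw) - F(x,D\tilde u) \leq \langle \partial_z F(x, D\tilde u), Dw - D\tilde u\rangle$, and now $\partial_z F(x, D\tilde u)$ paired against $D(w - \tilde u)$ which is supported on $\bigcup E_i$ and equals $D(\psi_i - \tilde u_i)$ there. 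Testing the Euler–Lagrange inequality for $\tilde u$ against the admissible variation $-(\psi - \tilde u)_+ = w - \tilde u \in W^{1,p}_0$ direction (this is the genuinely available information, since $\tilde\F$ is only one-sidedly smooth due to the $|\cdot|_+$ term): minimality in direction $t(w - \tilde u)$, $t \in [0,1]$, gives $\int_\Omega \langle \partial_z F(x, D\tilde u), D(w - \tilde u)\rangle\,dx + \kappa \frac{d}{dt}\big|_{t=0^+} \int |(\psi - \tilde u - t(w-\tilde u))_+| \geq 0$, and the penalty derivative is exactly $-\kappa \int_\Omega |(\psi - \tilde u)_+ / |(\psi-\tilde u)_+|| \cdot |(\psi-\tilde u)_+|$-type term $= -\kappa \sum_i |E_i \cap \{\cdots\}|$-measure, i.e. $-\kappa \int_{\bigcup E_i} (\text{number of violated rows})$. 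Thus
\begin{align*}
\kappa \, |\{x : \tilde u(x) \not\geq \psi(x)\}|_{\#} \leq \int_{\bigcup_i E_i} \langle \partial_z F(x, D\tilde u), D(\psi - \tilde u)_+\rangle\,dx,
\end{align*}
and on $\bigcup E_i$ we integrate by parts to land a derivative on nothing — instead use that on $E_i$, $\nabla(\psi_i - \tilde u_i) = \nabla \psi_i - \nabla \tilde u_i$, but more usefully: $D(\psi - \tilde u)_+$ restricted to $E_i$ equals $D\psi_i - D\tilde u_i$; we cannot integrate by parts against $\partial_z F(x, D\tilde u)$ which has no regularity.

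**Main obstacle.** The real difficulty — and the crux of the proof — is precisely converting $\int_{\{\tilde u < \psi\}} \langle \partial_z F(x, D\tilde u), D\psi - D\tilde u\rangle$ into something controlled by $\|\psi\|_{W^{2,\infty}}$ times $|\{\tilde u < \psi\}|$, independently of the (unbounded, unknown-regularity) gradient $D\tilde u$. The resolution I expect: use minimality again, now comparing $\tilde u$ with $w$ on the full functional rather than the linearised one, combined with \eqref{def:bounds1} to get a quantitative lower bound
\begin{align*}
\int_\Omega F(x, D\tilde u) - F(x, Dw)\,dx \geq c \int_{\bigcup E_i} (\mu^2 + |D\tilde u|^2 + |D\psi|^2)^{\frac{p-2}{2}} |D\tilde u - D\psi|^2_{\text{(row $i$ part)}}\,dx - \int_{\bigcup E_i} \langle \partial_z F(x, Dw), D\psi - D\tilde u\rangle\,dx,
\end{align*}
and then the second term is handled by integration by parts now against $\partial_z F(x, Dw)$ where on $E_i$ the $i$-th row of $Dw$ is $D\psi_i$ — but the off-diagonal rows are still $D\tilde u$. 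So one must further use that the variation only touches row $i$: define $F_i(x, \xi) = F(x, \text{($D\tilde u$ with row $i$ replaced by $\xi$)})$ for $\xi \in \R^n$, a scalar convex integrand in $\xi$ with the $i$-th row of $\partial_z F$ as gradient; on $E_i$, $\psi_i - \tilde u_i \in W^{1,p}_0(E_i)$ and $\tilde u_i$ minimises a scalar obstacle-type problem, reducing to the well-understood scalar case where the classical argument (test with $(\psi_i - \tilde u_i)_+$, integrate by parts using $\Delta$-type operator bounds, and absorb using $\|D^2\psi_i\|_{L^\infty}$) yields $\kappa_0 \sim \|\psi\|_{W^{2,\infty}}$ — cf. the references \cite{Tran2015,Schaeffer2018}. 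I would therefore organise the write-up as: (1) admissibility of $w = \max(\tilde u,\psi)$; (2) row-wise reduction to a scalar problem on each $E_i$ via the one-variable convex integrand $F_i$; (3) the scalar exactness estimate (integration by parts + $W^{2,\infty}$ bound + \eqref{ass:elliptic}), citing \cite{Schaeffer2018,Tran2015}; (4) conclude $|E_i| = 0$ for $\kappa > \kappa_0(\|\psi\|_{W^{2,\infty}})$, hence $\tilde u \geq \psi$ a.e.
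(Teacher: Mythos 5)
Your choice of competitor $w=\tilde u+(\psi-\tilde u)_+=\max(\tilde u,\psi)$ and the convexity inequality $F(x,\D w)\le F(x,\D\tilde u)+\langle\partial_z F(x,\D w),\D w-\D\tilde u\rangle$ (gradient evaluated at the \emph{competitor}) are exactly the paper's starting point, but you never close the argument: you abandon the integration by parts on the grounds that ``$\partial_z F(x,\D w)$ need not be weakly differentiable'', and your final plan outsources the decisive estimate to the cited references. The point you miss is that $\D w-\D\tilde u=\D(\psi-\tilde u)_+$ is supported precisely where $w=\psi$, so in the pairing one may replace $\partial_z F(x,\D w)$ by $\partial_z F(x,\D\psi)$ --- and \emph{this} field is controlled: since $F$ is $C^2$ in $z$ and $\psi\in W^{2,\infty}(\Omega)$, one has $\operatorname{div}(\partial_z F(x,\D\psi))\in L^\infty(\Omega)$. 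Integrating by parts (legitimate because $(\psi-\tilde u)_+\in W^{1,p}_0(\Omega)$) then gives
\begin{align*}
\int_\Omega \langle\partial_z F(x,\D\psi),\D(\psi-\tilde u)_+\rangle\d x\le\big\|\operatorname{div}(\partial_z F(x,\D\psi))\big\|_{L^\infty(\Omega)}\int_\Omega|(\psi-\tilde u)_+|\d x,
\end{align*}
so that $\tilde\F(w)\le\int_\Omega F(x,\D\tilde u)\d x+\kappa_0\int_\Omega|(\psi-\tilde u)_+|\d x$ with $\kappa_0=\|\operatorname{div}(\partial_z F(x,\D\psi))\|_{L^\infty}$; comparing with $\tilde\F(\tilde u)\le\tilde\F(w)$ forces $\int_\Omega|(\psi-\tilde u)_+|\d x=0$ once $\kappa>\kappa_0$. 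No Euler--Lagrange variation, no row-wise scalar reduction, and no use of \eqref{def:bounds1} beyond plain convexity is needed. Note also that the inequality you single out as ``the correct and clean device'', namely $F(x,\D w)-F(x,\D\tilde u)\le\langle\partial_z F(x,\D\tilde u),\D w-\D\tilde u\rangle$, has the wrong sign: convexity with the gradient at the base point $\D\tilde u$ gives a \emph{lower} bound for that difference, which is exactly why the gradient must be evaluated at $\D w$.

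Two further remarks. First, your objection that on $E_i$ only the $i$-th row of $\D w$ equals $\D\psi_i$ while the remaining rows may be rows of $\D\tilde u$ is a legitimate observation about the genuinely vectorial case with distinct sets $E_i$ (the paper's one-line substitution of $\partial_z F(x,\D\psi)$ for $\partial_z F(x,\D w)$ is only immediate when $N=1$ or the $E_i$ coincide); but having raised it you must resolve it, and your proposed fix --- freezing the other rows at $\D\tilde u$ and treating a scalar problem with integrand $F_i$ --- reintroduces coefficients depending on the unknown $\D\tilde u$ and therefore cannot be integrated by parts either. Second, your admissibility argument for $w$ is inconsistent as written: if $\psi\ge g=\tilde u$ on $\partial\Omega$ then $\max(\tilde u,\psi)=\psi$ there, not $g$; the condition actually needed (and assumed throughout the paper so that $K^\psi_g(\Omega)\neq\emptyset$) is $g\ge\psi$ on $\partial\Omega$ in the sense of traces, which is what makes $(\psi-\tilde u)_+$ vanish on the boundary.
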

\begin{proof}
Fix $\kappa>0$, the value of which is yet to be determined.
By the direct method, a minimiser $\tilde u$ of $\tilde F(v)$ in the class $W^{1,p}_g(\Omega)$ exists and is unique.

Introduce
\begin{align*}
w = \tilde u+ (\psi-\tilde u)_+.
\end{align*}
Note that $w\in W^{1,p}_g(\Omega)$ as $\tilde u \geq \psi$ on $\p\Omega$ in the sense of traces. Thus, using \eqref{def:bounds1}, we see that
\begin{align*}
\tilde\F(w)= \int_\Omega F(x,\D w)\d x \leq& \int_\Omega F(x,\D \tilde u) - \p_z F(x,\D w)\cdot \D(x, \tilde u-w)\d x\\
=& \int_\Omega F(x,\D \tilde u)+\p_z F(x,\D w)\cdot \D((\psi-\tilde u)_+)\d x = A
\end{align*}
However, $\D(\psi-\tilde u)_+=0$ on $\{\psi\leq \tilde u\}$ and $\D(\psi-\tilde u)_+= \D \psi-\D \tilde u$ on $\{\psi\geq \tilde u\}$. Furthermore, $w=\psi$ on $\{\psi \geq \tilde u\}$. Hence, using integration by parts, we find that
\begin{align*}
A &= \int_\Omega F(x,\D \tilde u)+\p_z F(x,\D\psi)\cdot \D((\psi-\tilde u)_+)\d x\\
&= \int_\Omega F(x,\D \tilde u)-\tp{div}( \p_z F(x,\D \psi))(\psi-\tilde u)_+\d x\\
&\leq \int_\Omega F(x,\D \tilde u)+\|\tp{div}(\p_z F(x,D\psi))\|_{L^\infty(\Omega)}\int_\Omega |(\psi-\tilde u)_+|\d x.
\end{align*}
Thus if $\kappa > \kappa_0=\|\tp{div}(\p_z F_\e(\D\psi))\|_{L^\infty(\Omega)}$ the claim follows.
\end{proof}

\begin{cor}
Suppose that the hypothesis of Proposition \ref{prop:exact} hold. Then, if $\kappa>\kappa_0$, we have
\begin{align*}
\inf_{u\in K^\psi_g(\Omega)} \F(u) = \inf_{u\in W^{1,p}_g(\Omega)} \tilde \F(u).
\end{align*}
Further minimisers exist and agree.
\end{cor}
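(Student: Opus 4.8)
The corollary is a bookkeeping consequence of Proposition \ref{prop:exact} together with the definition of the penalised functional, so the plan is to unwind the definitions rather than to prove anything substantially new.

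First I would record the elementary inequality
\begin{align*}
\inf_{u\in W^{1,p}_g(\Omega)}\tilde\F(u)\leq \inf_{u\in K^\psi_g(\Omega)}\tilde\F(u)=\inf_{u\in K^\psi_g(\Omega)}\F(u),
\end{align*}
where the equality holds because $(\psi-u)_+\equiv 0$ for $u\in K^\psi_g(\Omega)$, so that the penalisation term vanishes and $\tilde\F$ reduces to $\F$ on the constrained class; the inequality is simply that infimising over a larger class can only decrease the value. For the reverse inequality I would invoke Proposition \ref{prop:exact}: for $\kappa>\kappa_0$ the (unique, by the direct method and strict convexity coming from \eqref{ass:elliptic}) minimiser $\tilde u$ of $\tilde\F$ over $W^{1,p}_g(\Omega)$ satisfies $\tilde u\geq\psi$ a.e., hence $\tilde u\in K^\psi_g(\Omega)$ and the penalty term vanishes at $\tilde u$. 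Therefore
\begin{align*}
\inf_{u\in W^{1,p}_g(\Omega)}\tilde\F(u)=\tilde\F(\tilde u)=\F(\tilde u)\geq \inf_{u\in K^\psi_g(\Omega)}\F(u),
\end{align*}
which combined with the first display gives the claimed equality of infima.

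For the final assertion that minimisers exist and agree, I would argue as follows. Existence of a minimiser $\tilde u$ of $\tilde\F$ over $W^{1,p}_g(\Omega)$ is given by Proposition \ref{prop:exact}; by what was just shown $\tilde u\in K^\psi_g(\Omega)$ and $\F(\tilde u)=\tilde\F(\tilde u)=\inf_{K^\psi_g(\Omega)}\F$, so $\tilde u$ is also a minimiser of $\F$ over $K^\psi_g(\Omega)$. Conversely, if $v\in K^\psi_g(\Omega)$ minimises $\F$, then $\tilde\F(v)=\F(v)=\inf_{K^\psi_g(\Omega)}\F=\inf_{W^{1,p}_g(\Omega)}\tilde\F$, so $v$ minimises $\tilde\F$; by the uniqueness of the minimiser of $\tilde\F$ (strict convexity via \eqref{def:bounds1}) we get $v=\tilde u$, so the minimisers coincide. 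No real obstacle arises here — the only point requiring a little care is to note that the penalty term genuinely vanishes on the constrained class and that strict convexity, which forces uniqueness and hence the "agree" clause, is available from the hypotheses inherited from Proposition \ref{prop:exact}.
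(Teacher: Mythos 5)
Your argument is correct and is precisely the bookkeeping the paper leaves implicit: the corollary is stated without proof as an immediate consequence of Proposition \ref{prop:exact}, and your unwinding (penalty vanishes on $K^\psi_g(\Omega)$, the unconstrained minimiser lands in $K^\psi_g(\Omega)$ for $\kappa>\kappa_0$, uniqueness from strict convexity forces the minimisers to coincide) is exactly the intended reasoning.
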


\section{An apriori estimate for a regularised functional}\label{sec:estimate}
A drawback of working with \eqref{L1Obstacle} is that the integrand is not differentiable. This causes issues in applying the Euler-Lagrange equation and monotonicity inequalities, such as \eqref{def:lower}. This motivates us to introduce the following regularisation.

Fix $H_\delta\in C^\infty(\R)$ such that $H_\delta(x)\to \max(0,x)$ uniformly in $\R$, and $H_\delta(\cdot)$ is non-negative, convex, non-decreasing, with $\|H'_\delta\|_{L^\infty(\R)}\leq 2$ for $\delta\leq 1$.
Given $\G(\cdot)$ satisfying \eqref{ass:elliptic} and \eqref{ass:growth} with $p=q$, we define for $\delta\in(0,1)$,
\begin{align}\label{regObstacle}
\tilde\G_\delta(w)=\int_\Omega G(x,\D w)+\kappa H_\delta\left((H_\delta(\psi_i-w_i))\right)\d x.
\end{align}
By the direct method, minimisers $\tilde u_\delta$ of $\tilde\G_\delta(\cdot)$ in the class $W^{1,p}_g(\Omega)$ exist and are unique.
Let $\tilde u$ be the minimiser of the functional $\tilde\G(\cdot)$ defined through \eqref{L1Obstacle} posed for $\G(\cdot)$.
Then $\tilde u$ is a valid competitor in the problem \eqref{regObstacle}. Hence if $\tilde u_\delta$ minimises \eqref{regObstacle}, we have
\begin{align*}
\int_\Omega G(x,\D \tilde u_\delta)+\kappa H_\delta\left((H_\delta(\psi_i-\tilde u_{\delta,i}))\right)\d x&\leq \int_\Omega G(x,\D \tilde u)+\kappa H_\delta\left((H_\delta(\psi_i-\tilde u_i))\right)\d x\\
&\to \int_\Omega G(x,\D \tilde u)+\kappa(\psi-\tilde u)_+\d x<\infty,
\end{align*}
as $\delta \to 0$.
In particular, using \eqref{def:lower}, we extract a subsequence, not relabelled, such that $\tilde u_\delta\rightharpoonup v$ weakly to some $v\in W^{1,p}_g(\Omega)$. Necessarily, $v$ is a minimiser of $\tilde \G(\cdot)$. In fact, for any $\delta>0$, we have
\begin{align*}
\int_\Omega G(x,\D \tilde u_\delta)+\kappa H_\delta\left((H_\delta(\psi_i-\tilde u_{\delta,i}))\right)\d x\leq \int_\Omega G(x,\D \tilde u)+\kappa H_\delta\left((H_\delta(\psi_i-\tilde u_i))\right)\d x.
\end{align*}
Taking limits as $\delta\to 0$ on both sides the claim follows. Since $\tilde G(\cdot)$ is convex, we deduce that $v=u$.
Moreover, due to \eqref{ass:elliptic} and the convexity of $H_\delta(\cdot)$, we have
\begin{align*}
&\quad\int_\Omega G(x,\D (\tilde u_\delta+\tilde u)/2)+\kappa H_\delta\left((H_\delta(\psi_i-(\tilde u_{\delta,i}+\tilde u_i)/2))\right)\d x+c\|\tilde u-\tilde u_\delta\|_{W^{1,p}(\Omega)}\\
&\leq \frac 1 2\left(\int_\Omega G(x,\D \tilde u_\delta)+\kappa H_\delta\left((H_\delta(\psi_i-\tilde u_{\delta,i}))\right)+G(x,\D \tilde u)+\kappa H_\delta\left((H_\delta(\psi_i-\tilde u_i))\right)\d x\right).
\end{align*}
Letting $\delta\to 0$ and combining that $\tilde u_\delta\to u$ strongly in $L^p(\Omega)$ with the lower semi-continuity of $G(\cdot)$ in $W^{1,p}(\Omega)$, we deduce that $\tilde u_\delta \to u$ strongly in $W^{1,p}(\Omega)$.

Recalling Lemma \ref{lem:lavrentiev} and taking a diagonal subsequence, the key tool we require in order to prove our main theorem is an apriori bound for minimisers of the functional
\begin{align*}
\F_{\e,\delta}(u)=\int_\Omega F(x,\D u)+\e|\D u|^q+\kappa H_\delta\left((H_\delta(\psi_i-u_i))\right)\d x,
\end{align*}
where the bound is independent of $\delta$ and $\e$.

\subsection{The autonomous case}
In this section we consider minimisers of $\F_{\e,\delta}(\cdot)$ in the autonomous case, that is, when $F(x,z)=F(z)$, and derive an apriori estimate that is independent of $\e$ and $\delta$.

Our goal is to prove the following.
\begin{proposition}\label{prop:auton}
Suppose $2\leq p\leq q<\min\left(p+1,\frac{np}{n-1}\right)$.
Let the data $g\in W^{2,q}(\Omega)$ and ${\psi\in W^{2,\infty}(\Omega)}$ be given. Suppose that $F\equiv F(z)$ satisfies \eqref{ass:elliptic} and \eqref{ass:growth}. Then, for minimisers $u_{\e,\delta}$ of $\F_{\e,\delta}(\cdot)$, for any $\alpha\in(0,1/2)$ and some $\beta>0$, we have that
\begin{align*}
\|\D u_{\e,\delta}\|_{L^q(\Omega)}^q&\lesssim[V_{\mu,p}(\D u_{\e,\delta})]_{B^{\alpha,2}_\infty(\Omega)}^2\\
&\lesssim \left(1+\|V_{\mu,p}(\D u_{\e,\delta})\|_{L^2(\Omega)}^2+\|g\|_{W^{2,q}(\Omega)}^q\right)^\frac{\beta} {1-\beta}\left(1+\int_\Omega F_\e(\D u_{\e,\delta})\d x\right)^\frac{1}{1-\beta}
\end{align*}
where the implicit constant is independent of $\e,\delta$.
\end{proposition}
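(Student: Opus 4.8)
The plan is to run the difference-quotient method on the Euler--Lagrange equation of $\F_{\e,\delta}(\cdot)$, exploiting the boundary-flattening/cone machinery of Section~\ref{sec:lipschitzDomains} to handle the global aspect, and the Bella--Sch\"affner cut-off-on-spheres trick to gain the improved exponent $q<\frac{np}{n-1}$. First I would write down the Euler--Lagrange equation for the (now smooth, thanks to $H_\delta$) functional $\F_{\e,\delta}$ and difference it: for $h\in\R^n$ small, subtract the equation at $x$ from the equation at $x+h$, test with $\tau_{-h}(\eta^2 \tau_h u_{\e,\delta})$ for a suitable cut-off $\eta$, and use the monotonicity bound \eqref{def:bounds1} together with Lemma~\ref{lem:Vfunctional} to produce a left-hand side controlling $\int \eta^2 |\tau_h V_{\mu,p}(\D u_{\e,\delta})|^2$. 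The $\e|\D u|^q$ term contributes a nonnegative piece and may be discarded on the left; the penalty term $\kappa H_\delta(H_\delta(\psi-u))$ contributes a term involving $D^2\psi$, which is controlled by $\|\psi\|_{W^{2,\infty}}$ after one integration by parts (here is where $\psi\in W^{2,\infty}$ is used, exactly as in Proposition~\ref{prop:exact}). The right-hand side is the usual commutator term $\int |\D\eta|^2 |\tau_h u|^2 (\mu^2+|\D u|^2+|\D u_h|^2)^{\frac{q-2}{2}}$ plus boundary data terms involving $\tau_h \D g$, hence $\|g\|_{W^{2,q}}$.

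\textbf{Interior vs.\ boundary and the cone.} For interior balls one takes $h$ arbitrary; near $\p\Omega$ one restricts $h$ to the exterior cone $C_{\rho_0}(\theta_0,\pmb n(x))$ of \eqref{eq:uniformCone}, so that translating by $-h$ keeps competitors admissible (the obstacle is handled via the $L^1$-penalised unconstrained functional $\tilde\F$, so there is no constraint to preserve — this is the whole point of Section~\ref{sec:penalty}). Covering $\p\Omega$ by finitely many such charts and using the localisation \eqref{eq:besovlocalisation} together with the cone-independence of the Besov seminorm constants noted after \eqref{eq:besovcharacterisation}, one assembles a global bound
\[
[V_{\mu,p}(\D u_{\e,\delta})]_{B^{\alpha,2}_\infty(\Omega)}^2 \lesssim \big(1+\|g\|_{W^{2,q}(\Omega)}^q\big) + \int_\Omega |\D u_{\e,\delta}|^{q-2+\,?}\,\dx
\]
for some $\alpha<1/2$, where the exponent in the last integral is what must be controlled. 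Applying the Besov embedding Theorem~\ref{thm:embedding} with $s=\alpha$, $p=2$, $p_1=\frac{2n}{n-2\alpha}$ converts the left side into an $L^{p_1-\e}$-bound on $V_{\mu,p}(\D u_{\e,\delta})$, i.e.\ an $L^{\frac{(n-\e')p}{n-2\alpha}}$-type bound on $\D u_{\e,\delta}$; choosing $\alpha$ close to $1/2$ this reaches any exponent below $\frac{np}{n-1}$, which is where the gap condition enters.

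\textbf{The sphere trick and closing the estimate.} The improvement from $\frac{(n+1)p}{n}$ to $\frac{np}{n-1}$ comes from not fixing the radial cut-off: on a ball $B_R$ one chooses the cut-off to be (essentially) linear on an annulus and optimises the inner/outer radii depending on $u_{\e,\delta}$, so that at the crucial step the Sobolev embedding is applied on spheres $\p B_r$ rather than on balls — here Lemma~\ref{lem:FubiniType} is exactly the tool that lets one integrate the spherical estimates back up in $r$, and the better dimensional constant in the $(n-1)$-dimensional embedding is what buys the wider gap. Having obtained $\|\D u_{\e,\delta}\|_{L^q(\Omega)}^q \lesssim [V_{\mu,p}(\D u_{\e,\delta})]_{B^{\alpha,2}_\infty}^2$, one then feeds the $L^q$-bound back into the right-hand commutator integral, uses the interpolation inequality \eqref{eq:interpolationInequality} to split $\int |\D u|^{\,\text{(something)}}$ between an $L^p$-norm (absorbed into $\int F_\e(\D u_{\e,\delta})$ via \eqref{def:lower}) and a high $L^q$-norm with a small power, and absorbs. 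The bookkeeping yields an exponent $\beta=\beta(n,p,q)<1$ precisely because $q<\frac{np}{n-1}$, giving the stated Young-type inequality with powers $\frac{\beta}{1-\beta}$ and $\frac{1}{1-\beta}$. The main obstacle, and the part requiring genuine care rather than routine estimation, is the simultaneous bookkeeping of (a) the exponent produced by iterating difference quotients, (b) the gain from the spherical embedding, and (c) the absorption via interpolation — making sure all three are compatible exactly in the range $q<\min(p+1,\frac{np}{n-1})$, with the $p+1$ coming from the penalty/obstacle terms (as in Lemma~\ref{lem:nonLavrentiev}) and the $\frac{np}{n-1}$ from the sphere trick, and with every constant independent of $\e$ and $\delta$.
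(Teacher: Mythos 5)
Your outline follows essentially the same route as the paper: penalise to remove the constraint, run a global difference-quotient argument with $h$ restricted to the exterior cone, use the Bella--Sch\"affner optimised radial cut-off together with the Sobolev embedding on spheres and Lemma \ref{lem:FubiniType}, and close by interpolation and absorption. Two points of divergence are worth recording. First, the paper does not difference the Euler--Lagrange equation and test with $\tau_{-h}(\eta^2\tau_h u)$; it works at the level of the energy, comparing $\F_{\e,\delta}$ at the minimiser with its value at the convex combination $T_h\tilde v=\phi\tilde v_h+(1-\phi)\tilde v$, extracting the coercive term $\int_{B_\rho}|V_{\mu,p}(\D\tilde v_h)-V_{\mu,p}(\D\tilde v)|^2$ from \eqref{ass:elliptic} plus one application of the Euler--Lagrange equation. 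This energy-level formulation matters later: the translation term $A_2$ is reduced by convexity and a change of variables to $|h|^{-1}\int|\phi(x-h)-\phi(x)|F_\e(\D\tilde v)$, which must itself be controlled by the optimised radial cut-off -- this is why Lemma \ref{lem:schaffner} carries the extra statement bounding $|\tilde\phi(r_2)-\tilde\phi(r_1)|/(r_2-r_1)$; your sketch omits this term. Second, two attributions are off: in this proposition the penalty contribution is estimated simply via $\|H_\delta'\|_{L^\infty}\leq 2$ and the $W^{1,p}$-bound on $v$ (no integration by parts, no $D^2\psi$; the hypothesis $\psi\in W^{2,\infty}$ is consumed in Proposition \ref{prop:exact}), and the restriction $q<p+1$ enters not through the obstacle but through the boundary-datum term $D_1\sim\|g\|_{W^{2,q}(\Omega)}\|\D v\|_{L^q(\Omega)}^{q-1}$, which is absorbed by interpolating $\|\D v\|_{L^q}^{q-1}$ between $L^p$ and $L^{np/(n-\beta)}$ and needs $2\theta(q-1)/p<2$. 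Neither issue is fatal, but both would have to be repaired in a full write-up.
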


The key tool in the proof is the following Lemma. It is a version of the key lemma in \cite{Schaeffner2020} but adapted to our purposes.
\begin{lemma}\label{lem:schaffner}
Fix $n\geq 2$ and let $t>1$. For given $0<\rho<\sigma<\infty$ with $\sigma-\rho<1$ and $w\in L^1(B_\sigma)$, consider
\begin{align*}
J(\rho,\sigma,w) = \inf \left\{\int_{B_\sigma} |w| (|\D \phi|+|\D \phi|^t)\d x\colon \phi\in C^1_0(B_\sigma),\, \phi \geq 0,\, \phi =1 \text{ in } B_\rho\right\}.
\end{align*}
Then, for every $\delta\in(0,1)$, we have
$$
J(\rho,\sigma,w)\leq (\sigma-\rho)^{-t-1/\delta}\left(\int_\rho^\sigma \left(\int_{\p B_r} |w|\d \sigma\right)^\delta \d r\right)^\frac {1} {\delta}.
$$
Moreover, given $\e>0$, if $|w|\geq 1$ in $B_\sigma$, there exists a radial symmetric $\tilde\phi\in C^1_0(B_\sigma)$ with $\tilde \phi\geq 0$ and $\tilde \phi =1$ in $B_\sigma$ such that
$$\int_{B_\sigma} |w| (|\D \tilde\phi|+|\D \tilde\phi|^t)\d x\leq (\sigma-\rho)^{-t-1/\delta}\left(\int_\rho^\sigma \left(\int_{\p B_r} |w|\d \sigma\right)^\delta\d r\right)^\frac {1} {\delta}+\e,$$
and, for almost every $\rho\leq r_1<r_2\leq \sigma$,
$$
\frac{|\tilde\phi(r_2)-\tilde\phi(r_1)|}{(r_2-r_1)}\leq (\sigma-\rho)^{-t-1/\delta}\left(\int_\rho^\sigma \left(\int_{\p B_r} |w|\d \sigma\right)^\delta\d r\right)^\frac {1} {\delta}+\e.
$$

\end{lemma}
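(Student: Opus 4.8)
The plan is to reduce the optimisation over all admissible cut-offs $\phi$ to a one-dimensional problem over radial profiles, following the strategy of \cite{Schaeffner2020}. First I would restrict attention to radial cut-offs $\phi(x)=f(|x|)$ with $f\in C^1([\rho,\sigma])$, $f\geq 0$, $f=1$ on $[0,\rho]$ and $f=0$ near $\sigma$; clearly $|\D\phi(x)|=|f'(|x|)|$, so by the coarea formula
\begin{align*}
\int_{B_\sigma}|w|(|\D\phi|+|\D\phi|^t)\d x=\int_\rho^\sigma\left(\int_{\p B_r}|w|\d\sigma\right)\left(|f'(r)|+|f'(r)|^t\right)\d r.
\end{align*}
Writing $m(r)=\int_{\p B_r}|w|\d\sigma$, the task becomes: minimise $\int_\rho^\sigma m(r)(|f'|+|f'|^t)\d r$ subject to $f(\rho)=1$, $f(\sigma)=0$, $f$ monotone decreasing.

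The key one-dimensional estimate is obtained by choosing $f$ explicitly. I would take $f' $ proportional to $m(r)^{-\gamma}$ for a suitable exponent $\gamma$ chosen so that (i) the normalisation $\int_\rho^\sigma|f'|\d r=1$ holds and (ii) the exponent of $m$ appearing in the integral $\int m|f'|^t$ matches the desired power $\delta$. Concretely, setting $|f'(r)|=c\,m(r)^{-\gamma}$ with $c^{-1}=\int_\rho^\sigma m(r)^{-\gamma}\d r$, one computes both $\int_\rho^\sigma m|f'|\d r = c\int_\rho^\sigma m^{1-\gamma}\d r$ and $\int_\rho^\sigma m|f'|^t\d r = c^t\int_\rho^\sigma m^{1-t\gamma}\d r$; choosing $\gamma$ so that $1-t\gamma=0$, i.e.\ matching against $\delta=1-\gamma=1-1/t$... here one must be careful, since the statement allows arbitrary $\delta\in(0,1)$, so instead I would parametrise by $\delta$ directly: set $|f'(r)|=c\,m(r)^{\delta-1}$ so that the second integral involves $m^{1+t(\delta-1)}$ and then apply Hölder/Jensen with the exponent pair $(\frac1\delta,\frac1{1-\delta})$ to pull out $\left(\int_\rho^\sigma m^\delta\d r\right)^{1/\delta}$, absorbing the remaining $(\sigma-\rho)$-dependence into the negative power $(\sigma-\rho)^{-t-1/\delta}$. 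I expect the bookkeeping of exponents here, making sure the two terms $|\D\phi|$ and $|\D\phi|^t$ are both controlled by the same right-hand side, to be the main technical obstacle.

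For the second, refined part of the statement, under the additional assumption $|w|\geq 1$ on $B_\sigma$, I would observe that the explicit radial profile $\tilde f$ constructed above (after an arbitrarily small mollification/perturbation to land in $C^1_0$ and gain the extra $\e$) has derivative $|\tilde f'(r)|=c\,m(r)^{\delta-1}$ which is pointwise bounded: since $|w|\geq 1$ gives $m(r)\geq |\p B_r|\gtrsim r^{n-1}$, hence $m(r)^{\delta-1}\lesssim r^{(n-1)(\delta-1)}$, but more usefully the quantity $c\,m(r)^{\delta-1}$ is itself dominated by the claimed bound $(\sigma-\rho)^{-t-1/\delta}\left(\int_\rho^\sigma m^\delta\d r\right)^{1/\delta}$ up to $\e$ — this is exactly the content of the Lipschitz-in-$r$ estimate once we note $\frac{|\tilde f(r_2)-\tilde f(r_1)|}{r_2-r_1}\leq \sup_{[r_1,r_2]}|\tilde f'|$ and that $|w|\geq 1$ lets us bound $|\tilde f'(r)|=c\,m(r)^{\delta-1}\leq c\left(\int_\rho^\sigma m^\delta\d r\right)^{(\delta-1)/\delta}\cdot(\text{something})$; more cleanly, since $m\geq 1$ we have $m^{\delta-1}\leq 1$ componentwise is false in general, so instead I would directly verify $c\,m(r)^{\delta-1}\leq \left(\int_\rho^\sigma m(r)^{\delta-1}\d r\right)^{-1} m(r)^{\delta-1}$ and bound this crude quotient by the target using $\int_\rho^\sigma m^{\delta-1}\d r \geq (\sigma-\rho)\cdot(\sup m)^{\delta-1}$ together with $\int_\rho^\sigma m^\delta\d r\leq (\sigma-\rho)(\sup m)^\delta$.

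In summary: the proof is (1) reduce to radial profiles via coarea; (2) plug in the explicit profile $|f'(r)|\propto m(r)^{\delta-1}$ normalised to $f(\rho)=1$, $f(\sigma)=0$; (3) estimate the two resulting integrals by Hölder with exponents $1/\delta$ and $1/(1-\delta)$, collecting the $(\sigma-\rho)$ powers; (4) regularise to get into $C^1_0(B_\sigma)$ at the cost of $\e$; and (5) for the refined statement, use $|w|\geq 1$ to turn the pointwise size of $f'$ into the stated difference-quotient bound. The delicate point throughout is the simultaneous control of the linear and $t$-power gradient terms by a single expression depending only on the $L^\delta$-norm of $r\mapsto\int_{\p B_r}|w|$, which is precisely where the flexibility in the free parameter $\delta$ and the negative power $(\sigma-\rho)^{-t-1/\delta}$ is spent.
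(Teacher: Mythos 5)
Your outline matches the paper's proof in its skeleton: pass to radial competitors so the problem becomes one-dimensional in $r$, insert an explicit normalised profile whose derivative is a negative power of $m(r)=\int_{\p B_r}|w|\,\d\mathscr H^{n-1}$, and finish with H\"older/Jensen. The substantive difference is the profile, and it matters precisely at the step you defer. The paper takes $|\tilde\phi'(r)|=b(r)^{-1}\big/\int_\rho^\sigma b^{-1}\d r$ with $b=m+\e$ (exponent $-1$, not your $\delta-1$; the $+\e$ guards against division by zero and is removed at the end). With this choice the linear term collapses to $(\sigma-\rho)\big/\int_\rho^\sigma b^{-1}\d r$ and the $t$-power term to $\int_\rho^\sigma b^{1-t}\d r\big/\bigl(\int_\rho^\sigma b^{-1}\d r\bigr)^t$, which Jensen reduces to a power of the \emph{same} quantity $\bigl(\int_\rho^\sigma b^{-1}\d r\bigr)^{-1}$; the parameter $\delta$ then enters exactly once, at the very end, through the single H\"older inequality $\sigma-\rho\leq\bigl(\int_\rho^\sigma b^{\delta}\bigr)^{1/(1+\delta)}\bigl(\int_\rho^\sigma b^{-1}\bigr)^{\delta/(1+\delta)}$. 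With your profile $|f'|\propto m^{\delta-1}$ the linear term still works (H\"older with weights $1-\delta$, $\delta$ on $m^\delta$, $m^{\delta-1}$ gives $\int m^{\delta-1}\geq(\sigma-\rho)^{1/\delta}\bigl(\int m^\delta\bigr)^{(\delta-1)/\delta}$), but the $t$-power term becomes $\int m^{1+t(\delta-1)}\big/\bigl(\int m^{\delta-1}\bigr)^{t}$, and the interpolation you propose — writing $m^{1+t(\delta-1)}$ as a H\"older combination of $m^{\delta}$, $m^{\delta-1}$ and $1$ with nonnegative weights summing to one — only reaches exponents in the interval $[-(1-\delta),\delta]$, which excludes $1+t(\delta-1)$ as soon as $t>1+\frac{1}{1-\delta}$. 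So the ``bookkeeping'' you flag as the main obstacle is a genuine gap for your choice of profile, at least in the generality of the statement ($t>1$, $\delta\in(0,1)$ arbitrary); with the exponent $-1$ it disappears.

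Two smaller points. In the last part you argue yourself into a corner: under $|w|\geq1$ one has $b\gtrsim 1$ on $[\rho,\sigma]$, hence $b^{-1}\leq1$ (your parenthetical ``is false in general'' is backwards for a negative exponent), and the difference quotient of the paper's profile is then bounded by $\bigl(\int_\rho^\sigma b^{-1}\d r\bigr)^{-1}$, which is estimated exactly like the linear term; the same mechanism would rescue your profile via $m^{\delta-1}\leq1$. Finally, no mollification is needed to land in $C^1_0$: the paper simply enlarges the competitor class from $C^1$ to $W^{1,\infty}$ by a standard approximation, which is where the additive $\e$ in the conclusion comes from.
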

\begin{proof}
The estimate follows by considering appropriate radially symmetric cut-off functions. Indeed, for any $\e\geq 0$,
\begin{align*}
&\quad J(\rho,\sigma,w)\\
&\leq \inf\left\{\int_\rho^\sigma (|\phi'|+|\phi'|^t)\left(\int_{\p B_r} |w|+\e\d \sigma\right)\d r\colon \phi\in C^1([\rho,\sigma]),\,\phi(\rho)=1,\,\phi(\sigma)=0\right\}\\
&=J_{1d,\e}.
\end{align*}
Since $w\in L^1(B_\sigma)$, by employing a standard approximation argument we can replace ${\phi\in C^1([\rho,\sigma])}$ with $\phi\in W^{1,\infty}(\rho,\sigma)$ in the definition of $J_{1d,\e}$. Then we set
\begin{align*}
\tilde \phi(r)=1-\left(\int_\rho^\sigma b(r)^{-1}\d r\right)^{-1}\left(\int_{\rho}^r b(r)^{-1}\d r\right) \quad\text{ where } b(r)=\int_{\p B_r} |w|\d\mathscr H^{n-1}+\e.
\end{align*}
Clearly $\tilde \phi\in W^{1,\infty}(\rho,\sigma)$ with $\tilde \phi(\rho)=1$ and $\tilde \phi(\sigma)=0$. Hence, we have that
\begin{align*}
J_{1d,\e}\leq \int_\rho^\sigma (|\tilde \phi'|+|\tilde \phi'|^t) b(r)\d r = \dfrac{\sigma-\rho}{\int_\rho^\sigma b(r)^{-1}\d r}+\frac{\int_\rho^\sigma b(r)^{1-t}\d r}{\left(\int_\rho^\sigma b(r)^{-1}\d r\right)^t}= I + II
\end{align*}
By H\"older's inequality, for any $s>1$, we have
\begin{align*}
(\sigma-\rho)= \int_\rho^\sigma \left(\frac{b(r)}{b(r)}\d r\right)^\frac{s-1}{s} \leq\left(\int_\rho^\sigma b(r)^{s-1}\d r\right)^{\frac{1}{s}} \left(\int_\rho^\sigma b(r)^{-1}\d r\right)^\frac{s-1}{s},
\end{align*}
Thus, for any $\delta>0$,
\begin{align*}
I\leq (\sigma-\rho)^{-(1+\delta)/\delta}\left(\int_\rho^\sigma\left(\int_{\p B_r} |w|\d \mathscr H^{n-1}+\e\d r\right)^\delta\right)^\frac 1 \delta.
\end{align*}
We now focus on the second term $II$. Using Jensen's inequality, we see that
\begin{align*}
\int_\sigma^\rho b(r)^{1-t}\d r\leq(\sigma-\rho)^{2-t} \left(\int_\sigma^\rho b(r)^{-1}\d r\right)^{t-1}.
\end{align*}
Hence, for any $\delta>0$, using the estimate for the term $I$, we obtain
\begin{align*}
II&\leq (\sigma-\rho)^{2-t} \left(\int_\sigma^\rho b(r)^{-1}\d r\right)^{-1}\\
&\leq (\sigma-\rho)^{1-t-(1+\delta)/\delta} \left(\int_\rho^\sigma b(r)^\delta\d r\right)^{1/\delta}
\end{align*}
Collecting estimates and letting $\e\to 0$ the result follows.

For the latter statement in the lemma, since $|w|\geq 1$, we note that
$$
 \frac{|\tilde \phi(r_1)-\tilde \phi(r_2)|}{r_2-r_1} = \frac{\int_{r_1}^{r_2} b(r)^{-1}\d r}{(r_2-r_1)\int_\rho^\sigma b(r)^{-1}\d r}
\leq \frac{1}{\int_\rho^\sigma b(r)^{-1}\d r}.
$$
The claim follows, estimating as for the term $I$.
\end{proof}

We note the following consequence of the previous Lemma. We set $\xi = \frac{n-1}{n-2}$ if $n>2$ and $\xi = \infty$ if $n=2$. Suppose that $w\in L^q(B_\sigma)$, $t>1$ and $q<\frac{np}{n-1}$. Then, for any $\delta\in(0,1]$ and $\rho<\sigma$ with $|\rho-\sigma|<1$,
\begin{align*}
J(\sigma,\rho,|w|^q)\leq (\sigma-\rho)^{-t-1/\delta}\left(\int_\rho^\sigma\|w\|_{L^q(\p B_r)}^{q\delta}\d r\right)^\frac 1 \delta.
\end{align*}
By H\"older's inequality, we find that
\begin{align*}
\left(\int_\rho^\sigma\|w\|_{L^q(\p B_r)}^{q\delta}\d r\right)^\frac 1 \delta&\leq \left(\int_\rho^\sigma \|w\|_{L^p(\p B_r)}^{\theta q \delta}\|w\|_{L^{\xi p}(\p B_r)}^{(1-\theta)q\delta}\d r\right)^\frac 1 \delta\\
&\leq \left(\int_\rho^\sigma \|w\|_{L^p(\p B_r)}^{\theta q\delta s/(s-1)}\d r\right)^\frac{s-1}{s\delta} \left(\int_\rho^\sigma \|w\|_{L^{\xi p}(\p B_r)}^{(1-\theta) q \delta s}\d r\right)^\frac{1}{\delta s}
\end{align*}
where $\theta\in(0,1)$ is such that
\begin{align*}
\frac \theta p + \frac{1-\theta}{\xi p } = \frac 1 q, \qquad s>1.
\end{align*}
We make the admissible choice
\begin{align*}
\delta = \frac p q, \qquad s = \frac 1 {1-\theta}.
\end{align*}
It follows that
\begin{align}
J(\sigma,\rho,|w|^q)\leq (\sigma-\rho)^{-t-q/p}\left(\int_{B_\sigma\setminus B_\rho} |w|^p\d r\right)^{\frac{\xi}{\xi-1}(1-\frac q {\xi p})}\left(\int_\rho^\sigma \|w\|_{L^{\xi p}(\p B_r)}^p\d r\right)^{\frac \xi {\xi-1} \left(\frac q p-1\right)}.
\end{align}
Here we understand that $\frac{\infty}{\infty-1} = 1$.

By the same argument, if $|w|\geq 1$ in $B_\sigma$ and $\e>0$, we can pick a radial test function $\tilde \phi$ such that, for almost every $\rho\leq r_1<r_2\leq \sigma$,
\begin{align}\label{eq:basicRadial}
&\quad\int_{B_\sigma} |w|^q (|\D \tilde\phi|+|\D \tilde\phi|^p)\d x+\frac{|\tilde\phi(r_2)-\tilde\phi(r_1)|}{|r_2-r_1|}\nonumber\\
&\leq (\sigma-\rho)^{-p-q/p}\left(\int_{B_\sigma\setminus B_\rho} |w|^p\d r\right)^{\frac{\xi}{\xi-1}(1-\frac q {\xi p})}\left(\int_\sigma^\rho \|w\|_{L^{\xi p}(\p B_r)}^p\d r\right)^{\frac \xi {\xi-1} \left(\frac q p-1\right)}+\e.
\end{align}
This statement is exactly the result we need and so we can proceed to prove the main estimate of this section.
\begin{proof}[Proof of Proposition \ref{prop:auton}]
By the direct method and strict convexity deriving from \eqref{ass:elliptic}, we obtain  the existence of minimisers $v_{\e,\delta}\in W^{1,q}_0(\Omega)$ that solve
\begin{align*}
\min_{v\in W^{1,q}_0(\Omega)} \F_{\e,\delta}(v+g)
\end{align*}
Moreover, $v_{\e,\delta}$ satisfies the Euler--Lagrange equation
\begin{align}\label{eq:EulerL}
\int_\Omega \partial_z F_\e(\D v_{\e,\delta}+\D g)\cdot \D \phi+\p_y \tilde H_\delta(v_{\e,\delta}+g)\cdot \phi\d x=0 \qquad\forall\phi\in W^{1,q}_0(\Omega).
\end{align} 
Here $\tilde H_\delta(u)=H_\delta\left(((H_\delta(\psi_i-u_i))\right)$.
For notational simplicity we suppress the dependence on $\e$ and $\delta$ for the time being and write $v= v_{\e,\delta}$.

Let $\rho_0>0$ and $\pmb n\colon \R^n\to \S^{n-1}$ be such that the uniform cone property \eqref{eq:uniformCone} holds. Without loss of generality, we may assume that $\rho_0<1$.
Let $\rho_0\geq\sigma>\rho\geq\rho_0/2$. Fix $x_0\in \Omega$ and take $\phi\in C^1_c(\R^n)$, a radially symmetric, monotonic decreasing function with $\phi=1$ in $B_\rho(x_0)$, $\supp\, \phi\subset B_{\sigma}(x_0)$. We denote the extension of $v$ by $0$ to $\R^n$ by $\tilde v$ and write $T_h \D \tilde v = \phi \D \tilde v_h+(1-\phi)\D \tilde v$ and $T_h v = \phi \tilde v_h + (1-\phi)\tilde v$. Using \eqref{ass:elliptic}, the convexity of $\tilde H_\delta(\cdot)$ and \eqref{eq:EulerL} for $h\in C_{\rho_0/4}(\theta_0,\pmb n(x_0))$, we see that
\begin{align}\label{eq:lowerEstimateAuton}
&\quad\int_\Omega F_\e(\D T_h \tilde v+\D  g)+\tilde H_\delta(T_h \tilde v+g)-F_\e(\D\tilde v+\D g)-\tilde H_\delta(\tilde v+g)\d x\nonumber\\
&\gtrsim \int_\Omega(\mu^2+|\D  T_h\tilde v|^2+|\D \tilde v|^2+|\D g|^2)^\frac{p-2}{2}\phi^2 |\D \tilde v_h-\D \tilde v|^2\\
&\quad + \p_z F_\e(\D \tilde v+\D g)\cdot \D (\phi(\tilde v_h-\tilde v))+\p_y \tilde H_\delta(\tilde v+g)\cdot((\phi(\tilde v_h-\tilde v))\d x\\
&\gtrsim \int_{B_\rho(x_0)} |V_{\mu,p}(\D \tilde v_h)-V_{\mu,p}(\D \tilde v)|^2\d x.
\end{align}
To obtain the last line we use Lemma \ref{lem:Vfunctional} and note that by our choice of $h$, we have ${\phi \tilde v_h\in W^{1,q}_0(\Omega)}$.

We continue by estimating
\begin{align*}
|h|^{-1}\int_\Omega \tilde H_\delta(T_h \tilde v+g)-\tilde H_\delta(\tilde v+g)\d x
&\lesssim |h|^{-1}\|H_\delta'\|_{L^\infty(\R)}^2 \int_\Omega |\tilde v_h-\tilde v|\d x\\
&\lesssim \|H_\delta'\|_{L^\infty(\R)}^2\|v\|_{W^{1,p}(\Omega)}\\
&\lesssim \F_\e(v).
\end{align*}
We further estimate, for $h\in C_{\rho_0/4}(\theta_0,\pmb n(x_0))$, that
\begin{align*}
&\quad |h|^{-1}\int_\Omega F_\e(\D T_h \tilde v+\D g)-F_\e(\D \tilde v+\D g)\d x\\
&=|h|^{-1}\int_\Omega F_\e(T_h \D \tilde v +\D \phi(\tilde v_h-v)+\D g)-F_\e(T_h \D \tilde v+\D g)\d x\\
&\qquad+|h|^{-1}\int_\Omega F_\e(T_h \D \tilde v+\D g)-F_\e(\D \tilde v+\D g)\d x\\
=:& A_1 + A_2.
\end{align*}
First using \eqref{ass:growth} and Young's inequality, we find that
\begin{align*}
|A_1|&\lesssim |h|^{-1}\int_\Omega |\D \phi| |\tilde v_h-\tilde v|\big(1+(|T_h \D \tilde v|+|\D \phi\tau_h\tilde v|+|\D g|)^{q-1}\big)\d x\\
&\leq \e_1\int_{B_\sigma(x_0)}  \frac{|\tilde v_h-\tilde v|^q}{|h|^q}\d x+C(\e_1)\int_\Omega (|\D \phi|+|\D \phi|^q)^\frac q {q-1} w_1\d x\\
&\lesssim \e_1\int_{B_{\sigma+\rho_0/4}(x_0)}  |\D \tilde v|^q\d x+C(\e_1)\int_\Omega \left(|\D \phi|+|\D \phi|^\frac{q^2}{q-1}\right) w_1\d x,
\end{align*}
where $w_1=1+|\D g|^q+|\D \tilde v|^q+|\D \tilde v_h|^q+|\tilde v|^q+|\tilde v_h|^q$.

Next we turn to $A_2$. By convexity of $F_\e(\cdot)$, we have
\begin{align}\label{eq:A2}
F_\e(T_h \D \tilde v+\D g)-F_\e(\D \tilde v+\D g)
&\leq (1-\phi)F_\e(\D \tilde v+\D  g)+\phi F_\e(\D \tilde v_h+\D g)-F_\e(\D \tilde v+\D g)\nonumber\\[4pt]
 &= \phi(F_\e(\D \tilde v_h+\D g)-F_\e(\D \tilde v+\D g)).
\end{align}
Thus, we deduce that
\begin{align*}
|A_2|&\leq |h|^{-1}\int_{B_{\sigma}(x_0)} \phi (F_\e(\D \tilde v_h+\D \tilde g_h)-F_\e(\D \tilde v+\D g))\d x\\
&\quad +
|h|^{-1}\int_{B_{\sigma}(x_0)} \phi (F_\e(\D \tilde v_h+\D g)-F_\e(\D \tilde v_h+\D \tilde g_h))\d x\\
&=: B_1 + B_2.
\end{align*}
Here $\tilde g_h$ is a $W^{2,q}(\R^n)$ extension of $g$ to $\R^n$ with $\|\tilde g\|_{W^{2,q}(\R^n)}\lesssim \|g\|_{W^{2,q}(\Omega)}$, the existence of which is guaranteed by \eqref{eq:Gextension}.

Using a change of coordinates and the fact that $\D \tilde v_h=\D \tilde v=0$ in $B_{3\rho_0}\setminus \Omega$, we see that
\begin{align*}
|B_1| &\leq |h|^{-1}\int_{B_{\sigma}(x_0)} \phi (F_\e(\D \tilde v_h+\D \tilde g_h)-F_\e(\D \tilde v+\D \tilde g))\d x\\
&= |h|^{-1}\int_{B_{\sigma}(x_0)+h}\phi(x-h)F_\e(\D \tilde v+ \D \tilde g)\d x-|h|^{-1}\int_{B_{\sigma}(x_0)}\phi(x)F_\e(\D \tilde v+ \D \tilde g)\d x\\
&\leq |h|^{-1}\int_{B_{\sigma+\rho_0/4}(x_0)\setminus B_{3\rho/4}(x_0)}|\phi(x-h)-\phi(x)| F_\e(\D \tilde v+\D \tilde g)\d x\\
&\leq |h|^{-1}\int_{B_{\sigma+\rho_0/4}(x_0)\setminus B_{3\rho/4}(x_0)}|\hat\phi(|x-h|)-\hat\phi(|x|)| F_\e(\D \tilde v + \D \tilde g)\d x\\
\end{align*}
Here $\hat\phi$ is defined through the relation $\phi(x)=\phi'(|x|)$ for $x\in \R^n$.

Using \eqref{ass:growth} and H\"older, we estimate $B_2$ as follows:
\begin{align*}
|B_2|&\leq |h|^{-1}\int_{B_\sigma(x_0))} |\D \tilde g-\D \tilde g_h|(1+|\D\tilde v_h|+|\D \tilde v|+|\D g|+|\D \tilde g_h|)^{q-1}\d x\\[5pt]
&\lesssim |h|^{-1}\|\D g-\D\tilde g_h\|_{L^q(B_{\sigma+\rho_0}(x_0))} \big(1+\|\D \tilde v\|_{L^q(B_\sigma(x_0))}^{q-1}+\|\D\tilde v_h\|_{L^q(B_\sigma(x_0))}^{q-1}\\[5pt]
&\qquad+\|\D \tilde g\|_{L^q(B_\sigma(x_0))}^{q-1}+\|\D \tilde g_h\|_{L^q(B_\sigma(x_0))}^{q-1}\big)\\[5pt]
&\lesssim \|g\|_{W^{2,q}(B_\sigma(x_0))}\big(1+\|\D \tilde v\|_{L^q(B_\sigma(x_0))}^{q-1}+\|\D\tilde v_h\|_{L^q(B_\sigma(x_0))}^{q-1}\\[5pt]
&\qquad+\|\D \tilde g\|_{L^q(B_\sigma(x_0))}^{q-1}+\|\D \tilde g_h\|_{L^q(B_\sigma(x_0))}^{q-1}\big)  = D_1,
\end{align*}
where the last line follows from using the difference quotient characterisation of Sobolev spaces.

Collecting terms, we conclude the estimate
\begin{align}\label{eq:apriori}
&\quad [V_{\mu,p}(\D \tilde v)]_{B^{1/2,2}_\infty(B_\rho)}^2\nonumber\\
&\lesssim\e_1 \int_{B_{\sigma+\rho_0/4}(x_0)}|\D \tilde v|^q\d x+ C(\e_1)\int_{B_{\sigma+\rho_0/4}(x_0)} (|\D \phi|+|\D \phi|^\frac{q^2}{q-1})w_1\d x\nonumber\\
&\quad+|h|^{-1}\int_{B_{\sigma+\rho_0/4}(x_0)\setminus B_{\rho-\rho_0}(x_0)}|\phi(x-h)-\phi(x)| F_\e(\D \tilde v)\d x+\F_\e(v)\nonumber\\
&\lesssim \e_1 \int_{B_{\sigma+\rho_0/4}(x_0)}|\D \tilde v|^q\d x+C(\e_1)\int_{B_{\sigma+\rho_0/4}(x_0)} (|\D \phi|+|\D \phi|^\frac{q^2}{q-1}) w_2\d x\nonumber\\
&\quad+|h|^{-1}\int_{B_{\sigma+\rho_0/4}(x_0)\setminus B_{3\rho/4}(x_0)}|\phi(x-h)-\phi(x)| F_\e(\D \tilde v)\d x + D_1 + \F_\e(v),
\end{align}
where $w_2=1+|V_{\mu,p}(\D \tilde v_\e)|^\frac {2q} p+|V_{\mu,p}(\D \tilde v_{\e,h})|^\frac {2q} p+|V_{\mu,p}(\tilde v_{\e})|^\frac {2q} p + |V_{\mu,p}(\tilde v_{\e,h})|^\frac {2q} p$.

We immediately note that with a choice of $\phi\in C^1_c(B_\sigma)$ such that $|\D \phi|\leq 2/(\sigma-\rho)$, we can conclude that
\begin{align}\label{eq:standardEstimate}
[V_{\mu,p}(\D \tilde v)]_{B^{1/2,2}_\infty(B_\rho)}^2
&\lesssim \frac{1}{\e(\sigma-\rho)^\frac{q^2}{q-1}}\left(1+ \int_{\Omega}F_\e(\D \tilde v)\d x+\|g\|_{W^{2,q}(\Omega)}\right)+\F_\e(v).
\end{align}
Choosing $\rho=\rho_0/2$, $\sigma=\rho$ and applying a standard covering argument, we deduce that $V_{\mu,p}(\D \tilde v)\in B^{1/2,2}_\infty(\R^n)$.
This observation ensures that our calculations below are valid.

Combining \eqref{eq:apriori} and \eqref{eq:basicRadial} with $w = w_2^\frac 1 q$, for any $\e_2>0$, we see that
\begin{align*}
&\quad \int_\Omega 1+|V_{\mu,p}(\D \tilde v)|^2\d x+[V_{\mu,p}(\D \tilde v)]_{B^{1/2,2}_\infty(B_\rho(x_0))}^2\\
&\lesssim
\frac{C(\e_1)}{(\sigma+\rho_0/2-\rho)^{q^2/(q-1)+q/p}}\\
&\quad \times\left(\int_{B_{\sigma+\rho_0/2}(x_0)\setminus B_{\rho-\rho_0/4}(x_0)} 1+|V_{\mu,p}(\D \tilde v)|^2+|V_{\mu,p}(\D \tilde v_h)|^2\d x\right)^{\frac{\xi}{\xi-1}(1-\frac q {\xi p})}\\
&\quad\times\left(\int_{\rho-\rho_0/4}^{\sigma+\rho_0/2} 1+\|V_{\mu,p}(\D \tilde v)\|_{L^{2\xi }(\p B_r(x_0))}^2+\|V_{\mu,p}(\D \tilde v_h)\|_{L^{2\xi}(\p B_r(x_0))}^2\d r\right)^{\frac \xi {\xi-1} \left(\frac q p-1\right)}\\
&\quad\times\left(1+\int_\Omega F_\e(\D \tilde v)\d x\right)+\e_1 \int_\Omega |\D \tilde v|^q\d x+\e_2+D_1+\F_\e(v),
\end{align*}
using the Poincar\'{e} inequality, the bound $\int_\Omega 1+|V_{\mu,p}(\D \tilde v_\e)|^2\d x\geq |\Omega|$ and the fact that $$\frac{\xi}{\xi-1}\left(1-\frac q {\xi p}+\frac q p-1\right) = \frac q p\geq 1.$$

Let $0<\alpha<1/2$ and choose $\rho = \rho_0/2$, $\sigma=\rho_0$. Using Theorem \ref{thm:embedding}, followed by Lemma \ref{lem:FubiniType}, we conclude that
\begin{align}\label{eq:preCovering}
&\quad \int_\Omega 1+|V_{\mu,p}(\D \tilde v)|^2\d x+[V_{\mu,p}(\D \tilde v)]_{B^{1/2,2}_\infty(B_\rho(x_0)\cap\Omega)}^2\nonumber\\
&\lesssim C(\e_1)\rho_0^{-q^2/(q-1)-q/p}\left(\int_{\Omega} 1+|V_{\mu,p}(\D \tilde v)|^2\d x\right)^{\frac{\xi}{\xi-1}(1-\frac q {\xi p})}\nonumber\\
&\quad\times\left(1+\|V_{\mu,p}(\D \tilde v)\|_{L^2(\Omega)}^2+[V_{\mu,p}(\D \tilde v)]_{B^{\alpha,2}_\infty(\Omega)}^2\right)^{\frac \xi {\xi-1} \left(\frac q p-1\right)}\nonumber\\
&\quad \times \left(1+\int_\Omega F_\e(\D \tilde v)\d x\right)+\e_1 \int_\Omega |\D \tilde v|^q\d x+\e_2+D_1+\F_\e(v).
\end{align}
By a standard covering argument and \eqref{eq:besovlocalisation}, we deduce that
\begin{align*}
&\quad \int_\Omega 1+|V_{\mu,p}(\D \tilde v)|^2\d x+[V_{\mu,p}(\D \tilde v)]_{B^{\alpha,2}_\infty(\Omega)}^2\\
&\lesssim C(\e_1)\rho_0^{-q^2/(q-1)-q/p}\left(\int_{\Omega} 1+|V_{\mu,p}(\D \tilde v)|^2\d x\right)^{\frac{\xi}{\xi-1}(1-\frac q {\xi p})}\\
&\quad\times\left(1+\|V_{\mu,p}(\D \tilde v_\e)\|_{L^2(\Omega)}^2+[V_{\mu,p}(\D \tilde v)]_{B^{\alpha,2}_\infty(\Omega)}^2\right)^{\frac \xi {\xi-1} \left(\frac q p-1\right)}\\
&\quad \times \left(1+\int_\Omega F_\e(\D \tilde v)\d x\right)+\e_1 \int_\Omega |\D \tilde v|^q\d x+\e_2\\
&\quad + \|g\|_{W^{2,q}(\Omega)}\left(1+\|\D v\|_{L^q(\Omega)}^{q-1}+\|\D g\|_{L^q(\Omega)}^{q-1}\right)+\F_\e(v).
\end{align*}
Next fix $0<\beta<1$.
We note that, by interpolation between $L^p(\Omega)$ and $L^\frac{np}{n-\beta}(\Omega)$,
\begin{align*}
\|\D v\|_{L^q(\Omega)}^{q-1}&\leq \|\D v\|_{L^p(\Omega)}^{(1-\theta)(q-1)}\|\D v\|_{L^\frac{np}{n-\beta}(\Omega)}^{\theta(q-1)}\\
&\leq \|V_{\mu,p}(\D v)\|_{L^2(\Omega)}^{2(1-\theta)(q-1)/p}\|V_{\mu,p}(\D v)\|_{L^\frac{2n}{(n-\beta)}(\Omega)}^{2\theta(q-1)/p}
\end{align*}
with $\theta = \frac{n}{\beta}\left(1-\frac p q\right)$. Since $q<p+1$, it is straightforward to check that $\frac{2\theta(q-1)} p<2$ for $\beta$ chosen sufficiently close to $1$.

Since $\frac q p< 1+ \frac 1 {n-1}$, using the definition of $\xi$ we find that
\begin{align*}
\frac{\xi}{\xi-1}\frac{q-p}{p} <1.
\end{align*}
Choosing $\e_1$ sufficiently small and $\e_2 =1$, after re-arranging and using the embedding of $B^{\alpha,2}_\infty(\Omega)$ into $W^{1,2n/(n-\beta)}(\Omega)$ (which holds for a sufficiently large choice of $\alpha$), we deduce the estimate
\begin{align}\label{eq:autonBesov}
[V_{\mu,p}(\D \tilde v)]_{B^{\alpha,2}_\infty(\Omega)}^2\lesssim \left(1+\|V_{\mu,p}(\D \tilde v)\|_{L^2(\Omega)}^2+\|g\|_{W^{2,q}(\Omega)}^q\right)^\frac{\tilde \beta} {1-\tilde\beta}\left(1+\int_\Omega F_\e(\D \tilde v)\d x\right)^\frac{1}{1-\tilde\beta}
\end{align}
where $\tilde\beta = \max\left(\dfrac{\xi}{\xi-1}\dfrac{q-p}{p},\dfrac{2n(q-1)(q-p)}{\beta pq}\right)$.

As $\|V_{\mu,p}(\D \tilde v)\|_{L^2(\Omega)}^2\lesssim 1+ \F_{\e,\delta}(\D \tilde v)+\|g\|_{W^{2,q}(\Omega)}$, using Lemma \ref{lem:lavrentiev} we conclude that the bound is independent of $\e$.

Finally, by Sobolev embedding, for any $\beta<1$, we can find $\alpha\in(0,1/2)$ such that $\|\D v\|_{L^\frac{np}{n-\beta}(\Omega)}\lesssim \|\D v\|_{B^{2\alpha/p,p}_\infty(\Omega)}\lesssim \|V_{\mu,p}(\D  v)\|_{B^{\alpha,2}_\infty(\Omega)}$. Considering the regularity of $g$, the result now follows.
\end{proof}

\begin{remark}
We have chosen to present the argument leading to a global apriori estimate. However, it is straightforward to adapt our argument in order to obtain a local version of the estimate. This is achieved by considering \eqref{eq:preCovering}. Instead of a covering argument, the estimates follow arguing along similar lines as before but requires an iteration lemma. See, for example, \cite{DeFilippis2021a} for similar arguments.
\end{remark}

\remark{Our results also imply an improved differentiability result. For this we return to \eqref{eq:besovestimate2}. Due to the $W^{1,q}$-regularity of $u_{\e,\delta}$, the right-hand side remains bounded in the limit as $\e,\delta\to 0$. Thus we obtain $B^{1+\frac \alpha p}_\infty(\Omega)$-regularity of $u_{\e,\delta}$.}

\subsection{The non-autonomous case}
For the non-autonomouse case we prove the following result.
\begin{proposition}\label{prop:nonauton}
Let $\alpha\in(0,1]$ and
$2\leq p\leq q<\frac{(n+\alpha)p}{n}$.
Suppose that $g\in W^{1+\alpha,q}(\Omega)$ and $\psi\in W^{2,\infty}(\Omega)$. Let $F\equiv F(x,z)$ satisfy \eqref{ass:elliptic},\eqref{ass:growth} and \eqref{def:bounds3}. Then, for minimisers $u_{\e,\delta}$ of $\F_{\e,\delta}(\cdot)$, for any $\tau\in(0,\alpha)$ and some $\beta>0$, we have that
\begin{align*}
\|\D u_{\e,\delta}\|_{L^q(\Omega)}^q&\lesssim[V_{\mu,p}(\D u_{\e,\delta})]_{B^{\tau,2}_\infty(\Omega)}^2\\
&\lesssim \left(1+\|V_{\mu,p}(\D u_{\e,\delta})\|_{L^2(\Omega)}^2+\|g\|_{W^{2,q}(\Omega)}^q\right)^\beta,
\end{align*}
where the implicit constant is independent of $\e,\delta$.
\end{proposition}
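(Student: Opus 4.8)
The plan is to run the difference-quotient scheme used for Proposition~\ref{prop:auton}, with two changes: the sphere-optimised cut-off from Lemma~\ref{lem:schaffner} is replaced by a plain radial cut-off with $|\D\phi|\lesssim(\sigma-\rho)^{-1}$, and one must carry along the new term produced by the $x$-dependence of $F$ via \eqref{def:bounds3}. First I would fix $\e,\delta$, obtain by the direct method and the strict convexity from \eqref{ass:elliptic} the unique minimiser $v=v_{\e,\delta}\in W^{1,q}_0(\Omega)$ of $w\mapsto\F_{\e,\delta}(w+g)$ (solving the corresponding Euler--Lagrange equation, with $F_\e(x,z)=F(x,z)+\e|z|^q$ and $\tilde H_\delta$ the penalty as in Section~\ref{sec:estimate}), extend $F$ and $g$ to a ball $B(0,R)\Supset\Omega$ as in Section~\ref{sec:lipschitzDomains} with $\tilde g\in W^{1+\alpha,q}(\R^n)$ via \eqref{eq:Gextension}, and extend $v$ by $0$. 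For $x_0\in\Omega$, radii $\rho_0/2\le\rho<\sigma\le\rho_0<1$, a radial cut-off $\phi\equiv1$ on $B_\rho(x_0)$ supported in $B_\sigma(x_0)$, and translations $h\in C_{\rho_0/4}(\theta_0,\pmb n(x_0))$ (so $T_h\tilde v=\phi\tilde v_h+(1-\phi)\tilde v\in W^{1,q}_g(\Omega)$ by \eqref{eq:uniformCone}), testing the Euler--Lagrange equation with $\phi(\tilde v_h-\tilde v)$ and using \eqref{ass:elliptic}, the convexity of $\tilde H_\delta$, and Lemma~\ref{lem:Vfunctional} gives, as in \eqref{eq:lowerEstimateAuton}, the lower bound $\int_{B_\rho(x_0)}|V_{\mu,p}(\D\tilde v_h)-V_{\mu,p}(\D\tilde v)|^2\d x$.

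I would then bound this from above following the $A_1/A_2$ (and $B_1/B_2$) splitting of Proposition~\ref{prop:auton}. The $\tilde H_\delta$-contribution and the terms where $\D\phi$ falls on the competitor are treated verbatim by \eqref{ass:growth} and Young's inequality, producing $\e_1\int_{B_{\sigma+\rho_0/4}(x_0)}|\D\tilde v|^q\d x$ plus $C(\e_1)$ times integrals of $(|\D\phi|+|\D\phi|^{q^2/(q-1)})$ against a sum of $q$-th powers. The one genuinely new point occurs in the convexity term: after translating by $h$, comparing $F_\e(x-h,\cdot)$ with $F_\e(x,\cdot)$ now costs, by \eqref{def:bounds3}, a term $\lesssim|h|^\alpha\int_{B_{\sigma+\rho_0/4}(x_0)}(1+|\D\tilde v|^2+|\D\tilde g|^2)^{q/2}\d x$, and the mismatch between $\D\tilde g$ and its translate is handled by \eqref{ass:growth}, H\"older, and the Besov difference-quotient characterisation $\|\D\tilde g_h-\D\tilde g\|_{L^q}\lesssim|h|^\alpha\|g\|_{W^{1+\alpha,q}(\Omega)}$ (using $W^{1+\alpha,q}=B^{1+\alpha,q}_q\hookrightarrow B^{1+\alpha,q}_\infty$). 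Collecting terms, dividing by the appropriate power of $|h|$, taking the supremum over admissible $h$, applying a standard covering argument with \eqref{eq:besovlocalisation} --- and first using the $\e$-dependent estimate \eqref{eq:standardEstimate} to legitimise the manipulations by guaranteeing $V_{\mu,p}(\D\tilde v)\in B^{\tau,2}_\infty(\Omega)$ --- this yields, for every $\tau<\alpha$,
\begin{align*}
[V_{\mu,p}(\D\tilde v)]_{B^{\tau,2}_\infty(\Omega)}^2&\lesssim 1+\e_1\int_\Omega|\D\tilde v|^q\d x\\
&\quad+C(\e_1)\Big(1+\|V_{\mu,p}(\D\tilde v)\|_{L^2(\Omega)}^2+\|g\|_{W^{1+\alpha,q}(\Omega)}^q+\int_\Omega F_\e(x,\D\tilde v)\d x\Big).
\end{align*}

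To close the estimate I would absorb $\int_\Omega|\D\tilde v|^q\d x$ into the left-hand side: by the pointwise bound $|\D v|^q\lesssim1+|V_{\mu,p}(\D v)|^{2q/p}$, the Sobolev embedding of $B^{\tau,2}_\infty(\Omega)$ into every Lebesgue space with exponent below $\frac{2n}{n-2\tau}$ (Theorem~\ref{thm:embedding}) and the interpolation inequality \eqref{eq:interpolationInequality} between $L^2(\Omega)$ and that space, one bounds $\int_\Omega|\D v|^q\d x$ by $\e_1[V_{\mu,p}(\D v)]_{B^{\tau,2}_\infty(\Omega)}^2$ plus powers of $\|V_{\mu,p}(\D v)\|_{L^2(\Omega)}$ and $\|g\|_{W^{1+\alpha,q}(\Omega)}$, the Young exponent in this step being admissible precisely when $q<\frac{(n+\alpha)p}{n}$ (after letting $\tau$ approach its endpoint). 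Choosing $\e_1$ small, rearranging, and invoking Lemma~\ref{lem:lavrentiev} together with the minimality of $v$ to bound $\|V_{\mu,p}(\D v)\|_{L^2(\Omega)}^2$ and $\int_\Omega F_\e(x,\D v)\d x$ independently of $\e,\delta$ then gives the asserted bound; the lower inequality $\|\D u_{\e,\delta}\|_{L^q(\Omega)}^q\lesssim[V_{\mu,p}(\D u_{\e,\delta})]_{B^{\tau,2}_\infty(\Omega)}^2$ is again the same pointwise bound combined with this Sobolev embedding.

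The main obstacle is the exponent bookkeeping in the last step: one has to verify that the loss from using a plain cut-off instead of the sphere-optimised one of Lemma~\ref{lem:schaffner} is exactly compensated by the gain from the H\"older exponent $\alpha$, so that the power of $[V_{\mu,p}(\D v)]_{B^{\tau,2}_\infty}^2$ surviving on the right after the covering stays strictly below $1$, degenerating exactly at $q=\frac{(n+\alpha)p}{n}$; a secondary technical point is threading the new \eqref{def:bounds3}-contribution and the $W^{1+\alpha,q}$-norm of $g$ cleanly through the $B_1/B_2$ splitting, and --- as in the autonomous case --- one must be careful to establish a priori that $V_{\mu,p}(\D v_{\e,\delta})\in B^{\tau,2}_\infty(\Omega)$ before running the absorption.
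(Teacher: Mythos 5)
Your proposal is correct and follows essentially the same route as the paper's proof: the autonomous difference-quotient scheme with a plain cut-off, \eqref{def:bounds3} supplying the $|h|^\alpha$ gain in the $A_2$/translation term, the extension $\tilde g\in W^{1+\alpha,q}(\R^n)$ handling the boundary-datum mismatch, and a Sobolev-embedding-plus-interpolation-plus-Young absorption whose exponent is admissible exactly for $q<\frac{(n+\alpha)p}{n}$. The only (immaterial) difference is that you run the final interpolation in the $V_{\mu,p}$-scale, i.e.\ $B^{\tau,2}_\infty\hookrightarrow L^{2n/(n-2\tau)}$, whereas the paper first converts to $\D v\in B^{\alpha/p,p}_\infty\hookrightarrow W^{1,np/(n-\beta)}$ and interpolates there; the two computations coincide via $|V_{\mu,p}(z)|^2\sim|z|^p$.
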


\begin{proof}
We employ the notation of the proof of Proposition \ref{prop:auton}.
The proof follows the same outline as the autonomous case. However the trick of applying the Sobolev embedding on spheres cannot be applied and hence the proof is simpler. We state only the key steps.

We argue exactly as before up to \eqref{eq:A2}.
Here, using the convexity of $F_\e(\cdot)$ as before, we find that
\begin{align*}
|A_2|
 &\leq \int_{B_{3\rho_0}(x_0)}\tau_{-h}\phi F_\e(x-h,\D \tilde v+\D \tilde g_{-h})\\
& \quad+\phi(x)\big(F_\e(x-h,\D(\tilde g_{-h}+\tilde v))-F_\e(x-h,D(\tilde g-\tilde v))\big)\d x\\
&\quad+\int_{B_{2\rho_0}(x_0)}\phi(x)\big(F_\e(x-h,\D \tilde v+\D \tilde g)-F_\e(x,\D \tilde v+\D \tilde g)\big)\d x.
\end{align*}
Using \eqref{ass:growth} and \eqref{def:bounds3}, the regularity of $\phi$ and $g$ and \eqref{eq:Gextension}, we estimate each term in turn to conclude that
\begin{align*}
|A_2|&\lesssim |h|^\alpha\left(1+\|\D v\|_{L^q(\Omega)}^q+\|g\|_{W^{1+\alpha,q}(\Omega)}^q\right).
\end{align*}
Combining this with the estimates as obtained in the autonomous case, we find that, $x_0\in \R^n$,
\begin{align*}
&\quad \sup_{h\in C_{\rho_0}(\theta_0,\pmb n(x_0))}|h|^{-\alpha}\|V_{\mu,p}(\D v)-V_{\mu,p}(\D  T_h\tilde v_h)\|_{L^2(B_{\rho_0}(x_0))}^2\\
&\lesssim 1+\| \D v\|_{L^q(\Omega)}^q+\|g\|_{W^{1+\alpha,q}(\Omega)}^q.
\end{align*}
Recalling the definition of $T_h$, using the triangle inequality and regularity of $\tilde g$, we see that
\begin{align}\label{eq:basicBesov}
&\quad \sup_{h\in C_{\rho_0}(\theta_0,\pmb n(x_0))}|h|^{-\alpha}\|\D\tilde v_h-D\tilde v\|_{L^p(B_{\rho_0}(x_0))}^p\\
&\lesssim 1+\| \D v\|_{L^q(\Omega)}^q+\|g\|_{W^{1+\alpha,q}(\Omega)}^q.
\end{align}
Using the characterisation of Besov spaces (\ref{eq:besovcharacterisation}), we conclude that for every $x_0\in \R^n$,
\begin{align}
[\D v]_{\frac \alpha p,p,B_{\rho_0}(x_0)}^p\lesssim \left(1+\|\D v\|_{L^q(\Omega)}^q+\|g\|_{W^{1+\alpha,q}(\Omega)}^q\right).
\end{align}
Covering $\Omega$ by a finite number of balls of radius $\rho_0$, using (\ref{eq:besovlocalisation}) we conclude that
\begin{align}\label{eq:besovestimate2}
\|v\|_{B^{1+\frac \alpha p,p}_\infty(\Omega)}^p\lesssim \left(1+\|v\|_{W^{1,q}(\Omega)}^q+\|g\|_{W^{1+\alpha,q}(\Omega)}^q\right).
\end{align}
We recall that $B^{1+\frac \alpha p,p}_\infty(\Omega)$ embeds continuously into $W^{1,\frac{np}{n-\beta}}(\Omega)$ for any $\beta <\alpha$ by Theorem \ref{thm:embedding}. Hence we choose $\beta$ such that $q< \frac{p(\beta+n)}{n}$ and use \eqref{eq:interpolationInequality} with $\theta = \frac{np}{\beta}\left(\frac 1 p-\frac 1 q\right)$ to see that
\begin{align}\label{eq:interpolation}
\|Dv\|_{L^q(\Omega)}\leq \|Dv\|_{L^p(\Omega)}^{1-\theta} \|Dv\|_{L^{\frac{np}{n-\beta}}(\Omega)}^{\theta}.
\end{align}
As $q<\frac{(n+\beta)p}{n}$, it follows that $q\theta<p$. Using \eqref{eq:interpolation} in \eqref{eq:besovestimate2}, we find after using Young's inequality that
\begin{align*}
\|v\|_{W^{1,\frac{np}{n-\beta}}(\Omega)}^p\lesssim 1+\frac 1 2\|v\|_{W^{1,\frac{np}{n-\beta}}(\Omega)}^p+C(\theta)\|v\|_{W^{1,p}(\Omega)}^\frac{\theta q}{(\theta q-p)}+\|g\|_{W^{1+\alpha,q}(\Omega)}^q.
\end{align*}
Rearranging, as well as recalling \eqref{def:lower} and the regularity of $g$, we obtain the desired result.
\end{proof}

\remark{The results of this section can easily be extended to the case $1<p\leq 2$. We refer to \cite{Koch2020} for the details.}

\remark{Our results also imply an improved differentiability result. For this we return to \eqref{eq:besovestimate2} and take limits as $\e\to 0$. Due to the $W^{1,q}$-regularity of $u_{\e,\delta}$, the right-hand side remains bounded. Thus we obtain $B^{1+\frac \alpha p}_\infty(\Omega)$-regularity of $u_{\e,\delta}$.}

\section{Proof of Theorem \ref{thm:main}}\label{sec:mainproof}
We are now in a position to prove Theorem \ref{thm:main} and Proposition \ref{prop:exact}.
\begin{proof}[Proof of Theorem \ref{thm:main}]
For minimisers $u_{\e,\delta}$ of $\F_{\e,\delta}(\cdot)$, due to minimiality as well as Proposition \ref{prop:auton} and Proposition \ref{prop:nonauton}, we have the bound
\begin{align}\label{eq:bound}
\|u_{\e,\delta}\|_{W^{1,q}(\Omega)}\leq C<\infty
\end{align}
where $C$ is independent of $\e$, $\delta$. Next, we recall that, as $\delta\to 0$, we have $u_{\e,\delta}\to u_\e$ in $W^{1,q}(\Omega)$ where $u_\e$ is a minimiser of
\begin{align*}
\int_\Omega F(x,\D v)+\e |\D v|^q+\kappa(\psi-v)_+\d x
\end{align*}
in the class $W^{1,q}_g(\Omega)$. Using Proposition \ref{prop:exact}, we see that if $\kappa$ is chosen sufficiently large, independently of $\e$, $u_\e$ minimises
\begin{align*}
\int_\Omega F(x,\D v)+\e |\D v|^q\d x
\end{align*}
in the class $K^{*,\psi}_g(\Omega)$. Due to Lemma \ref{lem:lavrentiev}, $u_\e \to u$ in $W^{1,p}(\Omega)$, where $u$ is the relaxed minimiser of \eqref{obstacleProblem}. In particular, we can choose a suitable diagonal subsequence of $u_{\e,\delta}$, denoted $u_k$, with $u_k\to u$ as $k\to \infty$. Passing to the limit in \eqref{eq:bound} we deduce that $u\in W^{1,q}(\Omega)$. 
\end{proof}

\begin{proof}[Proof of Proposition \ref{prop:auton}]
Due to Lemma \ref{lem:nonLavrentiev} and Lemma \ref{lem:nonlavrentievNonAuton}, given a pointwise minimiser $u\in K^\psi_g(\Omega)$ of \eqref{obstacleProblem}, we can find $(u_j)\subset K^{*,\psi}_g(\Omega)$ and  $F^j\equiv F^j(x,z)\colon \Omega\times \R^{N\times n}\to \R$ satisfying the same assumptions as $F$, with constants in the various bounds that do not depend on $j$ such that
\begin{align}\label{eq:bound2}
u_j \to u \text{ in } W^{1,p}(\Omega) \quad\text{ and }\quad \int_\Omega F^j(x,\D u_j)\d x\to \F(u) \text{ as } j\to\infty.
\end{align} 

Since $F^j(x,z)$ satisfies the same assumptions as $F$, we can apply Proposition \ref{prop:auton} and Proposition \ref{prop:nonauton} to see that minimisers $u_{\e,\delta,j}$ of $$\F^j_{\e,\delta}:=\int_\Omega F^s(x,\D u)+\e|\D u|^q+\kappa H_\delta(\psi-u)\d x$$ satisfy
\begin{align}\label{eq:bound3}
\|u_{\e,\delta,j}\|_{W^{1,q}(\Omega)}\leq C<\infty
\end{align}
independent of $\e,\delta,j$. Thus, repeating the arguments of the proof of Theorem \ref{thm:main} we extract a subsequence $v_k=u_{\e_k,\delta_k,j_k}$ with $\e_k,\,\delta_k\to 0$ and $j_k\to\infty$ as $k\to\infty$ that converges weakly in $W^{1,q}(\Omega)$ to the minimiser $v$ and such that
\begin{align*}
\F^{j_k}_{\e_k,\delta_k}(v_k^j)\to \overline\F(v)
\end{align*}
as $k\to\infty$. Moreover, by weak lower semicontinuity of norms and \eqref{eq:bound3}, we have ${v\in W^{1,q}(\Omega)}$ and so $\overline \F(v)=\F(v)$. By weak semicontinuity and minimality of $u_{\e,\delta,j}$, we note that
\begin{align*}
\F(v) &= \lim_{k\to\infty} \F^{j_k}_{\e_k,\delta_k}(v_k)
\leq \lim_{k\to\infty} \F^{j_k}_{\e_k,\delta_k}(u_{j_k})
= \F(u).
\end{align*}
For a sufficiently  large choice of $\kappa$ we have $v\geq \psi$ by Proposition \ref{prop:exact}. Using the minimality of $u$ in the class $K^\psi_g(\Omega)$, we conclude that $\F(v)=\F(u)$. By convexity of $\F(\cdot)$, it follows that $v=u$, concluding the proof.
\end{proof}

\bibliographystyle{plain}
\bibliography{../Refs/bibtex/pqboundary}

\begin{thebibliography}{10}

\bibitem{Acerbi2003}
E.~Acerbi, G.~Bouchitt{\'{e}}, and I.~Fonseca.
\newblock {Relaxation of convex functionals: The gap problem}.
\newblock {\em Ann. l'Institut Henri Poincare Anal. Non Lineare},
  20(3):359--390, 2003.

\bibitem{Andersson2015}
J.~Andersson, E.~Lindgren, and H.~Shahgholian.
\newblock {Optimal regularity for the obstacle problem for the p-Laplacian}.
\newblock {\em J. Differ. Equ.}, 259(2167-2179), 2015.

\bibitem{Baroni2018}
P.~Baroni, M.~Colombo, and G.~Mingione.
\newblock {Regularity for general functionals with double phase}.
\newblock {\em Calc. Var. Partial Differ. Equations}, 57(62), 2018.

\bibitem{Bella2020}
P.~Bella and M.~Sch{\"{a}}ffner.
\newblock {On the regularity of minimizers for scalar integral functionals with
  (p,q)-growth}.
\newblock {\em Anal. \& PDE}, 13(7):2241--2257, 2020.

\bibitem{Bertazzoni2021}
G.~Bertazzoni and S.~Ricc\'{o}.
\newblock {Regularity for obstacle problems without structure conditions}.
\newblock {\em Nonlinear Anal. Real World Appl.}, 62:103353, 2021.

\bibitem{Bogelein2012}
V.~B{\"{o}}gelein and C.~Scheven.
\newblock {Higher integrability in parabolic obstacle problems}.
\newblock {\em Forum Math.}, 24(5):931--972, 2012.

\bibitem{Brezis1973}
H.~Brezis and D.~Kinderlehrer.
\newblock {The smoothness of solutions to nonlinear variational inequalities}.
\newblock {\em Indiana Univ. Math. J.}, 23:831--844, 1973.

\bibitem{Buttazo1995}
G.~Buttazo and M.~Belloni.
\newblock {A survey of old and recent results about the gap phenomenon in the
  Calculus of Variations}.
\newblock {\em Math. Appl.}, 331:1--27, 1995.

\bibitem{Buttazo1992}
G.~Buttazo and V.~Mizel.
\newblock {Interpretation of the Lavrentiev phenomenon by relaxation}.
\newblock {\em J. Funct. Anal.}, 2:434--460, 1992.

\bibitem{Byun2020}
S.-S. Byun, Y.~Cho, and J.~Oh.
\newblock {Nonlinear obstacle problems with double phase in the borderline
  case}.
\newblock {\em Math. Nachr.}, 293(651-669), 2020.

\bibitem{Byun2019}
S.-S. Byun, K.-A. Lee, J.~Oh, and J.~Park.
\newblock {Nonlinear gradient estimates for double phase elliptic problems with
  irregular double obstacles}.
\newblock {\em Proc. Am. Math. Soc.}, 147:3839--3854, 2019.

\bibitem{Caffarelli1977}
L.A. Caffarelli.
\newblock {The regularity of free boundaries in higher dimensions}.
\newblock {\em Acta Math.}, 139(155-184), 1977.

\bibitem{Caffarelli1998}
L.A. Caffarelli.
\newblock {The Obstacle Problem Revisited}.
\newblock {\em J. Fourier Anal. Appl.}, 4(4):383--402, 1998.

\bibitem{Caffarelli1980}
L.A. Caffarelli and D.~Kinderlehrer.
\newblock {Potential methods in variational inequalities}.
\newblock {\em J. Anal. Math.}, 37:285--295, 1980.

\bibitem{Carozza2013}
M.~Carozza, J.~Kristensen, and A.~{Passarelli di Napoli}.
\newblock {Regularity of minimisers of autonomous convex variational
  integrals}.
\newblock {\em Ann. della Scu. Norm. Sup. di Pisa}, 13(4), 2013.

\bibitem{Chlebicka2020a}
I.~Chlebicka and C.~de~Filippis.
\newblock {Removable sets in non-uniformly elliptic problems}.
\newblock {\em Ann. di Mat. Pura Appl.}, 199:619--649, 2020.

\bibitem{Chlebicka2020}
I.~Chlebicka, C.~de~Filippis, and L.~Koch.
\newblock {Boundary regularity for manifold constrained $p(x)$-Harmonic maps}.
\newblock {\em J. London Math. Soc.}, accepted, 2020.

\bibitem{DeFilippis2019a}
C.~de~Filippis.
\newblock {Partial regularity for manifold constrained p(x)-harmonic maps}.
\newblock {\em Calc. Var. Partial Differ. Equations}, 58, 2019.

\bibitem{DeFilippis2021a}
C.~de~Filippis.
\newblock {Regularity results for class of non-autonomous obstacle problems
  with (p,q)-growth}.
\newblock {\em J. Math. Anal. Appl.}, 501(1):123450, 2021.

\bibitem{DeFilippis2020}
C.~de~Filippis, L.~Koch, and J.~Kristensen.
\newblock {Regularity in relaxed convex problems}.
\newblock {\em {in Prep.}}, 2020.

\bibitem{DeFilippis2020d}
C.~de~Filippis and G.~Mingione.
\newblock {Manifold Constrained Non-uniformly Elliptic Problems}.
\newblock {\em J. Geom. Anal.}, 30:1661--1723, 2020.

\bibitem{deFilippis2020b}
C.~de~Filippis and G.~Mingione.
\newblock {Lipschitz bounds and nonautonomous integrals}.
\newblock {\em Arch. Ration. Mech. Anal.}, 2021.

\bibitem{DeFilippis2019}
C.~de~Filippis and G.~Mingione.
\newblock {A borderline case of Calder\'{o}n-Zygmund estimates for
  non-uniformly elliptic problems}.
\newblock {\em St. Petersbg. Math. J.}, 31:455--477, 2020.

\bibitem{Eleuteri2013}
M.~Eleuteri, P.~Harjulehto, and T.~Lukkari.
\newblock {Global regularity and stability of solutions to obstacle problems
  with nonstandard growth}.
\newblock {\em Rev. Matem{\'{a}}tica Complut.}, 26(1):147--181, 2013.

\bibitem{Eleuteri2018}
M.~Eleuteri and A.~{Passarelli di Napoli}.
\newblock {Higher differentiability for solutions to a class of obstacle
  problems}.
\newblock {\em Calc. Var. Partial Differ. Equ.}, 57(115), 2018.

\bibitem{Erhardt2016}
A.~Erhardt.
\newblock {Higher integrability for solutions to parabolic problems with
  irregular obstacles and nonstandard growth}.
\newblock {\em J. Math. Anal. Appl.}, 435:1772--1803, 2016.

\bibitem{Esposito2019}
A.~Esposito, F.~Leonetti, and P.V. Petricca.
\newblock {Absence of Lavrentiev gap for non-autonomous functionals with
  (p,q)-growth}.
\newblock {\em Adv. Nonlinear Anal.}, 8(1):73--78, 2019.

\bibitem{Esposito2004}
L.~Esposito, F.~Leonetti, and G.~Mingione.
\newblock {Sharp regularity for functionals with (p,q) growth}.
\newblock {\em J. Differ. Equations}, 204:5--55, 2004.

\bibitem{Evans1992}
L.C. Evans and R.F. Gariepy.
\newblock {\em {Measure theory and fine properties of functions}}.
\newblock CRC Press, Boca Raton, Fla., 1992.

\bibitem{Figalli2017}
A.~Figalli, B.~Krummel, and X.~Ros-Oton.
\newblock {On the regularity of the free boundary in the p-Laplacian obstacle
  problem}.
\newblock {\em J. Differ. Equations}, 3:1931--1945, 2017.

\bibitem{Figalli2020}
A.~Figalli, X.~Ros-Oton, and J.~Serra.
\newblock {Generic regularity of free boundaries for the obstacle problem}.
\newblock {\em Publ. Math. Inst. Hautes \`{E}tudes Sci}, 132:181--292, 2020.

\bibitem{Figalli2019}
A.~Figalli and J.~Serra.
\newblock {On the fine structure of the free boundary for the classical
  obstacle problem}.
\newblock {\em Inven. Math.}, 215(1):311--366, 2019.

\bibitem{Fonseca1997}
I.~Fonseca and J.~Mal{\'{y}}.
\newblock {Relaxation of multiple integrals below the growth exponent for the
  energy density}.
\newblock {\em Ann. l'Institut Henri Poincare Anal. Non Lineare}, 14:309--338,
  1997.

\bibitem{Fonseca2004}
I.~Fonseca, J.~Mal{\'{y}}, and G.~Mingione.
\newblock {Scalar minimizers with fractal singular sets}.
\newblock {\em Arch. Ration. Mech. Anal.}, 172:295--307, 2004.

\bibitem{Foralli2021}
N.~Foralli and G.~Giliberti.
\newblock {Higher differentiability of solutions for a class of obstacle
  problems with variable exponents}.
\newblock {\em arXiv Prepr. arXiv210704336}, 2021.

\bibitem{Foss2001}
M.~Foss.
\newblock {\em {On Lavrentiev's phenomenon}}.
\newblock PhD thesis, Carnegie Mellon University, 2001.

\bibitem{Foss2003}
M.~Foss.
\newblock {The Lavrentiev gaph phenomenon in nonlinear elasticity}.
\newblock {\em Arch. Ration. Mech. Anal.}, 167:336--365, 2003.

\bibitem{Fu2015}
Y.~Fu and Y.~Shan.
\newblock {Removable sets for H\"older continuous solutions of elliptic
  equations involving variable exponent}.
\newblock {\em J. Math. Anal. Appl.}, 424:1296--1322, 2015.

\bibitem{Fuchs2000a}
M.~Fuchs and G.~Mingione.
\newblock {Full $C^{1,\alpha}$-regularity for free and constrained local
  minimizers of elliptic variational integrals with nearly linear growth}.
\newblock {\em Manuscripta Math.}, 102:227--250, 2000.

\bibitem{Gavioli2019a}
C.~Gavioli.
\newblock {A priori estimates for solutions to a class of obstacle problems
  under p,q-growth conditions}.
\newblock {\em J. Elliptic Parabol. Equations}, 5:325--347, 2019.

\bibitem{Gavioli2019}
C.~Gavioli.
\newblock {Higher differentiability of solutions to a class of obstacle
  problems under non-standard growth conditions}.
\newblock {\em Forum Math.}, 31(6):1501--1516, 2019.

\bibitem{Giaquinta1983}
M.~Giaquinta and E.~Giusti.
\newblock {Differentiability of minima of nondifferentiable functionals}.
\newblock {\em Inven. Math.}, 72:285--298, 1983.

\bibitem{Giusti2003}
E.~Giusti.
\newblock {\em {Direct Methods in the Calculus of Variations}}.
\newblock WORLD SCIENTIFIC, 2003.

\bibitem{Gmeineder2021}
F.~Gmeineder.
\newblock {Partial regularity for symmetric quasiconvex functionals on BD}.
\newblock {\em J. Math\'{e}matiques Pures Appliqu\'{e}es}, 145:83--129, 2021.

\bibitem{Grimaldi2021}
A.G. Grimaldi and E.~Ipocoana.
\newblock {Higher fractional differentiability for solutions to a class of
  obstacle problems with non-standard growth conditions}.
\newblock {\em arXiv Prepr. arXiv2109.01584}, 2021.

\bibitem{Grisvard1992}
P.~Grisvard.
\newblock {\em {Elliptic Problems in Nonsmooth Domains}}, volume~22.
\newblock Society for Industrial and Applied Mathematics, University City,
  Philadelphia, 1992.

\bibitem{Guerra2021b}
A.~Guerra and J.~Kristensen.
\newblock {On the Lavrentiev phenomenon for quasiconvex integrands}.
\newblock {\em {in Prep.}}, 2021.

\bibitem{Heinonen1993}
J~Heinonen, T~Kilpel{\"{a}}inen, and O~Martio.
\newblock {\em {Nonlinear Potential Theory of Degenerate Elliptic Equations}}.
\newblock Dover Publications, Mineola, 1993.

\bibitem{Karppinen2021}
A.~Karppinen.
\newblock {Global continuity and higher integrability of a minimizer of an
  obstacle problem under generalized Orlicz growth conditions}.
\newblock {\em Manuscripta Math.}, 164:67--94, 2021.

\bibitem{Kilpelainen2000}
T.~Kilpel{\"{a}}inen and X.~Zhong.
\newblock {Removable sets for continuous solutions of quasilinear elliptic
  equations}.
\newblock {\em Proc. Am. Math. Soc.}, 130, 2000.

\bibitem{Kinderlehrer1980}
D.~Kinderlehrer and G.~Stampacchia.
\newblock {\em {An Introduction to Variational Inequalities and Their
  Applications}}.
\newblock Academic Press, Cambridge, 1980.

\bibitem{Koch2020}
L.~Koch.
\newblock {Global higher differentiability for minimisers of convex functionals
  with (p,q)-growth}.
\newblock {\em Calc. Var. Partial Differ. Equ.}, 60(63), 2021.

\bibitem{Lavrentiev1926}
M.~Lavrentiev.
\newblock {Sur quelques probl{\`{e}}me du calcul des variations}.
\newblock {\em Ann. di Mat. Pura Appl.}, 4:7--28, 1926.

\bibitem{Liang2021}
S.~Liang and S.~Zheng.
\newblock {Lorentz estimates to nonlinear elliptic obstacle problems of
  p(x)-growth in Reifenberg domains}.
\newblock {\em J. Math. Anal. Appl.}, page 123924, 2021.

\bibitem{Marcellini1989}
P.~Marcellini.
\newblock {Regularity of minimizers of integrals of the calculus of variations
  with non-standard growth conditions}.
\newblock {\em Arch. Ration. Mech. Anal.}, 105:267--284, 1989.

\bibitem{Marcellini1991}
P.~Marcellini.
\newblock {Regularity and existence of solutions of elliptic equations with
  p,q-growth conditions}.
\newblock {\em J. Differ. Equations}, 90:1--30, 1991.

\bibitem{Mingione2006}
G.~Mingione.
\newblock {Regularity of minima: an invitation to the dark side of the calculus
  of variations}.
\newblock {\em Appl. Math}, 51(4):355--426, 2006.

\bibitem{Mingione2021}
G.~Mingione and V.~D. R\u{a}dulescu.
\newblock {Recent developments in problems with nonstandard growth and
  nonuniform ellipticity}.
\newblock {\em J. Math. Anal. Appl.}, 2021.

\bibitem{Ok2016}
J.~Ok.
\newblock {Regularity results for a class of obstacle problems with nonstandard
  growth}.
\newblock {\em J. Math. Anal. Appl.}, 444(2):957--979, 2016.

\bibitem{Rychkov1999}
V.S. Rychkov.
\newblock {On restrictions and extensions of the Besov and Triebel-Lezorkin
  spaces with respect to Lipschitz domains}.
\newblock {\em J. London Math. Soc.}, 60:237--257, 1999.

\bibitem{Savare1998}
G.~Savar{\'{e}}.
\newblock {Regularity Results for Elliptic Equations in Lipschitz Domains}.
\newblock {\em J. Funct. Ana.}, 152(1):176--201, 1998.

\bibitem{Schaeffer2018}
H.~Schaeffer.
\newblock {A penalty method for some nonlinear variational obstacle problems}.
\newblock {\em Commun. Math. Sci.}, 16(7):1757--1777, 2018.

\bibitem{Schaeffner2020}
M.~Sch{\"{a}}ffner.
\newblock {Higher Integrability for variational integrals with non-standard
  growth}.
\newblock {\em arXiv Prepr. arXiv2005.05115}, 2020.

\bibitem{Tran2015}
G.~Tran, H.~Schaeffer, W.M. Feldman, and S.J. Osher.
\newblock {An $L^1$ penalty method for general obstacle problems}.
\newblock {\em SIAM J. Appl. Math.}, 75(4):1424--1444, 2015.

\bibitem{Triebel1978}
H.~Triebel.
\newblock {\em {Interpolation Theory, Function Spaces, Differential
  Operators}}.
\newblock VEB Deutscher Verlag der Wissenschaften, Berlin, 1978.

\bibitem{Triebel2002}
H.~Triebel.
\newblock {Function spaces in Lipschitz domains and on Lipschitz manifolds.
  Characteristic functions as pointwise multipliers}.
\newblock {\em Rev. Math\'{e}matica Complut.}, 15(2):475--524, 2002.

\bibitem{Zhang2021}
X.~Zhang and Z.~Shenzhou.
\newblock {Besov regularity for the gradients of solutions to non-uniformly
  elliptic obstacle problems}.
\newblock {\em J. Math. Anal. Appl.}, 504(2):125402, 2021.

\bibitem{Zhikov1987}
V.~V. Zhikov.
\newblock {Averaging of functionals of the calculus of variations and
  elasticity theory}.
\newblock {\em Izv. Math.}, 29(1):33--66, 1987.

\bibitem{Zhikov1993}
V.V. Zhikov.
\newblock {Lavrentiev phenomenon and homogenization for some variational
  problems}.
\newblock {\em C. R. Acad. Sci. Paris S\'{e}r. Mat.}, 50:674--710, 1993.

\bibitem{Zhikov1995}
V.V. Zhikov.
\newblock {On Lavrentiev's Phenomenon}.
\newblock {\em Russ. J. Math. Phys.}, 3(2):249--269, 1995.

\end{thebibliography}
\end{document}